\pgfplotsset{compat=1.18}
\newtheorem{theorem}{Theorem}[section]
\newtheorem{lemma}[theorem]{Lemma}
\newtheorem{proposition}[theorem]{Proposition}
\newtheorem*{theorem*}{Theorem}
\newtheorem*{conjecture*}{Conjecture}
\newtheorem*{claim*}{Claim}
\theoremstyle{definition}
\newtheorem{example}[theorem]{Example}
\newtheorem*{goal*}{Goal}
\numberwithin{equation}{section}
\numberwithin{theorem}{section}
\theoremstyle{remark}
\newtheorem{remark}[theorem]{Remark}
\def\supp{{\rm supp}}
\def\sp{{\rm span}}
\title{Mizohata--Takeuchi inequalities for orthonormal systems}
\author[Bennett]{Jonathan Bennett}
\address[Jonathan Bennett]{School of Mathematics, The Watson Building, University of Birmingham, Edgbaston, Birmingham, B15 2TT, England.}
\email{J.Bennett@bham.ac.uk}
\author[Bez]{Neal Bez}
\address[Neal Bez]{Graduate School of Mathematics, Nagoya University, Nagoya 464-0862, Japan and Graduate School of Science and Engineering, Saitama University, Saitama 338-8570, Japan}
\curraddr{Graduate School of Mathematical Sciences, The University of Tokyo, 3-8-1 Komaba, Meguro-ku, Tokyo 153-8914, Japan}
\email{bez@ms.u-tokyo.ac.jp}
\author[Guti\'errez]{Susana Guti\'errez}
\address[Susana Guti\'errez]{School of Mathematics, The Watson Building, University of Birmingham, Edgbaston, Birmingham, B15 2TT, England.}
\email{S.Gutierrez@bham.ac.uk}
\author[Nakamura]{Shohei Nakamura}
\address[Shohei Nakamura]{School of Mathematics, The Watson Building, University of Birmingham, Edgbaston, Birmingham, B15 2TT, England.}
\email{s.nakamura@bham.ac.uk}
\author[Oliveira]{Itamar Oliveira}
\address[Itamar Oliveira]{School of Mathematics, The Watson Building, University of Birmingham, Edgbaston, Birmingham, B15 2TT, England.}
\email{i.oliveira@bham.ac.uk, oliveira.itamar.w@gmail.com}
\begin{document}

\date{\today}
\keywords{Orthonormal systems, Strichartz estimates, Fourier extension operators, X-ray transforms, weighted inequalities, Wigner distributions}

\subjclass[2020]{42B10, 44A12}
\begin{abstract}
We establish some weighted $L^2$ inequalities for Fourier extension operators in the setting of orthonormal systems. In the process we develop a direct approach to such inequalities based on generalised Wigner distributions, complementing the Schatten space approach that is prevalent in the wider context of estimates for such orthonormal systems. 
Our results are set within a broader family of tentatively suggested ($L^p$) inequalities of Mizohata--Takeuchi type. For $p$ an even integer we see that such weighted inequalities may be recast as questions of co-positivity of tensor forms, and for $p\leq 1$ we provide some evidence that they may hold in reverse provided the orthonormal sequence is complete.
\end{abstract}

\maketitle
\tableofcontents
\section{Introduction}
\subsection{Mizohata--Takeuchi inequalities}
A basic and enduring objective of harmonic analysis is to quantify or describe the effects of interference in wave fields arising from superpositions of plane waves propagating in different directions in euclidean space. Mathematically, such a superposition is just the Fourier transform of a measure $\mu$ on $\mathbb{R}^n$,
$$
\widehat{\mu}(x):=\int e^{-2\pi ix\cdot\xi}\mathrm{d}\mu(\xi),
$$
with $x$ being a point in $n$-dimensional space.
Estimating the effects of interference, at least from the point of view of size, amounts to estimating the quantity $|\widehat{\mu}(x)|^2$ (sometimes referred to as the intensity of the wave field) in suitable normed spaces. An important example occurs when the measure $\mu$ is supported on the unit sphere $\mathbb{S}^{n-1}$, where it corresponds to plane waves of the same wavelength (monochromatic waves). This gives rise to what is referred to as the \textit{spherical extension operator}
$$
\widehat{g\mathrm{d}\sigma}(x):=\int_{\mathbb{S}^{n-1}}g(\xi)e^{-2\pi ix\cdot\xi}\mathrm{d}\sigma(\xi),
$$
taking an (integrable) input $g:\mathbb{S}^{n-1}\rightarrow\mathbb{C}$ (an amplitude) to a function on $\mathbb{R}^n$; here $\mathrm{d}\sigma$ denotes surface measure on the unit sphere $\mathbb{S}^{n-1}$ of $\mathbb{R}^n$. That this is a fundamental object of study in harmonic analysis is evident from the observation that its adjoint $$f\longmapsto\widehat{f}\Bigl|_{\mathbb{S}^{n-1}}$$ restricts the Fourier transform of a function on $\mathbb{R}^n$ to the unit sphere (a Fourier restriction operator). Evidently the study of extension operators associated with submanifolds $S$ of $\mathbb{R}^n$ (such as the sphere), referred to in harmonic analysis as \textit{Fourier restriction theory}, is of importance wherever wave-like phenomena arise, from optics and quantum mechanics to (perhaps more surprisingly) combinatorial geometry and number theory; see for example \cite{Stovall}. 

Estimating extension operators in the scale of the Lebesgue $L^p$ spaces amounts to the longstanding \textit{restriction conjecture} of Stein, which is only fully resolved when $n=2$, and interestingly identifies the curvature (such as the gaussian curvature) of $S$ as decisive in the theory; see \cite{HZ} and the references there for some recent progress in higher dimensions. However, while $L^p$ norms (with respect to some fixed measure) turn out to be effective global measures of size, they do not reveal any geometric features of the intensity $|\widehat{g\mathrm{d}\sigma}|^2$. That one might expect interesting geometric features is the basic premise of geometric optics: rays (lines) should be in clear evidence in such optical wave fields. A natural way to capture this is via general \textit{weighted} integrals of $|\widehat{g\mathrm{d}\sigma}|^2$, that is, expressions of the form
$$
\int_{\mathbb{R}^n}|\widehat{g\mathrm{d}\sigma}(x)|^2w(x)\mathrm{d}x,
$$
where $w$ is a general, usually nonnegative, locally integrable\footnote{In some of our arguments it will be convenient to assume that $w$ lies in an appropriate class of test functions, such as the Schwartz class.} function on $\mathbb{R}^n$. A good illustration of this is found in the high-frequency limiting identity
\begin{equation}\label{ah}
\limsup_{R\rightarrow\infty}R^{n-1}\int_{\mathbb{R}^{n}}|\widehat{g\mathrm{d}\sigma}(Rx)|^2w(x)\mathrm{d}x=\frac{1}{(2\pi)^{n+1}}\int_{\mathbb{S}^{n-1}}|g(\xi)|^2\left(\int_{\mathbb{R}}w(t\xi)\mathrm{d}t\right)\mathrm{d}\sigma(\xi),
\end{equation}
established (for compactly supported $w$) by Agmon and H\"ormander in \cite{AH}; see \cite{BRV}.
For finite frequencies a related problem, usually attributed to Mizohata and Takeuchi, is to determine the exponents $\alpha\geq 0$ for which the inequality
\begin{equation}\label{MTS}
\int_{B(0,R)}|\widehat{g\mathrm{d}\sigma}(x)|^2w(x)\mathrm{d}x\lesssim R^\alpha\|g\|_{L^2(\mathbb{S}^{n-1})}^2\|Xw\|_{L^\infty(\{(\omega,v):\omega\in\supp(g), v\in\langle\omega\rangle^\perp\})}
\end{equation}
holds, or possibly its weaker ``undirected" form where the $L^\infty$ norm on the right-hand side is unrestricted; see \cite{BRV}. Here 
\begin{equation}\label{ex}
Xw(\omega,v):=\int_{-\infty}^\infty w(v+t\omega)\mathrm{d}t
\end{equation}
denotes the X-ray transform, computing the integral of $w$ along the line with direction $\omega\in\mathbb{S}^{n-1}$ passing through the point $v\in\langle\omega\rangle^\perp$. We refer to Carbery, Iliopoulou and Wang \cite{CIW} for the current smallest $\alpha$ for which \eqref{MTS} is known; see also \cite{DGOWWZ, DORZ, DZ, DLWZ, Ortiz, Shayya, Wu, Mul, GWZ, BN, BNS, BGNO} for a number of recent results in weighted restriction theory, and the references there. An important feature of this statement, and most others in weighted harmonic analysis, is that the right-hand side is expressed in purely geometric (or integro-geometric) terms.
Very recently Cairo \cite{Cairo} showed that the Mizohata--Takeuchi inequality \eqref{MTS} fails in its conjectured global ($\alpha=0$) form 
\begin{equation}\label{MTSglobal}
\int_{\mathbb{R}^n}|\widehat{g\mathrm{d}\sigma}(x)|^2w(x)\mathrm{d}x\lesssim\|g\|_{L^2(\mathbb{S}^{n-1})}^2\|Xw\|_{L^\infty(\{(\omega,v):\omega\in\supp(g), v\in\langle\omega\rangle^\perp\})},
\end{equation}
even without the restriction on the X-ray norm on the right-hand side, despite being true in some notable situations (for example for radial weights \cite{BRV}, \cite{CS}, \cite{BBC3}; see also \cite{BCSV}).
That \eqref{MTSglobal} might seem plausible stems from the example of a \textit{single wavepacket}, that is, the special case where $g$ is a (modulated) characteristic function of a spherical cap. In this situation $|\widehat{g\mathrm{d}\sigma}|^2$ is easily seen to be essentially supported on a tubular region of $\mathbb{R}^n$, at which point the left-hand side of \eqref{MTSglobal} presents something akin to a line integral of the weight $w$. A further key observation is that the tube generated has axis in the direction of the normal to $\mathbb{S}^{n-1}$ at some point in the associated cap -- a feature that explains the restriction on the X-ray transform appearing on the right-hand side of \eqref{MTSglobal}.

While the Mizohata--Takeuchi conjecture in its global form \eqref{MTSglobal} is now known to be false, there are some closely related formulations that turn out to be true. One simple example is
\begin{equation}\label{MTSglobalsob}
\int_{\mathbb{R}^n}|\widehat{g\mathrm{d}\sigma}(x)|^2w(x)\mathrm{d}x\lesssim\|g\|_{L^2(\mathbb{S}^{n-1})}^2\sup_{\omega\in\supp(g)^\diamond}\|Xw(\omega,\cdot)\|_{\dot{H}^s(\langle\omega\rangle^\perp)},
\end{equation}
which holds for all values of the $L^2$-Sobolev exponent $s<(n-1)/2$. Here, for a subset $E$ of the sphere, $E^\diamond$ denotes the set of (great-circular, or geodesic) midpoints of pairs of points from $E$, and we note that the threshold $s=(n-1)/2$ is critical from the point of view of Sobolev embeddings of $\dot{H}^s(\langle\omega\rangle^\perp)$ into $L^\infty(\langle\omega\rangle^\perp)$; see \cite{BGNO}. We remark that while best-possible in this sense, the inequality \eqref{MTSglobalsob} has a right-hand side that is not entirely ``geometrically-defined" -- an important feature that one would like to retain where possible.
\subsection{Mizohata--Takeuchi inequalities and orthonormal systems}
The main purpose of this paper is to identify a new setting in which a global Mizohata--Takeuchi-type inequality (with geometric features similar to those of \eqref{MTSglobal}) may be seen to hold: that of \textit{orthonormal systems} of inputs $(g_j)$. 
The usual formalism, developed in the setting of extension operators acting on $L^2$ by Frank and Sabin \cite{FS} (see also earlier work of Frank, Lewin, Lieb and Seiringer \cite{FLLS} in the setting of Strichartz estimates for the Schr\"odinger equation with data in $L^2$), is to look for improvements to a single-function estimate by replacing the ``density" $|\widehat{g\mathrm{d}\sigma}|^2$ by the linear combination of densities $$\sum_j\lambda_j|\widehat{g_j\mathrm{d}\sigma}|^2$$ for an orthonormal sequence of inputs $(g_j)$. As a notable example, the classical Stein--Tomas inequality for the sphere,
\begin{equation}\label{stom}
\|\widehat{g\mathrm{d}\sigma}\|_{L^{\frac{2(n+1)}{n-1}}(\mathbb{R}^n)}\lesssim\|g\|_{L^2(\mathbb{S}^{n-1})},
\end{equation}
was shown to improve to
\begin{equation}\label{ONST_endpoint}
\Bigl\|\sum_j\lambda_j|\widehat{g_j\mathrm{d}\sigma}|^2\Bigr\|_{L^{\frac{n+1}{n-1}}(\mathbb{R}^n)}\lesssim \|(\lambda_j)\|_{\ell^{\frac{n+1}{n}}}.
\end{equation}
Evidently the improvement here is captured by the summability exponent $\frac{n+1}{n}>1$ appearing on the right-hand side, beating what follows trivially from the classical inequality \eqref{stom} and the triangle inequality.
In the setting of the Mizohata--Takeuchi inequalities \eqref{MTS} one might wonder whether there could be similar extensions to orthonormal systems, such as
\begin{equation}\label{MTSsys}
\int_{B(0,R)}\sum_j\lambda_j|\widehat{g_j\mathrm{d}\sigma}(x)|^2w(x)\mathrm{d}x\lesssim R^\alpha\|(\lambda_j)\|_{\ell^\beta}\|Xw\|_{L^\infty(\{(\omega,v):\omega\in K, v\in\langle\omega\rangle^\perp\})},
\end{equation}
for some $\beta>1$; here $$
K:=\bigcup_j\supp(g_j).
$$
However the inequality \eqref{MTSsys}, which  may be reformulated by $\ell^p$ duality as 
\begin{equation}\label{MTSsysdual}
\Bigl\|\left(\int_{B(0,R)}|\widehat{g_j\mathrm{d}\sigma}|^2w\right)\Bigr\|_{\ell^p}\lesssim R^{\alpha }\|Xw\|_{L^\infty(\{(\omega,v):\omega\in K, v\in\langle\omega\rangle^\perp\})},
\end{equation}
where $p=\beta'$, appears to be at odds with the usual wavepacket heuristics described earlier in the setting of \eqref{MTS}. Notice that for an orthonormal sequence of wavepackets $(g_j)$, the tubular regions that emerge are either essentially disjoint, or have distinct directions belonging to a separated set of points in the combined support set $K$, and so the left-hand side of \eqref{MTSsysdual} presents something akin to a discretisation of the $L^p$ norm of $Xw$ on $K$, rather that the $L^\infty$ norm. This suggests that for $\beta>1$ the inequality \eqref{MTSsys} is more naturally replaced with 
\begin{equation}\label{MTSsysp}
\int_{B(0,R)}\sum_j\lambda_j|\widehat{g_j\mathrm{d}\sigma}(x)|^2w(x)\mathrm{d}x\lesssim R^\alpha\|(\lambda_j)\|_{\ell^\beta}\|Xw\|_{L^p(\{(\omega,v):\omega\in K, v\in\langle\omega\rangle^\perp\})},
\end{equation} 
or equivalently
\begin{equation*}
\sum_j\left(\int_{B(0,R)}|\widehat{g_j\mathrm{d}\sigma}|^2w\right)^p\lesssim R^{\alpha p}\|Xw\|_{L^p(\{(\omega,v):\omega\in K, v\in \langle \omega\rangle^\perp\})}^p,
\end{equation*}
especially if one is considering many inputs $g_j$.
Evidently this more liberal perspective on orthonormalisation, in contrast with that in, say, \cite{FLLS} and \cite{FS}, does not give rise to inequalities that are manifestly stronger than their single-input forms. However, we note that if \eqref{MTSsysp} were to hold for one value of $\alpha$, then \eqref{MTSsys} (and hence \eqref{MTS}) would hold for a certain other since 
$$\|Xw\|_{L^p(\{(\omega,v):\omega\in K, v\in\langle\omega\rangle^\perp\})}\lesssim R^{\frac{n-1}{p}}\|Xw\|_{L^\infty(\{(\omega,v):\omega\in K, v\in\langle\omega\rangle^\perp\})}$$ 
whenever $w$ is supported in $B(0,R)$.
We further clarify these heuristics shortly and present further contextual remarks later in this introductory section.

We begin by presenting our results in the case of the extension operator for the sphere, before moving on to other submanifolds and in particular the paraboloid, where our results are naturally presented as Strichartz estimates for the Schr\"odinger equation. Our focus will be on global (scale-free and scale-invariant) estimates throughout.
\vspace{2mm}

\noindent \textbf{The sphere.}
As we have already discussed, in the context of orthonormal sequences $(g_j)$ of functions on $\mathbb{S}^{n-1}$, the analogue of the single wavepacket example described in the context of \eqref{MTSglobal} is of course an orthonormal sequence of wavepackets, where each $g_j$ is a modulated characteristic function of a cap, suitably normalised. As we reason above, this suggests that a natural analogue of \eqref{MTSglobal} for an orthonormal sequence $(g_j)$ might take the form
\begin{equation}\label{Conj:onmtsphereintrodual}
\int_{\mathbb{R}^n}\sum_j\lambda_j|\widehat{g_j\mathrm{d}\sigma}|^2w\lesssim\|(\lambda_j)\|_{\ell^{p'}}\|Xw\|_{L^p(\{(\omega,v):\omega\in K, v\in \langle \omega\rangle^\perp\})},
\end{equation}
or equivalently,
\begin{equation}\label{p}
\sum_j\left(\int_{\mathbb{R}^n}|\widehat{g_j\mathrm{d}\sigma}|^2w\right)^p\lesssim \|Xw\|_{L^p(\{(\omega,v):\omega\in K, v\in \langle \omega\rangle^\perp\})}^p
\end{equation}
for some range of $1\leq p<\infty$; we omit $p=\infty$ as \eqref{p} may then be seen to be equivalent to the false global Mizohata--Takeuchi conjecture \eqref{MTSglobal}. 
In the particular case where the sequence $(g_j)$ consists of wavepackets, their orthonormality  means that the counting measure on the left-hand side of \eqref{p} corresponds to a separated net of points in phase space (the tangent bundle of $\mathbb{S}^{n-1}$), connecting the $\ell^p$ norm on the left of \eqref{p} with the $L^p$ norm on the right. 
Turning to a general orthonormal sequence $(g_j)$, since $$\|Xw\|_{L^1(\{(\omega,v):\omega\in K, v\in \langle \omega\rangle^\perp\})}=|K|\|w\|_1,$$ the case $p=1$ of \eqref{p} reduces to the uniform pointwise estimate
\begin{equation}\label{preFS}
\sum_j|\widehat{g_j\mathrm{d}\sigma}|^2\leq |K|,
\end{equation}
which follows immediately by an application of Bessel's inequality. The focus of this work is therefore on what might be true for $p>1$, and in particular the case $p=2$.
Our main theorem in the setting of the sphere is the following:
\begin{theorem}\label{mainsphereintro}
Suppose $n\geq 3$. If $(g_j)$ is an orthonormal sequence in $L^2(\mathbb{S}^{n-1})$ then
\begin{equation}\label{onmtgenSn-1intro}
\sum_j\left(\int_{\mathbb{R}^n}|\widehat{g_j\mathrm{d}\sigma}|^2w\right)^2\leq \|Xw\|_{L^2(\{(\omega,v):\omega\in K^\diamond, v\in \langle \omega\rangle^\perp\})}^2
\end{equation}
for all signed weight functions $w$.
If $n=2$ the same is true (up to a constant factor) provided $K$ is contained in some (fixed) closed set that has no antipodal points. 
    \end{theorem}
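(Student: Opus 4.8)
The plan is to expand the densities $|\widehat{g_j\mathrm{d}\sigma}|^2$ and pair them with $w$ directly --- the ``generalised Wigner'' approach --- which reduces the whole statement to Bessel's inequality in an $L^2$ of pairs, followed by a Fourier slice identity and one change of variables on $\mathbb{S}^{n-1}\times\mathbb{S}^{n-1}$. Taking $w$ to be a Schwartz function (as the footnote permits), Fubini gives
\[
\int_{\mathbb{R}^n}|\widehat{g_j\mathrm{d}\sigma}(x)|^2 w(x)\,\mathrm{d}x=\int_{\mathbb{S}^{n-1}}\int_{\mathbb{S}^{n-1}}\widehat{w}(\xi-\eta)\,g_j(\xi)\overline{g_j(\eta)}\,\mathrm{d}\sigma(\xi)\mathrm{d}\sigma(\eta).
\]
Write $\Phi(\xi,\eta):=\widehat{w}(\xi-\eta)$ and $H_j(\xi,\eta):=\overline{g_j(\xi)}\,g_j(\eta)$, both in $L^2(K\times K)$ where $K=\bigcup_j\supp(g_j)$. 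Then $\int|\widehat{g_j\mathrm{d}\sigma}|^2 w=\langle\Phi,H_j\rangle_{L^2(K\times K)}$, a real number, and the orthonormality of $(g_j)$ gives $\langle H_i,H_j\rangle_{L^2(K\times K)}=|\langle g_i,g_j\rangle|^2=\delta_{ij}$. Hence $(H_j)$ is an orthonormal system in $L^2(K\times K)$, and Bessel's inequality yields
\[
\sum_j\Bigl(\int_{\mathbb{R}^n}|\widehat{g_j\mathrm{d}\sigma}|^2 w\Bigr)^{\!2}=\sum_j|\langle\Phi,H_j\rangle|^2\leq\|\Phi\|_{L^2(K\times K)}^2=\int_K\int_K|\widehat{w}(\xi-\eta)|^2\,\mathrm{d}\sigma(\xi)\mathrm{d}\sigma(\eta).
\]
(Equivalently, this is Bessel's inequality for the Hilbert--Schmidt operator with kernel $\Phi$ against the rank-one projections $|g_j\rangle\langle g_j|$ --- the Wigner/direct form of the Schatten-duality computation; note that nowhere is positivity of $w$ used, so signed $w$ is fine.)

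It remains to bound $\int_K\int_K|\widehat{w}(\xi-\eta)|^2\,\mathrm{d}\sigma(\xi)\mathrm{d}\sigma(\eta)$ by $\|Xw\|_{L^2(\{(\omega,v):\omega\in K^\diamond,\,v\in\langle\omega\rangle^\perp\})}^2$. Two elementary geometric facts drive this. First, since $|\xi|=|\eta|=1$ the vector $\xi-\eta$ is orthogonal to $\xi+\eta$, and $\omega:=\tfrac{\xi+\eta}{|\xi+\eta|}$ is exactly the geodesic midpoint of $\xi$ and $\eta$; hence $\xi-\eta\in\langle\omega\rangle^\perp$ and, by the Fourier projection--slice theorem, $\widehat{w}(\xi-\eta)=\widehat{Xw(\omega,\cdot)}(\xi-\eta)$, the $(n-1)$-dimensional Fourier transform of $Xw(\omega,\cdot)$ on $\langle\omega\rangle^\perp$. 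Second, the map $(\xi,\eta)\mapsto(\omega,\zeta):=\bigl(\tfrac{\xi+\eta}{|\xi+\eta|},\,\xi-\eta\bigr)$ is, off the null set $\{\xi=-\eta\}$, a diffeomorphism onto $\{(\omega,\zeta):\omega\in\mathbb{S}^{n-1},\ \zeta\in\langle\omega\rangle^\perp,\ |\zeta|<2\}$ with inverse $(\omega,\zeta)\mapsto\bigl(\tfrac12(s\omega+\zeta),\tfrac12(s\omega-\zeta)\bigr)$, $s=\sqrt{4-|\zeta|^2}$, and a Jacobian computation gives
\[
\mathrm{d}\sigma(\xi)\,\mathrm{d}\sigma(\eta)=\Bigl(\tfrac{|\xi+\eta|}{2}\Bigr)^{n-3}\mathrm{d}\sigma(\omega)\,\mathrm{d}\zeta .
\]
Since $\xi,\eta\in K$ forces $\omega\in K^\diamond$, and since $|\xi+\eta|\leq 2$ makes the Jacobian factor at most $1$ when $n\geq 3$, changing variables and then dropping the constraint $|\zeta|<2$ gives
\[
\int_K\int_K|\widehat{w}(\xi-\eta)|^2\,\mathrm{d}\sigma\,\mathrm{d}\sigma\leq\int_{K^\diamond}\int_{\langle\omega\rangle^\perp}\bigl|\widehat{Xw(\omega,\cdot)}(\zeta)\bigr|^2\,\mathrm{d}\zeta\,\mathrm{d}\sigma(\omega),
\]
and Plancherel on each hyperplane $\langle\omega\rangle^\perp$ identifies the right-hand side with $\|Xw\|_{L^2(\{(\omega,v):\omega\in K^\diamond,\,v\in\langle\omega\rangle^\perp\})}^2$, completing the case $n\geq 3$. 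When $n=2$ the Jacobian factor is $(|\xi+\eta|/2)^{-1}$; if $K$ is contained in a fixed closed set with no antipodal points then $|\xi+\eta|$ is bounded below on $K\times K$ by compactness, and the same argument produces the inequality with a constant depending only on that set.

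The main obstacle is the Jacobian identity above. One must fix the correct ``fibre-bundle'' interpretation of the measure $\mathrm{d}\sigma(\omega)\,\mathrm{d}\zeta$ on the total space $\{(\omega,\zeta):\zeta\in\langle\omega\rangle^\perp,\,|\zeta|<2\}$ (integrate Lebesgue measure on $\langle\omega\rangle^\perp$ over $\langle\omega\rangle^\perp$, then $\mathrm{d}\sigma$ over $\mathbb{S}^{n-1}$) and compute a $(2n-2)\times(2n-2)$ determinant by adapting orthonormal frames to this measure; the clean factorisation $(|\xi+\eta|/2)^{n-3}$ --- and in particular that the exponent is nonnegative precisely when $n\geq 3$ --- is what both delivers the sharp constant $1$ and explains why $n=2$ is genuinely borderline, since there the factor blows up as $\xi,\eta$ approach an antipodal pair.
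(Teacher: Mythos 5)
Your proposal is correct and is essentially the paper's own argument in lightly repackaged form: the Bessel step for the orthonormal system $H_j(\xi,\eta)=\overline{g_j(\xi)}g_j(\eta)$ in $L^2(K\times K)$ is exactly the Hilbert--Schmidt bound $\|\mathcal{E}_K^* w\,\mathcal{E}_K\|_{\mathcal{C}^2}^2\leq\int_K\int_K|\widehat{w}(\xi-\eta)|^2\,\mathrm{d}\sigma\,\mathrm{d}\sigma$ that the paper obtains via Frank--Sabin/Schatten duality. Your single change of variables $(\xi,\eta)\mapsto(\omega,\zeta)$ with Jacobian factor $\bigl(|\xi+\eta|/2\bigr)^{n-3}=|\omega\cdot\xi|^{n-3}$, together with the Fourier slice identity and Plancherel on $\langle\omega\rangle^\perp$, is precisely the content of the tomographic estimate (Lemma~\ref{tomlemma}) and the surrounding computation, including the same mechanism (boundedness of the Jacobian for $n\geq 3$, removal of its singularity under the no-antipodal-points hypothesis for $n=2$).
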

    We recall that $K^\diamond$ is the set of all midpoints of pairs of points from $K$, suggesting that some flexibility in the restriction of the X-ray transform on the right-hand side of \eqref{p} may be appropriate when $p>1$. On the other hand, our approach permits signed weights $w$ in the statement of Theorem \ref{mainsphereintro}, and in this setting it may be seen that $K^\diamond$ cannot be replaced with $K$ in \eqref{onmtgenSn-1intro}; see Remark \ref{Remcopos}. 
    
    It is unsurprising that Theorem \ref{mainsphereintro} is more accessible than the classical Mizohata--Takeuchi inequalities \eqref{MTS} as it has a more comprehensive $L^2$ structure. In particular, the X-ray norm on the right-hand side permits some interpretation as a certain directional fractional integral norm. Specifically,
for a subset $E$ of $\mathbb{S}^{n-1}$ a routine computation reveals that
    $$\|Xw\|_{L^2(\{(\omega,v):\omega\in E, v\in \langle \omega\rangle^\perp\})}^2=\langle G_E*w, w\rangle,$$
    where $G_E(x)\sim\mathbf{1}_{\Gamma(E)}(x)/|x|^{n-1}$ and $\Gamma(E)$ is the symmetric cone in $\mathbb{R}^n$ generated by $E$.
    This is of course consistent with the well-known identity $\|Xw\|_2=c_n\|(-\Delta)^{-1/4}w\|_2$ in the unrestricted case. Such expressions have certain $L^p$ bounds by (a simple variant of) the Hardy--Littlewood--Sobolev inequality -- specifically,
$$\|Xw\|_{L^2(\{(\omega,v):\omega\in E, v\in \langle \omega\rangle^\perp\})}\lesssim |E|^{\frac{n-1}{2n(n+1)}}\|w\|_{\frac{2n}{n+1}},
$$
allowing Theorem \ref{mainsphereintro} (and our other results) to be quickly connected to the more conventional orthonormal extension estimates in the literature. 
On a related note, a benefit of permitting signed weights (as we do in Theorem \ref{mainsphereintro}) is that they allow one to deduce Sobolev estimates for $|\widehat{g\mathrm{d}\sigma}|^2$ from \eqref{onmtgenSn-1intro}. These may be seen to recover known $L^p$ orthonormal extension estimates via Sobolev embeddings; see the forthcoming contextual remarks at the end of this section.

Our proof of Theorem \ref{mainsphereintro} proceeds via the Schatten space perspective that is prevalent in the general setting of estimates for orthonormal systems. Indeed, since \eqref{p} is uniform in the collection of all orthonormal sequences $(g_j)$, thanks to \cite[Theorem 2.2]{Simon}, this recasts \eqref{p} as 
\begin{equation}\label{shp}
\|\mathcal{E}_K^*w\mathcal{E}_K\|_{\mathcal{C}^p}\lesssim\|Xw\|_{L^p(\{v\in\langle\omega\rangle^\perp, \omega\in K\})},
\end{equation}
where $\mathcal{E}_Kg:=\mathcal{E}(\mathbf{1}_Kg)$, with $\mathcal{E}g:=\widehat{g\mathrm{d}\sigma}$; this perspective is also noted in \cite{Cairo} in the setting of the original Mizohata--Takeuchi conjecture \eqref{MTSglobal}, where the Schatten norm $\|\cdot\|_{\mathcal{C}^\infty}$ coincides with the usual operator norm.
The reduction to \eqref{shp}, along with the fact that the $\mathcal{C}^2$ norm coincides with the Hilbert--Schmidt norm, allows Theorem \ref{mainsphereintro} to be further reduced to 
the assertion that $$X_0^*\mathbf{1}_{K^\diamond}-|\widehat{\mathbf{1}_K\mathrm{d}\sigma}|^2,$$
or equivalently
$$
\frac{\mathbf{1}_{K^\diamond}(\hat{x})+\mathbf{1}_{K^{\diamond}}(-\hat{x})}{|x|^{n-1}}-|\widehat{\mathbf{1}_K\mathrm{d}\sigma}(x)|^2
$$
is a positive semi-definite function (or more accurately, distribution), where $X_0f(\omega):=Xf(\omega,0)$ is the X-ray transform \eqref{ex} restricted to lines through the origin and $\hat{x}=x/|x|$.
This is established via an elementary, yet seemingly new pointwise tomographic (or ``hyperplane bundle") bound on the Fourier transform of $|\widehat{g\mathrm{d}\sigma}|^2$; see the forthcoming Lemma \ref{tomlemma} and its extension (Lemma \ref{tomlemma-generalS}) to more general hypersurfaces. 
It remains plausible that for \textit{nonnegative} weights \eqref{p} is true as stated when $p=2$ -- that is, with the X-ray transform restricted to the direction set $K$ rather than the larger $K^\diamond$. This would require one to establish the \textit{co-positive definiteness} of
$$X_0^*\mathbf{1}_{K}-c|\widehat{\mathbf{1}_K\mathrm{d}\sigma}|^2,$$
or equivalently
\begin{equation}\label{Saturday}
\frac{\mathbf{1}_{K}(\hat{x})+\mathbf{1}_{K}(-\hat{x})}{|x|^{n-1}}-c|\widehat{\mathbf{1}_K\mathrm{d}\sigma}(x)|^2
\end{equation}
for some $c>0$ independent of $K$; see Remarks \ref{boch}--\ref{impst} for clarification and further context, including how the co-positivity of \eqref{Saturday} is equivalent to the Stein-type formulation \eqref{stein} of the original Mizohata--Takeuchi problem \eqref{MTSglobal} for a special class of weights.

In addition to using a Schatten space perspective, we also develop a direct\footnote{Here we mean a direct approach to bounding the density $\sum_j \lambda_j|\widehat{g_j\mathrm{d}\sigma}|^2$, which is the way that orthonormal extension estimates are usually formulated.} approach to Theorem \ref{mainsphereintro} based on Wigner distributions, addressing an open problem raised in \cite{FS}. This appears to be related to \cite{GM}, where classical Wigner distributions are used to transfer results from kinetic theory to the study of quantum density operators; this connection is more evident in the setting of the orthonormal Strichartz estimates for the Schr\"odinger equation (where the underlying submanifold in a paraboloid) that we discuss shortly. An interesting feature of the direct (Wigner) approach is that it allows notions of orthonormality to arise organically
(where they may differ from the basic notion) and sometimes leads to stronger estimates. For example, our direct approach yields the following when $n=3$:
\begin{theorem}\label{main2intro}
Suppose that $(g_j)$ is an orthonormal sequence in $L^2(\mathbb{S}^2)$ for which $\langle g_j,\tilde{g}_k\rangle=0$ for all $j\not=k$, where $\tilde{g}:=\overline{g(-\cdot)}$. Then
\begin{equation}\label{onmtgenS2intro}
\sum_j\left(\int_{\mathbb{R}^3}|\widehat{g_j\mathrm{d}\sigma}|^2w\right)^2\leq \|Xw\|_{L^2(\{(\omega,v):\omega\in K^*, v\in \langle \omega\rangle^\perp\})}^2
\end{equation}
for all signed weight functions $w$,
where 
\begin{equation}\label{defK*}
K^*=\bigcup_j\supp(g_j)^\diamond.
\end{equation}
\end{theorem}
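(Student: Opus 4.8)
The plan is to prove \eqref{onmtgenS2intro} directly, via a generalised Wigner distribution on the tangent bundle of $\mathbb{S}^2$ that exploits an algebraic coincidence special to dimension three. For $g\in L^2(\mathbb{S}^2)$, $\omega\in\mathbb{S}^2$ and $\zeta\in\langle\omega\rangle^\perp$ with $|\zeta|<2$, write $\xi(\omega,\zeta):=\sqrt{1-|\zeta|^2/4}\,\omega+\zeta/2$ and $\eta(\omega,\zeta):=\sqrt{1-|\zeta|^2/4}\,\omega-\zeta/2$ for the two points of $\mathbb{S}^2$ whose geodesic midpoint is $\omega$ and with $\xi-\eta=\zeta$, and set
$$
W_g(\omega,v):=\int_{\langle\omega\rangle^\perp}g(\xi(\omega,\zeta))\,\overline{g(\eta(\omega,\zeta))}\,e^{-2\pi i v\cdot\zeta}\,\mathrm{d}\zeta,\qquad v\in\langle\omega\rangle^\perp,
$$
extended to $x\in\mathbb{R}^3$ by $W_g(\omega,x):=W_g(\omega,\mathrm{proj}_{\langle\omega\rangle^\perp}x)$. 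The first ingredient is an elementary but decisive change of variables: $(\omega,\zeta)\mapsto(\xi,\eta)$ maps $\{(\omega,\zeta):\omega\in\mathbb{S}^2,\ \zeta\in\langle\omega\rangle^\perp,\ |\zeta|<2\}$ bijectively (a.e.) onto $\mathbb{S}^2\times\mathbb{S}^2$ with $\mathrm{d}\zeta\,\mathrm{d}\sigma(\omega)=\mathrm{d}\sigma(\xi)\,\mathrm{d}\sigma(\eta)$; in $\mathbb{R}^n$ the Jacobian of this map is $(1-|\zeta|^2/4)^{(n-3)/2}$, so it is \emph{exactly $1$} precisely when $n=3$. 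Granting this, unfolding the double integral gives the marginal identity $\int_{\mathbb{S}^2}W_g(\omega,x)\,\mathrm{d}\sigma(\omega)=|\widehat{g\,\mathrm{d}\sigma}(x)|^2$, and one checks directly that $W_g$ is real-valued and, in the $\omega$-variable, supported on the geodesic midpoint set $\supp(g)^\diamond$.

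Next I would pair with the weight. Since $W_g(\cdot,\omega)$ is constant along lines in direction $\omega$, Fubini along those lines gives
$$
\int_{\mathbb{R}^3}|\widehat{g\,\mathrm{d}\sigma}|^2\,w=\int_{\mathbb{S}^2}\int_{\mathbb{R}^3}W_g(\omega,x)\,w(x)\,\mathrm{d}x\,\mathrm{d}\sigma(\omega)=\int_{\mathbb{S}^2}\int_{\langle\omega\rangle^\perp}W_g(\omega,v)\,Xw(\omega,v)\,\mathrm{d}v\,\mathrm{d}\sigma(\omega)=\langle W_g,Xw\rangle_{L^2(T)},
$$
where $T:=\{(\omega,v):\omega\in\mathbb{S}^2,\ v\in\langle\omega\rangle^\perp\}$ carries exactly the measure $\mathrm{d}v\,\mathrm{d}\sigma(\omega)$ underlying the right-hand side of \eqref{onmtgenS2intro}. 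The second ingredient is a Moyal-type orthogonality relation: Plancherel on each fibre $\langle\omega\rangle^\perp\cong\mathbb{R}^2$ followed by the change of variables above yields
$$
\langle W_{g_j},W_{g_k}\rangle_{L^2(T)}=\int_{\mathbb{S}^2}\int_{\langle\omega\rangle^\perp}g_j(\xi)\overline{g_j(\eta)}\,\overline{g_k(\xi)}g_k(\eta)\,\mathrm{d}\zeta\,\mathrm{d}\sigma(\omega)=\Big(\int_{\mathbb{S}^2}g_j\overline{g_k}\Big)\Big(\int_{\mathbb{S}^2}\overline{g_j}g_k\Big)=|\langle g_j,g_k\rangle|^2,
$$
so $(W_{g_j})$ is an orthonormal system in $L^2(T)$. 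Because each $W_{g_j}$ is supported in $\{(\omega,v):\omega\in\supp(g_j)^\diamond\}\subseteq\{(\omega,v):\omega\in K^*\}$, I may replace $Xw$ by $Xw\,\mathbf{1}_{\{\omega\in K^*\}}$ in the pairing and invoke Bessel's inequality:
$$
\sum_j\Big(\int_{\mathbb{R}^3}|\widehat{g_j\,\mathrm{d}\sigma}|^2 w\Big)^2=\sum_j\big|\langle W_{g_j},Xw\,\mathbf{1}_{\{\omega\in K^*\}}\rangle_{L^2(T)}\big|^2\le\|Xw\,\mathbf{1}_{\{\omega\in K^*\}}\|_{L^2(T)}^2=\|Xw\|_{L^2(\{(\omega,v):\omega\in K^*,\ v\in\langle\omega\rangle^\perp\})}^2,
$$
which is exactly \eqref{onmtgenS2intro}; all the interchanges are licit for $w$ in the Schwartz class, and the general signed case follows by approximation.

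The step I expect to be the real obstacle — and the one that pins the clean statement to $n=3$ — is the Jacobian computation underlying the marginal and Moyal identities: one must verify that in three dimensions the midpoint--difference parametrisation of $\mathbb{S}^2\times\mathbb{S}^2$ is measure preserving, which is precisely what makes the $\xi$- and $\eta$-integrals decouple into $|\langle g_j,g_k\rangle|^2$. In higher dimensions the residual Jacobian $(1-|\zeta|^2/4)^{(n-3)/2}$ is at most $1$, so a variant of the argument carried out in frequency variables (absorbing the Jacobian only at the final step) still yields the inequality with $K^*$, which is worth remarking on. Finally, I note that with this normalisation of $W_g$ only orthonormality of $(g_j)$ is used; the auxiliary hypothesis $\langle g_j,\tilde g_k\rangle=0$ is the natural substitute if one instead symmetrises $W_g$ over the antipodal map $\omega\mapsto-\omega$ — natural since $Xw$ sees only the unoriented line — because $W_g(-\omega,\cdot)=W_{\tilde g}(\omega,\cdot)$ then feeds cross terms $|\langle g_j,\tilde g_k\rangle|^2$ into the Moyal relation.
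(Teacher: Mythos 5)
Your proof is correct (at the same level of formal rigour as the paper concerning Fubini and fibrewise Plancherel, which can be justified for Schwartz $w$ using $\int_{\langle\omega\rangle^\perp}|g(\xi(\omega,\zeta))|^2\,\mathrm{d}\zeta\le 4\|g\|_{L^2(\mathbb{S}^2)}^2$), and while it follows the paper's direct Wigner route, your key object is genuinely different and buys more. The paper deduces Theorem \ref{main2intro} from Theorem \ref{main2n}, whose engine is the symmetrised spherical Wigner transform \eqref{wigna}; that transform is even in $\omega$, so its Moyal identity (Proposition \ref{spherical-moyal}) inevitably carries the antipodal cross term $|\langle f,\tilde{g}\rangle|^2$, and this is the sole reason the hypothesis $\langle g_j,\tilde{g}_k\rangle=0$ appears in the statement. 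Your unsymmetrised distribution, built on the oriented midpoint--difference parametrisation $(\omega,\zeta)\mapsto(\xi,\eta)$, removes that cross term. The measure-preservation you rightly identify as the crux is correct: it follows from the two formulas already in the paper, $\mathrm{d}\sigma(\omega'')=2^{n-1}|\omega\cdot\omega'|^{n-2}\mathrm{d}\sigma(\omega)$ for fixed $\omega'$ and $\mathrm{d}\zeta=2^{n-1}|\omega\cdot\omega'|\,\mathrm{d}\sigma(\omega')$ for fixed $\omega$ (proofs of Lemma \ref{tomlemma} and Proposition \ref{spherical-moyal}), whose combined Jacobian is $|\omega\cdot\omega'|^{n-3}=(1-|\zeta|^2/4)^{(n-3)/2}$, equal to $1$ exactly when $n=3$; since on your branch $\omega\cdot\omega'=\sqrt{1-|\zeta|^2/4}>0$, the map is a.e.\ bijective onto non-antipodal pairs (alternatively, rotational invariance plus testing on functions of $\xi\cdot\eta$ gives the same conclusion). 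Granting this, your marginal identity, support property and Moyal identity $\langle W_{g_j},W_{g_k}\rangle=|\langle g_j,g_k\rangle|^2$ all check out, and they are consistent with Proposition \ref{spherical-moyal} because for $n=3$ one has $W_{\mathbb{S}^2}(g,g)=\tfrac{1}{2}(W_g+W_{\tilde{g}})$ with $W_{\tilde{g}}(\omega,\cdot)=W_g(-\omega,\cdot)$, so Bessel's inequality indeed yields \eqref{onmtgenS2intro} with constant one. What your route buys is worth emphasising: only orthonormality of $(g_j)$ is used, so your argument establishes the conclusion, with the smaller set $K^*$, for \emph{every} orthonormal sequence in $L^2(\mathbb{S}^2)$, rendering the auxiliary hypothesis $\langle g_j,\tilde{g}_k\rangle=0$ of Theorem \ref{main2intro} unnecessary in dimension three; the symmetrised transform cannot see this because $Xw$ only detects the even-in-$\omega$ part of the Wigner distribution, and you apply Bessel before that information is discarded. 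One caveat: your closing suggestion that for $n>3$ the same conclusion survives by ``absorbing the Jacobian at the final step'' is not substantiated -- any renormalisation of $W_g$ that retains the marginal identity reinstates the weight $|\omega+\omega'|^{n-3}$ in the Gram matrix, which is precisely the kernel hypothesis \eqref{kerneldef} of Theorem \ref{main2n}, so plain orthonormality does not obviously suffice there; but that aside is not needed for the theorem under review.
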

Evidently $K^*$ is contained in $K^\diamond$, typically with strict inclusion, and so \eqref{onmtgenS2intro} is stronger than \eqref{onmtgenSn-1intro}, although evidently under a stronger hypothesis on the sequence $(g_j)$. This stronger hypothesis appears to be rather natural given the symmetries of \eqref{onmtgenS2intro}. We refer to the forthcoming Theorem \ref{main2n} for a statement in all dimensions.

We do not present any significant results towards \eqref{p} for $p>2$. 
However, since the Schatten space approach reduces \eqref{p} to \eqref{shp} we are able to recast \eqref{p} as the co-positivity of a certain explicit $p$-tensor form when $p\in 2\mathbb{N}$; this we have already mentioned in the case $p=2$. This appears to be a novel feature of the orthonormal (as opposed to the traditional single-function) framework -- see Remark \ref{Remcopos} and Section \ref{pten}.
Furthermore, there is some evidence that \eqref{p} might hold \textit{in reverse} in the excluded range $p<1$ if the sequence $(g_j)$ is complete. We make a number of observations in support of this in Section \ref{Sect:rev}.
\begin{remark}[Interpolation]
    It is not clear to us how one might effectively interpolate within the family of estimates \eqref{p} in general, even in its weaker undirected form 
\begin{equation}\label{undirp}
\sum_j\left(\int_{\mathbb{R}^n}|\widehat{g_j\mathrm{d}\sigma}|^2w\right)^p\lesssim\|Xw\|_p^p,
\end{equation}
where the restriction of the X-ray transform to the support set $K$ is removed. We are at least able to fill the gap between the established $p=1$ and $p=2$ cases provided $\|Xw\|_p$ is replaced with the larger $\|(-\Delta)^{-\frac{1}{2p'}}w\|_p$ in \eqref{undirp}; see Section \ref{gapfill}.
\end{remark}
\vspace{2mm}

\noindent \textbf{Other submanifolds and the paraboloid.}
We now turn to submanifolds of $\mathbb{R}^n$ other than the sphere. Depending on the choice, one approach (via Schatten spaces or Wigner distributions) may be more convenient than the other. In the case of rather general hypersurfaces $S$, where \eqref{p} naturally permits a scale-invariant formulation, the Schatten approach has an advantage in that it leads to statements that are considerably easier to interpret; see the forthcoming Section \ref{Sect:gen}.
However, in the specific case of the paraboloid, the direct (Wigner) approach appears to be most effective in all respects, allowing one to treat almost orthogonal data and more effectively track supports. In this parabolic setting the Mizohata--Takeuchi-type problems are naturally formulated as Strichartz estimates for the free Schr\"odinger equation
\begin{equation}\label{schrod}
2\pi i\frac{\partial u}{\partial t}=\Delta u
\end{equation} with initial datum $u_0\in L^2(\mathbb{R}^d)$.
Here a natural form of the global (and equally false; see \cite{Cairo}) Mizohata--Takeuchi conjecture is the inequality
\begin{equation}\label{mt}\int_{\mathbb{R}^{d+1}}|u(x,t)|^2w(x,t)\mathrm{d}x\mathrm{d}t\lesssim\|u_0\|_{L^2(\mathbb{R}^d)}\|\rho^*w\|_{L^\infty(\mathbb{R}^d\times \supp(\widehat{u}_0))},\end{equation}
where 
$$
\rho^*w(x,v)=\int_{-\infty}^\infty w(x-tv,t)\mathrm{d}t.
$$
The operator $\rho^*$ is readily interpreted as a certain parametrised space-time X-ray transform, computing space-time line integrals through a point $(x,0)\in\mathbb{R}^{d+1}$ in the direction $(-v,1)$.
This global inequality, despite being false as stated (see \cite{Cairo}) is readily verifiable for \textit{single wavepackets}, which again goes some way to explain the suggested control by line integrals. As suggested in \cite{Cairo}, it remains possible that the local version
\begin{equation}\label{mte}\int_{B_R}|u(x,t)|^2w(x,t)\mathrm{d}x\mathrm{d}t\lesssim_\varepsilon R^\varepsilon\|u_0\|_{L^2(\mathbb{R}^d)}\|\rho^*w\|_{L^\infty(\mathbb{R}^d\times \supp(\widehat{u}_0))},\end{equation}
holds for each $\varepsilon>0$ (under the additional assumption that $\supp(\widehat{u}_0)$ is supported in the unit ball); here $B_R$ denotes a space-time ball of diameter $R\gg 1$.

By analogy with \eqref{p}, for \textit{orthonormal} initial data $(u_{j,0})$ it seems natural to ask if an $L^p$ version of \eqref{mt} holds, specifically:
\begin{equation}\label{onmtconj}\sum_j\left(\int_{\mathbb{R}^{d+1}} |u_j|^2w\mathrm{d}x\mathrm{d}t\right)^p\lesssim\|\rho^*w\|_{L^p(\mathbb{R}^d\times K)}^p,
\end{equation}
for some $1\leq p<\infty$, where 
$$K=\bigcup_j\supp(\widehat{u}_{j,0}).$$
Like \eqref{mt}, the inequality \eqref{onmtconj} is naturally motivated by wavepacket examples.

As in the spherical case the inequality \eqref{onmtconj} is easily seen to hold when $p=1$ by a simple application of Bessel's inequality. When $p=2$, also as in the setting of the sphere, such estimates for orthonormal systems are very accessible, and this becomes apparent via their phase-space (or Wigner) formulations. For the original global inequality \eqref{mt} this is the inequality
\begin{equation}\label{mtpsf}
\int_{\mathbb{R}^{2d}}W(u_0,u_0)(x,v)\rho^*w(x,v)\mathrm{d}x\mathrm{d}v\lesssim\|\rho^*w\|_{L^\infty(\mathbb{R}^d\times \supp(\widehat{u}_0))},
\end{equation}
where
\begin{equation}\label{eucwig}
W(u_0,u_0)(x,v):=\int_{\mathbb{R}^d}u_0\left(x+\frac{y}{2}\right)\overline{u_0\left(x-\frac{y}{2}\right)}e^{-2\pi iv\cdot y}\mathrm{d}y
\end{equation} 
is the classical Wigner distribution of $u_0$; we stress that, being equivalent to \eqref{mt}, this also fails to hold.  Evidently the subtleties in such an inequality relate to the fact that $\|W(u_0,u_0)\|_1$ is unbounded (and indeed often infinite), a fact that is closely related to the nonpositivity of Wigner distributions; see \cite{Lerner}. As we shall see, the corresponding phase-space formulation of \eqref{onmtconj} in the case $p=2$ is much more tractable as one only needs to understand Wigner distributions in $L^2$.
Here we establish the following version of \eqref{onmtconj} for $p=2$ using the direct (Wigner) approach:
\begin{theorem}\label{main} If $(u_{j,0})$ is an orthonormal system then
\begin{equation}\label{onmt} \sum_j\left(\int_{\mathbb{R}^{d+1}}|u_j|^2w\right)^2\leq\|\rho^*w\|_{L^2(M)}^2
\end{equation}
where $$M=\bigcup_j\supp(W(u_{j,0},u_{j,0}))\subseteq \bigcup_j(\supp(u_{j,0})^\diamond\times\supp(\widehat{u}_{j,0})^\diamond),$$ and $E^\diamond=\frac{1}{2}(E+E)$ is the midpoint set of a subset $E\subseteq\mathbb{R}^d$. 
\end{theorem}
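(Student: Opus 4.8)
The plan is to pass to phase space via the Wigner transform, whereupon \eqref{onmt} becomes essentially a one-line consequence of Bessel's inequality. Write $W_j:=W(u_{j,0},u_{j,0})$. The first ingredient is the phase-space identity
\begin{equation*}
\int_{\mathbb{R}^{d+1}}|u_j(x,t)|^2 w(x,t)\,\d x\,\d t=\int_{\mathbb{R}^{2d}}W_j(x,v)\,\rho^*w(x,v)\,\d x\,\d v,
\end{equation*}
that is, the $p=1$ phase-space formulation already recorded in \eqref{mtpsf}. I would obtain it by combining three standard facts about the Wigner distribution: the marginal identity $\int_{\mathbb{R}^d}W(\psi,\psi)(x,v)\,\d v=|\psi(x)|^2$; the covariance of $W$ under the free Schr\"odinger flow \eqref{schrod}, under which $W(u_j(\cdot,t),u_j(\cdot,t))$ is a shear of $W_j$ in the position variable, so that, after integrating in $t$, the space-time X-ray transform $\rho^*$ emerges; and Fubini's theorem. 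The Fubini step is immediate when $w$ lies in a suitable test class such as the Schwartz class, and in general one may assume the right-hand side of \eqref{onmt} is finite since otherwise there is nothing to prove.

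The second, and decisive, ingredient is the Moyal (Wigner--Weyl) orthogonality relation $\langle W(f_1,g_1),W(f_2,g_2)\rangle_{L^2(\mathbb{R}^{2d})}=\langle f_1,f_2\rangle\overline{\langle g_1,g_2\rangle}$. Taking $f_i=g_i=u_{i,0}$ gives $\langle W_j,W_k\rangle_{L^2(\mathbb{R}^{2d})}=|\langle u_{j,0},u_{k,0}\rangle|^2=\delta_{jk}$, so the Wigner distributions $(W_j)$ themselves form an orthonormal sequence in $L^2(\mathbb{R}^{2d})$; in particular each $W_j$ is a real-valued $L^2$ function of unit norm, so every pairing below converges absolutely. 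Since $\supp(W_j)\subseteq M$ by the very definition of $M$, we have $\langle W_j,\rho^*w\rangle=\langle W_j,\one_M\rho^*w\rangle$, and Bessel's inequality for the orthonormal sequence $(W_j)$ applied to the test function $\one_M\rho^*w\in L^2(\mathbb{R}^{2d})$ gives
\begin{equation*}
\sum_j\bigl|\langle W_j,\rho^*w\rangle\bigr|^2=\sum_j\bigl|\langle W_j,\one_M\rho^*w\rangle\bigr|^2\leq\|\one_M\rho^*w\|_{L^2(\mathbb{R}^{2d})}^2=\|\rho^*w\|_{L^2(M)}^2.
\end{equation*}
Because $w$ is real, $\int_{\mathbb{R}^{d+1}}|u_j|^2w$ is real and equals $\langle W_j,\rho^*w\rangle$ by the identity above, so the left-hand side is exactly $\sum_j\bigl(\int_{\mathbb{R}^{d+1}}|u_j|^2w\bigr)^2$; this is \eqref{onmt}.

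It remains to locate $M$. This is read off from the two representations of the Wigner distribution: $W(\psi,\psi)(x,v)$ is an integral of $\psi(x+\tfrac{y}{2})\overline{\psi(x-\tfrac{y}{2})}$, which is nonzero only if $x\pm\tfrac{y}{2}\in\supp(\psi)$ for some $y$, forcing $x\in\supp(\psi)^\diamond$; writing $W(\psi,\psi)$ instead in terms of $\widehat{\psi}$ via Plancherel shows in the same way that its $v$-support lies in $\supp(\widehat{\psi})^\diamond$. Taking $\psi=u_{j,0}$ and unioning over $j$ yields $M\subseteq\bigcup_j\bigl(\supp(u_{j,0})^\diamond\times\supp(\widehat{u}_{j,0})^\diamond\bigr)$. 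I do not expect a serious analytic obstacle in this argument: the entire content is the recognition that the Wigner transform simultaneously linearises the weighted space-time norm into a pairing against $\rho^*w$ and, by the Moyal identity, sends orthonormal initial data to an orthonormal system, after which Bessel does all the work. The only points that need care are bookkeeping ones: pinning down the normalisations in \eqref{schrod} and \eqref{eucwig} so that the flow acts by an exact shear and the constant in \eqref{onmt} comes out to be precisely $1$, and the mild integrability/Fubini technicalities in the phase-space identity, which are handled by a density argument (or are vacuous when the right-hand side of \eqref{onmt} is infinite). The one genuinely structural feature --- and the reason this direct approach is preferable to Schatten duality in the parabolic setting --- is that the midpoint sets enter only through the exact supports of the $W_j$, so that $M$ may be taken as small as $\bigcup_j\supp(W_j)$ rather than the larger product set.
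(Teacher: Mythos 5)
Your proposal is correct and is essentially the paper's own argument: both rest on the phase-space representation $|u_j|^2=\rho(W(u_{j,0},u_{j,0}))$, the support containment giving the set $M$, and the Moyal identity showing that $(W(u_{j,0},u_{j,0}))$ is orthonormal in $L^2(\mathbb{R}^{2d})$. The only difference is cosmetic: you apply Bessel's inequality to $\mathbf{1}_M\rho^*w$ directly, whereas the paper phrases the same step via $\ell^2$ duality and Cauchy--Schwarz.
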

As we shall see in Section \ref{Sect:par}, the direct approach that we take to Theorem \ref{main} is particularly simple as the appropriate Wigner-type distribution is the classical one.
We note in passing that \eqref{onmt} is Fourier-invariant in the sense that it is unchanged if the initial data $(u_{j,0})$ is replaced with $(\widehat{u}_{j,0})$ -- a routine calculation using pseudo-conformal symmetry. Indeed one might consider strengthening the suggested inequality \eqref{onmtconj} to a similarly invariant statement for any $p$. It is also appropriate to mention \cite{Jam,Jan} here, where it is shown that the support of the Wigner distribution of a non-zero $L^2$ function necessarily has infinite Lebesgue measure.
If the sequence $(u_j)$ consists of a single solution then \eqref{onmt} may essentially be found in Dendrinos, Mustata and Vitturi \cite{DMV}; see \cite{BGNO} for further discussion and developments inspired by \eqref{mt} based on Wigner-type distributions. 

As in the setting of \eqref{p} for the sphere, we do not provide any significant evidence in support of \eqref{onmtconj} for any $p>2$. While both are suggested quite tentatively for $p>2$, they do interact naturally with a range of related inequalities in the literature. We conclude this section with some discussion of this type.
\subsection{Contextual remarks}
A seemingly natural case of the suggested inequality \eqref{p} is when $p=n+1$ since it allows one to invoke the endpoint X-ray estimate 
\begin{equation}\label{chr}
\|Xw\|_{n+1}\lesssim\|w\|_{\frac{n+1}{2}}
\end{equation}
of Christ \cite{Christk}, upon which \eqref{p} (in the equivalent form \eqref{Conj:onmtsphereintrodual}) is seen to imply the endpoint orthonormal Stein--Tomas inequality \eqref{ONST_endpoint}
of Frank and Sabin \cite{FS}. A similar remark may be made in the more traditional single-input setting 
of \eqref{p}, namely
\begin{equation}\label{single}
\int_{\mathbb{R}^n}|\widehat{g\mathrm{d}\sigma}|^2w\lesssim\|Xw\|_{L^p(\{(\omega,v):\omega\in\supp(g), v\in\langle\omega\rangle^\perp\})}\|g\|_2^2.
\end{equation}
In addition to implying the classical Stein--Tomas restriction theorem via \eqref{chr}, this also implies \eqref{MTS} with $\alpha=\frac{n-1}{p}$ via the trivial bound $$\|X(w\mathbf{1}_{B(0,R)})\|_p\lesssim R^{\frac{n-1}{p}}\|Xw\|_\infty.$$
In light of these things it may be interesting to compare the suggested inequality \eqref{single} in the particular case $p=n+1$ with the results of Carbery, Iliopoulou and Wang in \cite{CIW}. We remark that the mixed norm variant  
$$
\int_{B(0,R)}|\widehat{g\mathrm{d}\sigma}|^2w\leq C_\varepsilon R^\varepsilon \sup_{\omega\in\supp(w)}\|Xw(\omega,\cdot)\|_{L^{\frac{n+1}{2}}(\langle\omega\rangle^\perp)}\|g\|_2^2,
$$
of \eqref{single} follows quickly from Theorem 4.1 of \cite{CIW}. Evidently if \eqref{MTS} were to fail for some $\alpha>0$ then \eqref{single} (and thus \eqref{p}) would fail for $p\geq\frac{n-1}{\alpha}$; similar conclusions follow for submanifolds other than the sphere, for which we refer forward to Remark \ref{Rem:CZ}.

The orthonormal Stein--Tomas inequality \eqref{ONST_endpoint} is optimal in the sense that it is not possible to raise the summability exponent $\frac{n+1}{n}$ on the right-hand side (see \cite{FS}). Of course, by simply applying the triangle inequality on the left-hand side and then the classical (single-function) Stein--Tomas inequality, we obtain 
\begin{equation*} 
\bigg\| \sum_j\lambda_j|\widehat{g_j\mathrm{d}\sigma}|^2 \bigg\|_{L^\frac{n+1}{n-1}(\mathbb{R}^n)} \lesssim \|(\lambda_j)\|_{\ell^{1}}.
\end{equation*}
This argument makes no use of the orthogonality of the $(g_j)$ and one can view the size of the summability exponent on the right-hand side as a means of quantifying the gain from the orthogonality. More generally, Frank--Sabin \cite{FS} proved
\begin{equation} \label{ONST}
\bigg\| \sum_j\lambda_j|\widehat{g_j\mathrm{d}\sigma}|^2 \bigg\|_{L^q(\mathbb{R}^n)} \lesssim \|(\lambda_j)\|_{\ell^{\frac{n-1}{n}q}}
\end{equation}
for all orthonormal sequences $(g_j)$ in $L^2(\mathbb{S}^{n-1})$ if $q \in [\frac{n+1}{n-1},\infty]$, and the summability exponent on the right-hand side is optimal for all such $q$. We remark that \eqref{ONST} with $q = \infty$ follows by a straightforward argument using Bessel's inequality.

We have already observed that \eqref{p} with $p = n + 1$ yields \eqref{ONST} in the endpoint case $q = \frac{n+1}{n-1}$. Unsurprisingly, smaller values of $p$ also make contact with \eqref{ONST}. By $\ell^p$ duality and invoking the family of X-ray estimates $\|Xw\|_p \lesssim \|w\|_{q'}$ for $q' \in [1,\frac{n+1}{2}]$ and $\frac{1}{p'} = \frac{n}{n-1}\frac{1}{q}$, we may deduce \eqref{ONST} for any $q \in [\frac{n+1}{n-1},\infty]$. In particular, Theorem \ref{mainsphereintro} implies \eqref{ONST} when $q = \frac{2n}{n-1}$ (and hence $q \in [\frac{2n}{n-1},\infty]$ by interpolation). In fact, from Theorem \ref{mainsphereintro} we actually obtain the following refined smoothing estimate
\begin{equation} \label{STsmoothing}
\bigg\| \sum_j\lambda_j|\widehat{g_j\mathrm{d}\sigma}|^2 \bigg\|_{\dot{H}^{1/2}(\mathbb{R}^n)} \lesssim \|(\lambda_j)\|_{\ell^2}
\end{equation}
by duality and the fact that $\|Xw\|_2 = c_n\|w\|_{\dot{H}^{-1/2}(\mathbb{R}^n)}$. Here, $\dot{H}^{s}(\mathbb{R}^n)$ denotes a homogeneous Sobolev space and we immediately deduce \eqref{ONST} when $q = \frac{2n}{n-1}$ by a fractional Sobolev estimate.

In the spirit of the above discussion on the Stein--Tomas inequality, for any functional inequality whose input functions belong to a Hilbert space, it is a natural question to consider extensions to orthonormal sequences. A pioneering example of this is the Lieb--Thirring inequality (see \cite{LT}), which may be stated as
\begin{equation} \label{LiebThirring}
   \bigg\| \sum_j |g_j|^2 \bigg\|_{L^{\frac{n+2}{n}}(\mathbb{R}^n)} \lesssim \bigg( \sum_j \|g_j\|_{\dot{H}^1(\mathbb{R}^n)}^2 \bigg)^{\frac{n}{n+2}}
\end{equation}
for orthonormal sequences $(g_j)$ in $L^2(\mathbb{R}^n)$. This is an extension of a Gagliardo--Nirenberg--Sobolev inequality to orthonormal sequences and played a key role in the proof of stability of matter by Lieb and Thirring in \cite{LT}. Ultimately, stability of matter is concerned with lower bounds on certain ground state energies and was first established by Dyson and Lenard \cite{DL1, DL2}. The argument by Lieb and Thirring provided a shorter proof and gave sharper lower bounds and, as a result, the optimal constant in \eqref{LiebThirring} has become an object of significant interest. We refer the interested reader to \cite{Lieb_BAMS} and \cite{Frank_survey} for further details on Lieb--Thirring inequalities.

Pertinent to the present paper are the orthonormal Strichartz estimates for the free Schr\"odinger propagator first studied by Frank--Lewin--Lieb--Seiringer \cite{FLLS} and subsequent developments by Chen--Hong--Pavlovic \cite{CHP} and Frank--Sabin \cite{FS}. These estimates are an important tool for rigorously understanding the dynamics of large numbers of fermions (see, for example, \cite{CHP, CHP2, FS, LS1, LS2}). A model for a system of $N$ fermions is provided by the coupled Hartree system
\[
i\partial_t u_j = (-\Delta + V * \rho)u_j, \qquad u_j(0) = u_{j,0} 
\]
for $j = 1,\ldots,N$. Here, $\rho(x,t) = \sum_{k = 1}^N |u_k(x,t)|^2$ represents the total density of particles, and the function $V$ is an interaction potential. The Pauli exclusion principle states that two fermions cannot simultaneously occupy the same quantum state and from this it is natural to impose an orthonormality condition on the initial data $(u_{j,0})$. Orthonormal estimates of the type we consider in this paper may also be of interest in optics, and specifically the theory of structured light; see \cite{FOD}.

When $d = 1$, Theorem \ref{main} has connections to the orthonormal Strichartz estimates in \cite{CHP, FLLS, FS}. To see this, note first that from Plancherel's theorem we may write
\begin{equation} \label{1Dspecial}
\|\rho^*w\|_{L^2(\mathbb{R} \times \mathbb{R})} = c\|w\|_{L^{2}_{t}\dot{H}_x^{-1/2}(\mathbb{R} \times \mathbb{R})}
\end{equation}
and therefore, by Theorem \ref{main} and duality, we obtain
$$
\bigg\| \sum_j\lambda_j |u_j|^2\bigg\|_{L^2_t\dot{H}^{1/2}_x(\mathbb{R} \times \mathbb{R})}\lesssim \|(\lambda_j)\|_{\ell^2}.
$$
This recovers a one-dimensional estimate established in \cite[Theorem 3.3]{CHP}. Furthermore, by a critical Sobolev estimate
we deduce
\begin{equation} \label{BMO}
\bigg\|\sum_j\lambda_j |u_j|^2\bigg\|_{L^2_{t}\mathrm{BMO}_x(\mathbb{R} \times \mathbb{R})}\lesssim \|(\lambda_j)\|_{\ell^2}.
\end{equation}
The stronger inequality with $\mathrm{BMO}_x$ replaced by $L^\infty_x$ is known to fail, and Frank--Sabin \cite{FSsurvey} raised the question of whether the $L^\infty_x$-version holds if we replace the right-hand side of \eqref{BMO} by $\|(\lambda_j)\|_{\ell^{\beta}}$ for some $\beta \in (1,2)$ (see \cite{BLN} for some progress on this).

 Simply invoking the triangle inequality and the conservation of energy property satisfied by the free Schr\"odinger propagator, we obtain
\begin{equation*}
\bigg\|\sum_j\lambda_j |u_j|^2\bigg\|_{L^\infty_{t}L^1_x(\mathbb{R} \times \mathbb{R})}\lesssim \|(\lambda_j)\|_{\ell^1}.
\end{equation*}
Complex interpolation with \eqref{BMO} yields
\begin{equation} \label{onStrichartz1D}
\bigg\|\sum_j\lambda_j |u_j|^2\Bigr\|_{L^{q/2}_{t}L^{r/2}_x(\mathbb{R} \times \mathbb{R})}\lesssim \|(\lambda_j)\|_{\ell^\beta}
\end{equation}
whenever 
\[
0 < \frac{1}{r} \leq \frac{1}{2}, \quad \frac{1}{q} = \frac{1}{2}\bigg(\frac{1}{2} - \frac{1}{r}\bigg), \quad \frac{1}{\beta} = \frac{1}{2} + \frac{1}{r}
\]
and this recovers \cite[Theorem 8]{FS} when $d=1$. Recalling that our proof of Theorem \ref{main} is based on the Wigner transform rather than a Schatten space perspective, as far as we are aware our approach provides the first direct proof of the orthonormal Strichartz estimates in \eqref{onStrichartz1D}, thus addressing an open problem raised by Frank--Lewin--Lieb--Seiringer \cite{FLLS}.

For $d \geq 2$ it is less apparent to us whether Theorem \ref{main} has connections to existing orthonormal Strichartz estimates.
Observe that the \textit{global} $L^2$ norm of $\rho^*w$ is often infinite for $d \geq 2$, highlighting the importance of the restriction to the set $M$ in Theorem \ref{main}. Nevertheless, the proposed inequality \eqref{onmtconj} for a fixed $p > d + 1$ would imply the  orthonormal Strichartz estimate
\begin{equation} \label{onStrichartz}
\bigg\|\sum_j\lambda_j |u_j|^2\bigg\|_{L^{q/2}_{t}L^{r/2}_x(\mathbb{R} \times \mathbb{R}^d)}\lesssim \|(\lambda_j)\|_{\ell^\beta}
\end{equation}
with $\beta = p'$, $\frac{q}{2} = \frac{p}{d}$ and $\frac{r}{2} = \frac{p}{p-2}$. For this we use the estimate
\begin{equation} \label{rhostarbound}
\| \rho^* w \|_{L^{\beta'}_{x,v}(\mathbb{R}^d \times \mathbb{R}^d)} \lesssim \|w\|_{L^{(q/2)'}_tL^{(r/2)'}_x}
\end{equation}
which is known to hold when 
\begin{equation} \label{qrbetarange}
    \frac{d-1}{2(d+1)} < \frac{1}{r} \leq \frac{1}{2}, \quad \frac{1}{q} = \frac{d}{2}\bigg(\frac{1}{2} - \frac{1}{r}\bigg), \quad \frac{1}{\beta} = \frac{1}{2} + \frac{1}{r}.
\end{equation}
In fact, by duality the X-ray estimate \eqref{rhostarbound} is equivalent to the Strichartz estimate for the kinetic transport equation
\begin{equation} \label{kineticStrichartz}
\| \rho f \|_{L^{q/2}_tL^{r/2}_x} \lesssim \|f\|_{L^{\beta}_{x,v}},
\end{equation}
where $\rho$ is the velocity averaging operator given by
\[
\rho f(x,t) = \int_{\mathbb{R}^d} f(x+tv,v) \mathrm{d}v.
\]
The family of estimates \eqref{kineticStrichartz} go back to work of Castella--Perthame \cite{CP}, were extended up to but not including the endpoint $r = \frac{2(d+1)}{d-1}$ by Keel--Tao \cite{KT}, and the failure of the endpoint was shown in \cite{GP} and \cite{BBGL} when $d=1$ and $d \geq 2$, respectively. In particular, the above argument shows that if \eqref{onmtconj} is true for all $p > d + 1$, then the orthonormal Strichartz estimates \eqref{onStrichartz} follow under the conditions in \eqref{qrbetarange}. This means we would be able to derive all of the orthonormal Strichartz estimates obtained by Frank--Sabin in \cite[Theorem 8]{FS} when $d \geq 2$; for \eqref{onStrichartz}, it is known the summability exponent $\beta$ given by \eqref{qrbetarange} is optimal for such $q$ and $r$ (see \cite{FLLS}).

Interestingly, Sabin \cite{Sabin} showed that \eqref{kineticStrichartz} follows from \eqref{onStrichartz} under the conditions in \eqref{qrbetarange} by a semiclassical limiting argument. The above discussion raises the possibility that the converse is also true and tentatively suggests that \eqref{onmtconj} is one pathway to this.
\begin{remark}[Failure for $p>n+1$ for many compact convex $C^2$ hypersurfaces]\label{Rem:CZ}
    Since earlier versions of this paper, Cairo and Zhang \cite{CZ} have shown that the local Mizohata--Takeuchi inequality 
$$
\int_{B(0,R)}|\widehat{g\mathrm{d}\sigma}(x)|^2w(x)\mathrm{d}x\lesssim R^\alpha \|Xw\|_\infty\|g\|_{L^2(S)}
$$
fails whenever $\alpha<\frac{n-1}{n+1}$, the threshold identified earlier by Carbery, Iliopoulou and Wang \cite{CIW}, for many compact convex $C^2$ hypersurfaces $S$; here $\mathrm{d}\sigma$ denotes surface measure on $S$. Since $$\|Xw\|_{L^p(B(0,R)}\lesssim R^{\frac{n-1}{p}}\|Xw\|_\infty,$$ it follows that 
\begin{equation}\label{mtp}
\int_{B(0,R)}|\widehat{g\mathrm{d}\sigma}(x)|^2w(x)\mathrm{d}x\lesssim \|Xw\|_p\|g\|_{L^2(S)}
\end{equation}
and its natural extensions to orthonormal systems (see Section \ref{Sect:gen})
fail for such hypersurfaces whenever $p>n+1$. This provides a further indication that the exponent $p=n+1$ may be of particular interest; see \eqref{chr}. It remains plausible that \eqref{mtp} holds for the sphere or piece of paraboloid for any $p<\infty$, and further that \eqref{p} and \eqref{onmtconj} hold in some form for all $p<\infty$.
\end{remark}
\vspace{2mm}
\noindent\textbf{The structure of the paper.} In Section \ref{Sect:sp} we present our arguments in the setting of the sphere, before moving to more general manifolds $S$ in Section \ref{Sect:gen}. In Section \ref{Sect:par} we treat the case of the paraboloid, presenting our results as weighted Strichartz estimates for the Schr\"odinger equation. Section \ref{Sect:remarks} consists of a number of observations concerning \eqref{p} and its variants when $p\not=2$: Section \ref{gapfill} establishes some natural interpolants of the $p=1$ and $p=2$ cases,
Section \ref{pten} observes that \eqref{p} (when $p$ is an even integer) may be recast as the co-positivity of certain $p$-tensor forms, and Section \ref{Sect:rev} provides some evidence that \eqref{p} might hold in reverse when $p\leq 1$ under a completeness assumption on the orthonormal sequence.

\vspace{2mm}
\noindent\textbf{Acknowledgments.} 
The first and fifth authors are supported by EPSRC Grant EP/W032880/1. The second author is supported by JSPS Kakenhi grant numbers 22H00098, 23K25777, 24H00024. We thank Tony Carbery, Mark Dennis, Michal Kocvara and Amy Tierney for helpful conversations and correspondence.

\section{The sphere}\label{Sect:sp}
As we have indicated in the introduction, we have two approaches to establishing our orthonormal extension estimates. The first is via a Schatten space perspective that is prevalent in such settings (used for Theorem \ref{mainsphereintro}), and the second is via phase-space methods involving a spherical form of the Wigner distribution (used for Theorem \ref{main2intro} and its forthcoming extension to higher dimensions).

\subsection{The Schatten space approach} \label{subsection:Schatten}
In this section we prove Theorem \ref{mainsphereintro} by establishing the bound
\begin{equation} \label{e:Thm2-Support}
\| \mathcal{E}_K^* w \mathcal{E}_K \|_{\mathcal{C}^2}^2 \leq \|Xw\|_{L^2(\{(\omega,v):\omega\in K^\diamond, v\in \langle \omega\rangle^\perp\})}^2,
\end{equation}
where $\mathcal{E}_Kg :=\mathcal{E}(\mathbf{1}_Kg)$. 
Recall that this is equivalent to \eqref{onmtgenSn-1intro} since the supremum of the left-hand side of \eqref{onmtgenSn-1intro} over all orthonormal sequences $(g_j)$ coincides with the left-hand side of \eqref{e:Thm2-Support}.

For the left-hand side of \eqref{e:Thm2-Support} a straightforward computation reveals that
\[
\mathcal{E}_K^* w \mathcal{E}_Kg(\omega) = \int_{\mathbb{S}^{n-1}} g(\theta) \mathbf{1}_K(\omega)\mathbf{1}_K(\theta) \widehat{w}(\theta - \omega) \mathrm{d}\sigma(\theta),
\]
and so
\begin{align*}
    \| \mathcal{E}_K^* w \mathcal{E}_K \|_{\mathcal{C}^2}^2 & = \int_{\mathbb{S}^{n-1}} \int_{\mathbb{S}^{n-1}}  |\widehat{w}(\theta - \omega)|^2 \mathrm{d}\sigma_K(\omega)\mathrm{d}\sigma_K(\theta)
    \\
    &=\int_{\mathbb{R}^n}|\widehat{w}(\xi)|^2\mathrm{d}\sigma_K*\widetilde{\mathrm{d}\sigma}_K(\xi)\mathrm{d}\xi,
\end{align*}
where $\mathrm{d}\sigma_K := \mathbf{1}_K \mathrm{d}\sigma$. For the right-hand side of \eqref{e:Thm2-Support} we use the classical X-ray formula $\mathcal{F}_v Xw(\omega,\xi)=\widehat{w}(\xi)$ for $\xi\in\langle\omega\rangle^\perp$, followed by Plancherel's theorem on $\langle\omega\rangle^\perp$, to write
\begin{eqnarray*}
    \begin{aligned}
\|Xw\|_{L^2(\{(\omega,v):\omega\in K^\diamond, v\in\langle\omega\rangle^\perp\})}^2&=\int_{K^\diamond}\int_{\langle\omega\rangle^\perp}|\widehat{w}(\xi)|^2\mathrm{d}\xi\mathrm{d}\sigma(\omega)\\&=\int_{K^\diamond}\int_{\mathbb{R}^n}|\widehat{w}(\xi)|^2\delta(\omega\cdot\xi)\mathrm{d}\xi\mathrm{d}\sigma(\omega)\\&=\int_{\mathbb{R}^n}|\widehat{w}(\xi)|^2\left(\int_{K^\diamond}\delta(\omega\cdot\xi)\mathrm{d}\sigma(\omega)\right)\mathrm{d}\xi.
    \end{aligned}
\end{eqnarray*}
Thus \eqref{e:Thm2-Support} is equivalent to the inequality
\begin{equation}\label{essence}
\mathrm{d}\sigma_K*\widetilde{\mathrm{d}\sigma}_K(\xi)\leq \int_{K^\diamond}\delta(\omega\cdot\xi)\mathrm{d}\sigma(\omega),
\end{equation}
or in other words,
$$
\mathrm{d}\sigma_K*\widetilde{\mathrm{d}\sigma}_K\leq \mathcal{R}_0^*(\mathbf{1}_{K^\diamond})
$$
where
\begin{equation}\label{resrad}
\mathcal{R}_0f(\omega):=\int_{\mathbb{R}^n}\delta(\omega\cdot\xi)f(\xi)\mathrm{d}\xi
\end{equation}
is the Radon transform on $\mathbb{R}^n$ restricted to hyperplanes passing through the origin.
This we establish via the following lemma applied with $g=\mathbf{1}_K$.
    \begin{lemma}[Tomographic estimate]\label{tomlemma} Suppose $n\geq 3$.
    For a suitable $g:\mathbb{S}^{n-1}\rightarrow\mathbb{R}_+$ define $g^\diamond:\mathbb{S}^{n-1}\rightarrow\mathbb{R}_+$ by the ``sup-autocorrelation" formula
    $$
    g^\diamond(\omega)=\sup_{\omega'}g(\omega')g(\omega''),
    $$
    where $\omega''$ is such that $\omega$ is a great-circular (or geodesic) midpoint of $\omega'$ and $\omega''$. Then 
    \begin{equation}\label{functionaltom}
    g\mathrm{d}\sigma*\widetilde{g\mathrm{d}\sigma}\leq \mathcal{R}_0^*g^\diamond.
    \end{equation} 
    If $n=2$ then a similar conclusion (with a different implicit constant) holds provided $\supp(g)$ is contained in some (fixed) closed set that contains no antipodal points.
    \end{lemma}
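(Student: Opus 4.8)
The plan is to prove \eqref{functionaltom} as an (a.e.) pointwise inequality between locally integrable functions, via a disintegration of the autocorrelation measure. The starting observation is that $g\mathrm{d}\sigma*\widetilde{g\mathrm{d}\sigma}$ is precisely the pushforward of $g(\theta)g(\omega)\,\mathrm{d}\sigma(\theta)\,\mathrm{d}\sigma(\omega)$ on $\mathbb{S}^{n-1}\times\mathbb{S}^{n-1}$ under the difference map $(\theta,\omega)\mapsto\theta-\omega$. I would first record the fibre geometry: for $0<|\xi|<2$ the fibre over $\xi$ is $\{(\theta,\theta-\xi):\theta\in\Sigma_\xi\}$ with
\[
\Sigma_\xi:=\{\theta\in\mathbb{S}^{n-1}:\theta\cdot\xi=\tfrac{|\xi|^2}{2}\}=\tfrac{\xi}{2}+\rho\bigl(\mathbb{S}^{n-1}\cap\langle\xi\rangle^\perp\bigr),\qquad \rho:=\tfrac12\sqrt{4-|\xi|^2},
\]
while for $|\xi|\geq2$ it is essentially empty, so the convolution is supported in $\{|\xi|\leq2\}$ and it suffices to work on $0<|\xi|<2$. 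Since the difference map is equivariant for the diagonal $SO(n)$-action, and the stabiliser $SO(n-1)$ of $\xi$ acts transitively on $\Sigma_\xi$, the conditional measure of $\mathrm{d}\sigma\times\mathrm{d}\sigma$ on the fibre over $\xi$ must be the normalised $(n-2)$-dimensional surface measure on $\Sigma_\xi$. Consequently, for $0<|\xi|<2$,
\[
g\mathrm{d}\sigma*\widetilde{g\mathrm{d}\sigma}(\xi)=\frac{\Phi(\xi)}{\mathcal{H}^{n-2}(\Sigma_\xi)}\int_{\Sigma_\xi}g(\theta)\,g(\theta-\xi)\,\mathrm{d}\mathcal{H}^{n-2}(\theta),
\]
where $\Phi:=\mathrm{d}\sigma*\widetilde{\mathrm{d}\sigma}$ is the total density, whose classical value $\Phi(\xi)=|\mathbb{S}^{n-2}|\,\rho^{\,n-3}/|\xi|$ I would quote (it is elementary from the coarea formula, using that $\theta$ and $\theta-\xi$ meet at the fixed angle $\arccos(1-\tfrac{|\xi|^2}{2})$ along $\Sigma_\xi$, which produces the Jacobian $\tfrac{1}{|\xi|\rho}$).

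The one non-routine step is to identify the relevant midpoint. If $\theta\in\Sigma_\xi$ and $\omega=\theta-\xi$, then $\theta,\omega$ are non-antipodal points of $\mathbb{S}^{n-1}$ (as $|\xi|<2$), their linear midpoint $\tfrac{\theta+\omega}{2}=\theta-\tfrac{\xi}{2}$ lies in $\langle\xi\rangle^\perp$ with length $\rho$, and hence the \emph{geodesic} midpoint of $\theta$ and $\omega$ on $\mathbb{S}^{n-1}$ is $\eta:=\rho^{-1}(\theta-\tfrac{\xi}{2})\in\mathbb{S}^{n-1}\cap\langle\xi\rangle^\perp$. By the very definition of $g^\diamond$ this yields $g(\theta)\,g(\theta-\xi)\leq g^\diamond(\eta)$. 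Inserting this bound into the displayed identity and changing variables from $\theta\in\Sigma_\xi$ to $\eta\in\mathbb{S}^{n-1}\cap\langle\xi\rangle^\perp$ — an affine rescaling of the $(n-2)$-sphere whose Jacobian $\rho^{n-2}$ exactly cancels $\mathcal{H}^{n-2}(\Sigma_\xi)=\rho^{n-2}|\mathbb{S}^{n-2}|$ — and using also the elementary coarea identity $\mathcal{R}_0^*g^\diamond(\xi)=\tfrac{1}{|\xi|}\int_{\mathbb{S}^{n-1}\cap\langle\xi\rangle^\perp}g^\diamond\,\mathrm{d}\mathcal{H}^{n-2}$ together with the value of $\Phi$, I arrive at
\[
g\mathrm{d}\sigma*\widetilde{g\mathrm{d}\sigma}(\xi)\ \leq\ \rho^{\,n-3}\,\mathcal{R}_0^*g^\diamond(\xi).
\]

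It then remains only to dispose of the factor $\rho^{\,n-3}$. Since $\rho=\tfrac12\sqrt{4-|\xi|^2}\leq1$ on the support $0<|\xi|<2$, and $n-3\geq0$ when $n\geq3$, we have $\rho^{\,n-3}\leq1$ and hence \eqref{functionaltom}. (All of $g\mathrm{d}\sigma*\widetilde{g\mathrm{d}\sigma}$, $\Phi$ and $\mathcal{R}_0^*g^\diamond$ are $L^1_{\mathrm{loc}}$, so this a.e. pointwise inequality is the claimed distributional one.) For $n=2$ the factor $\rho^{\,n-3}=\rho^{-1}$ is no longer bounded, but if $\supp(g)$ lies in a fixed closed set with no antipodal points, then $c:=\operatorname{dist}(\supp(g),-\supp(g))>0$; on the support of the convolution $\xi=\theta-\omega$ with $\theta,\omega\in\supp(g)$, so $|\theta+\omega|\geq c$ forces $|\xi|^2=4-|\theta+\omega|^2\leq4-c^2$, whence $\rho\geq c/2$ and \eqref{functionaltom} holds with admissible constant $2/c$.

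The crux is the midpoint identification in the second paragraph: the pair $(\theta,\theta-\xi)$ feeding the autocorrelation at $\xi$ has its \emph{geodesic} (not linear) midpoint on the great $(n-2)$-sphere $\mathbb{S}^{n-1}\cap\langle\xi\rangle^\perp$, and this is precisely what forces $g^\diamond$ — and, for $g=\mathbf{1}_K$, the set $K^\diamond$ — onto the right-hand side. Everything else is the standard ``convolution of two spheres'' computation; the hypothesis $n\geq3$ is used only for $\rho^{\,n-3}\leq1$, which is exactly what must be compensated for by a support restriction when $n=2$.
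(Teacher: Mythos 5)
Your argument is correct, and it hinges on exactly the same geometric facts as the paper's proof, but the organisation is genuinely different and worth comparing. The paper works with the weak formulation \eqref{weak}: it tests against $\varphi$ and performs two successive changes of variables built on the reflection $R_\omega\omega'=2(\omega\cdot\omega')\omega-\omega'$ (midpoint variable first, then the chord $\xi=\omega'-R_\omega\omega'$), which entails the two-to-one/parity bookkeeping on hemispheres and the evenness of $g^\diamond$, and leaves the Jacobian factor $|\omega\cdot\omega'|^{n-3}\leq 1$ for $n\geq 3$. You instead compute the autocorrelation density pointwise in $\xi$: you disintegrate over the fibres of the difference map, quote the classical density $\mathrm{d}\sigma*\widetilde{\mathrm{d}\sigma}(\xi)=|\mathbb{S}^{n-2}|\rho^{n-3}/|\xi|$, identify the geodesic midpoint of a contributing pair as $\eta=\rho^{-1}(\theta-\tfrac{\xi}{2})\in\mathbb{S}^{n-1}\cap\langle\xi\rangle^\perp$ (indeed $R_\eta\theta=\theta-\xi$, so the definition of $g^\diamond$ applies verbatim), and rescale the fibre onto $\mathbb{S}^{n-1}\cap\langle\xi\rangle^\perp$. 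Since your $\rho=\tfrac12\sqrt{4-|\xi|^2}$ coincides with the paper's $|\omega\cdot\omega'|$ on the relevant configurations, both proofs reduce to the identical factor $\rho^{n-3}\leq 1$, and the $n=2$ case is handled the same way in each (antipodal separation forces $\rho$ bounded below; for a constant uniform over all $g$ supported in the fixed set $A$, simply take $c=\operatorname{dist}(A,-A)$, which your bound already gives). What your route buys: it avoids the parity bookkeeping and the evenness of $g^\diamond$, it produces the slightly stronger a.e.\ pointwise bound $g\mathrm{d}\sigma*\widetilde{g\mathrm{d}\sigma}\leq\rho^{n-3}\,\mathcal{R}_0^*g^\diamond$ (with the built-in sanity check that $g\equiv 1$ yields equality, confirming the Jacobian bookkeeping), and it makes the distributional statement transparent. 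What it costs: the disintegration step needs a word of justification, since conditional measures are only defined for a.e.\ $\xi$ -- either run the $SO(n-1)$-invariance argument a.e.\ and invoke uniqueness of the invariant probability measure on the orbit $\Sigma_\xi$, or bypass it with the smooth coarea formula away from $|\xi|\in\{0,2\}$, where the coarea Jacobian of the difference map is constant on each fibre by the same symmetry -- and you must supply the classical formula for $\mathrm{d}\sigma*\widetilde{\mathrm{d}\sigma}$; both are standard and do not affect correctness.
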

    \begin{proof}
      It suffices to prove that
      \begin{equation}\label{weak}
      \int_{\mathbb{S}^{n-1}}\int_{\mathbb{S}^{n-1}}\varphi(\omega'-\omega'')g(\omega')g(\omega'')\mathrm{d}\sigma(\omega'')\mathrm{d}\sigma(\omega')\leq \int_{\mathbb{S}^{n-1}}g^\diamond(\omega)\mathcal{R}_0\varphi(\omega)\mathrm{d}\sigma(\omega)
      \end{equation}
      for all nonnegative test functions $\varphi$ on $\mathbb{R}^n$. 
      For fixed $\omega'$ we make the change of variables $\omega''=R_\omega\omega'$ where
      \begin{equation}\label{omegadef}
R_\omega\omega'=2(\omega\cdot\omega')\omega-\omega',
\end{equation}
noting that the new variable $\omega$ is a great-circular (or geodesic) midpoint of $\omega'$ and $\omega''$. Evidently $\omega\mapsto R_\omega\omega'$ is not injective on $\mathbb{S}^{n-1}$ since $R_{-\omega}\omega'=R_\omega\omega'$. However, since it maps each component of $\mathbb{S}^{n-1}\backslash\langle\omega'\rangle^\perp$ bijectively to $\mathbb{S}^{n-1}\backslash\{-\omega'\}$ with $\mathrm{d}\sigma(\omega'')=2^{n-1}|\omega\cdot\omega'|^{n-2}\mathrm{d}\sigma(\omega)$ (see \cite{BGNO}), it follows that
      \begin{eqnarray*}
          \begin{aligned}
      \int_{\mathbb{S}^{n-1}}\int_{\mathbb{S}^{n-1}}\varphi(\omega'-\omega'')&g(\omega'')g(\omega')\mathrm{d}\sigma(\omega'')\mathrm{d}\sigma(\omega')\\
      &=2^{n-2}\int_{\mathbb{S}^{n-1}}\int_{\mathbb{S}^{n-1}}\varphi(\omega'-R_\omega\omega')g(\omega')g(R_\omega\omega')|\omega\cdot\omega'|^{n-2}\mathrm{d}\sigma(\omega)\mathrm{d}\sigma(\omega').
      \end{aligned}
      \end{eqnarray*}
      Next, for fixed $\omega$, we make the change of variables $\xi=\omega'-R_\omega\omega'$. This is a two-to-one map from $\mathbb{S}^{n-1}\backslash\langle\omega\rangle^\perp$ to the ball $B(0,2)$ in $\langle\omega\rangle^\perp$, which becomes one-to-one on each component hemisphere $\mathbb{S}^{n-1}_{\pm}$. Using the formula $\mathrm{d}\xi=2^{n-1}|\omega\cdot\omega'|\mathrm{d}\sigma(\omega')$ we have
      \begin{eqnarray*}
          \begin{aligned}
      2^{n-2}\int_{\mathbb{S}^{n-1}}\varphi(\omega'-R_\omega\omega')&g(\omega')g(R_\omega\omega')|\omega\cdot\omega'|^{n-2}\mathrm{d}\sigma(\omega')\\
      &=2^{n-2}\int_{\mathbb{S}^{n-1}_+}\varphi(\omega'-R_\omega\omega')g(\omega')g(R_\omega\omega')|\omega\cdot\omega'|^{n-2}\mathrm{d}\sigma(\omega')\\&+2^{n-2}\int_{\mathbb{S}^{n-1}_-}\varphi(\omega'-R_\omega\omega')g(\omega')g(R_\omega\omega')|\omega\cdot\omega'|^{n-2}\mathrm{d}\sigma(\omega')\\
      &=\frac{1}{2}\int_{\langle\omega\rangle^\perp\cap B(0,2)}\varphi(\xi)g(\omega'(\xi))g(R_\omega\omega'(\xi))|\omega\cdot\omega'(\xi)|^{n-3}\mathrm{d}\xi\\
      &+\frac{1}{2}\int_{\langle\omega\rangle^\perp\cap B(0,2)}\varphi(\xi)g(-\omega'(\xi))g(-R_\omega\omega'(\xi))|\omega\cdot\omega'(\xi)|^{n-3}\mathrm{d}\xi\\
      &\leq g^\diamond(\omega)\mathcal{R}_0\varphi(\omega)
      \end{aligned}
      \end{eqnarray*}
      whenever $n\geq 3$.
      The inequality \eqref{weak} follows on integrating in $\omega$.
      Here we have used that $g^\diamond$ is an even function.
      Note that for $n=2$ the additional hypothesis means that the singularity from the jacobian factor is removed. 
    \end{proof}
    We conclude this subsection with a number of remarks pertaining to Theorem \ref{mainsphereintro} and the above proof, along with the prospect that \eqref{p} might hold for $p=2$ as stated.
    \begin{remark}\label{boch}
    The inequality \eqref{functionaltom} amounts to the statement that
    \begin{equation}\label{bnobs}
    |\widehat{g\mathrm{d}\sigma}|^2\leq_{pd}X_0^*g^\diamond
    \end{equation}
    meaning that the function, or more accurately, distribution
    \begin{equation}\label{ftpd}   X_0^*g^\diamond-|\widehat{g\mathrm{d}\sigma}|^2
    \end{equation}
    is positive semi-definite in the sense that
    $$
    \int_{\mathbb{R}^n}(X_0^*g^\diamond-|\widehat{g\mathrm{d}\sigma}|^2)(x)w*\tilde{w}(x)\mathrm{d}x=\int_{\mathbb{R}^n\times\mathbb{R}^n}(X_0^*g^\diamond-|\widehat{g\mathrm{d}\sigma}|^2)(x-y)w(x)w(y)\mathrm{d}x\mathrm{d}y\geq 0
    $$
    for all real-valued test functions $w$. We recall that $$X_0f(\omega):=\int_{\mathbb{R}}f(t\omega)\mathrm{d}t$$
    is the X-ray transform restricted to lines through the origin in $\mathbb{R}^n$ and note the elementary formula \begin{equation}\label{formula}
    X_0^*f(x)=|x|^{-(n-1)}(f(\hat{x})+f(-\hat{x})).
    \end{equation} The point is that $X_0^*f$ is the Fourier transform of $\mathcal{R}_0^*f$,
    so that
    $$
    \int_{\mathbb{R}^n}(X_0^*g^\diamond-|\widehat{g\mathrm{d}\sigma}|^2)(x)w*\tilde{w}(x)\mathrm{d}x=\int_{\mathbb{R}^n}(\mathcal{R}_0^*g^\diamond(\xi)-g\mathrm{d}\sigma*\widetilde{g\mathrm{d}\sigma}(\xi))|\widehat{w}(\xi)|^2\mathrm{d}\xi\geq 0.
    $$
    \end{remark}
\begin{remark}[Non-negative weights and co-positivity]\label{Remcopos}
Our proof of Theorem \ref{mainsphereintro} reveals that $K^\diamond$ cannot be replaced with $K$ as suggested by \eqref{p}, if \textit{signed} weights $w$ are to be permitted. In the context of \textit{nonnegative} weights it remains plausible that it can since \eqref{e:Thm2-Support} and \eqref{essence} cease to be manifestly equivalent. That one might be able to do this is the assertion that 
$$X_0^*\mathbf{1}_K-|\widehat{\mathbf{1}_K\mathrm{d}\sigma}|^2$$ is a \textit{co-positive semi-definite} function, or if one wishes to allow a constant factor, the assertion that
\begin{equation}\label{copo}
X_0^*\mathbf{1}_K-c|\widehat{\mathbf{1}_K\mathrm{d}\sigma}|^2
\end{equation}
is a co-positive definite function for some $c>0$ independent of $K$. This is captured by the identity
\begin{eqnarray*}
    \begin{aligned}
\|Xw\|_{L^2(\{(\omega,v):\omega\in K, v\in\langle\omega\rangle^\perp\})}^2
-
c\| \mathcal{E}_K^* w \mathcal{E}_K \|_{\mathcal{C}^2}^2&=
\int_{\mathbb{R}^n\times\mathbb{R}^n}(X_0^*\mathbf{1}_K-c|\widehat{\mathbf{1}_K\mathrm{d}\sigma}|^2)(x-y)w(x)w(y)\mathrm{d}x\mathrm{d}y\\&=\int_{\mathbb{R}^n}(X_0^*\mathbf{1}_K-c|\widehat{\mathbf{1}_K\mathrm{d}\sigma}|^2)(x)w*\tilde{w}(x)\mathrm{d}x,
\end{aligned}
\end{eqnarray*}
clarifying what we mean by the co-positivite definiteness of \eqref{copo}, that is, positivity of the associated quadratic form on the cone of non-negative test functions $w$. By \eqref{formula}, the co-positive definiteness of \eqref{copo} becomes the (uniform in $K$) relation
\begin{equation}\label{bigquestion}
|\widehat{\mathbf{1}_K\mathrm{d}\sigma}(x)|^2\lesssim_{cpd}\frac{\mathbf{1}_{K}(\hat{x})+\mathbf{1}_{K}(-\hat{x})}{|x|^{n-1}},
\end{equation} 
which is something that follows quickly by stationary phase in the very special case that $K=\mathbb{S}^{n-1}$ since it is then true as a pointwise inequality. Here the relation $\lesssim_{cpd}$ is defined analogously to $\lesssim_{pd}$.
The suggested relation \eqref{bigquestion} should be compared with the relation $$|\widehat{\mathbf{1}_K\mathrm{d}\sigma}(x)|^2\leq_{pd}\frac{\mathbf{1}_{K^\diamond}(\hat{x})+\mathbf{1}_{K^\diamond}(-\hat{x})}{|x|^{n-1}}$$ that follows from Lemma \ref{tomlemma}.  If true \eqref{bigquestion} would seem rather subtle as the relation
$$
\frac{\mathbf{1}_{K^\diamond}(\hat{x})+\mathbf{1}_{K^\diamond}(-\hat{x})}{|x|^{n-1}}\lesssim_{cpd}\frac{\mathbf{1}_{K}(\hat{x})+\mathbf{1}_{K}(-\hat{x})}{|x|^{n-1}}
$$
uniformly in $K$ is quickly seen to be false (when $n=2$ in the first instance) by taking $K$ to be an $N$th generation Cantor middle-thirds construction on $\mathbb{S}^{n-1}$; the point being that $K$ may have arbitrarily small measure whilst $|K^\diamond|\sim 1$. 
It seems appropriate to note that testing a matrix for co-positive definiteness is known to be NP-hard \cite{MK}. 
\end{remark}
\begin{remark}[Functional forms]\label{ffp}
    Given the established functional relation \eqref{bnobs}, and the limiting identity \eqref{ah}, one might expect the suggested relation \eqref{bigquestion} to also have a functional generalisation taking the form
    \begin{equation}\label{bigquestionfun}
        |\widehat{g\mathrm{d}\sigma}(x)|^2\lesssim_{cpd}X_0^*(|g|^2)(x)=\frac{|g(\hat{x})|^2+|g(-\hat{x})|^2}{|x|^{n-1}}
    \end{equation}
    with implicit constant independent of $g$. 
If true, \eqref{bigquestionfun} should be contrasted with the pointwise ``far-field" relation
\begin{equation}
    \widehat{g\mathrm{d}\sigma}(x)=\frac{e^{-2\pi i|x|}g(\hat{x})}{(-i)^{\frac{n-1}{2}}|x|^{\frac{n-1}{2}}}+\frac{e^{2\pi i|x|}g(-\hat{x})}{i^{\frac{n-1}{2}}|x|^{\frac{n-1}{2}}}+ O\left(\frac{1}{|x|^{\frac{n+1}{2}}}\right)
\end{equation}
that may be established for smooth functions $g$ by stationary phase as $|x|\rightarrow\infty$. In particular it follows that
\begin{equation}\label{bigquestionfunstat}
        |\widehat{g\mathrm{d}\sigma}(x)|^2\lesssim\frac{|g(\hat{x})|^2+|g(-\hat{x})|^2}{|x|^{n-1}}+O\left(\frac{1}{|x|^{n+1}}\right).
    \end{equation}
However, the implicit constant in the $O$ notation here depends on the regularity of $g$, and is therefore far from uniform. From a physical perspective \eqref{bigquestionfun} asserts a certain uniform control of the intensity of a wavefield by that of its far field; we refer to \cite{CK} for further context of this type.
\end{remark}
\begin{remark}[Connection to Stein's conjecture for the extension operator]\label{impst}
The suggested functional relation \eqref{bigquestionfun} is easily seen to be equivalent to the Stein-type strengthening  
\begin{equation}\label{stein}
\int_{\mathbb{R}^n}|\widehat{g\mathrm{d}\sigma}(x)|^2w(x)\mathrm{d}x\lesssim\int_{\mathbb{S}^{n-1}}|g(\omega)|^2\sup_{v\in\langle\omega\rangle^\perp}Xw(\omega,v)\mathrm{d}\sigma(\omega)
\end{equation}
of the global Mizohata--Takeuchi inequality \eqref{MTSglobal} in the special case that the weights are auto-correlations of nonnegative functions, that is weights of the form $w*\tilde{w}$ for $w\geq 0$. Evidently for such weights \eqref{stein} becomes
\begin{eqnarray*}
    \begin{aligned}
        \int_{\mathbb{R}^n}|\widehat{g\mathrm{d}\sigma}(x)|^2w*\tilde{w}(x)\mathrm{d}x
        &\lesssim\int_{\mathbb{S}^{n-1}}|g(\omega)|^2\sup_{v\in\langle\omega\rangle^\perp}X(w*\tilde{w})(\omega,v)\mathrm{d}\sigma(\omega).
    \end{aligned}
\end{eqnarray*}
It remains to show that the supremum in $v$ here is attained at the origin, since
$$
\int_{\mathbb{S}^{n-1}}|g(\omega)|^2X(w*\tilde{w})(\omega,0)\mathrm{d}\sigma(\omega)=\int_{\mathbb{R}^n}\left(\frac{|g(\hat{x})|^2+|g(-\hat{x})|^2}{|x|^{n-1}}\right)w*\tilde w(x)\mathrm{d}x
$$
by an application of polar coordinates. However, by Fubini's theorem and the Cauchy--Schwarz inequality,
\begin{eqnarray*}
    \begin{aligned}
        X(w*\tilde{w})(\omega,v)=\int_{\langle\omega\rangle^\perp}Xw(\omega,v')Xw(\omega,v+v')\mathrm{d}v'\leq X(w*\tilde{w})(\omega,0)
    \end{aligned}
\end{eqnarray*}
for all $v\in\langle\omega\rangle^\perp$. This special case of \eqref{stein} is
    \begin{equation}\label{2st}
    \int_{\mathbb{R}^n}|\widehat{g\mathrm{d}\sigma}(x)|^2w*\tilde{w}(x)\mathrm{d}x\lesssim\int_{\mathbb{S}^{n-1}}|g(\omega)|^2\|Xw(\omega,\cdot)\|_{L^2_v}^2\mathrm{d}\sigma(\omega)
    \end{equation}
    holding for non-negative $w$.  
    For general weights the global Stein-type inequality \eqref{stein}, being stronger than \eqref{MTSglobal}, is also false. Curiously the class of weights $\{w*\tilde{w}:w\geq 0\}$ is, formally at least, the Fourier transform of the class $\{|\widehat{h}|^2:h\geq 0\}$ for which \eqref{stein} was shown to fail in \cite{Cairo}.
\end{remark}
\begin{remark}[Bilinear formulations]\label{Rem:bil}
    One can easily ``bilinearise" Lemma \ref{tomlemma}. Suppose that $g_1,g_2$ are nonnegative functions on $\mathbb{S}^{n-1}$. If we define the  ``sup-cross-correlation" $$g_1\diamond g_2(\omega)=\sup_{\omega'}g_1(\omega')g_2(\omega''),$$
    where $\omega''$ is such that $\omega$ is a geodesic midpoint of $\omega'$ and $\omega''$, then the proof of Lemma \ref{tomlemma} reveals that
    \begin{equation}\label{bilin} g_1\mathrm{d}\sigma*\widetilde{g_2\mathrm{d}\sigma}\leq \mathcal{R}_0^*(g_1\diamond g_2).
    \end{equation}
    \end{remark}
\begin{remark}[Subspace bundle identities/bounds]
    The inequality \eqref{functionaltom}, or equivalently the positive semi-definiteness of \eqref{ftpd}, may be viewed as a spatial substitute for the phase-space (or spherical Wigner) representation \begin{equation}\label{phasespace}
    |\widehat{g\mathrm{d}\sigma}|^2=X^*W_{\mathbb{S}^{n-1}}(g,g)
    \end{equation}
    that will play a central role in our direct approach in the next subsection; see \eqref{wigna}. 
    More generally \eqref{bilin} should be contrasted with the polarised phase-space identity 
    \begin{equation}\label{phasespacebil}
    \widehat{g_1\mathrm{d}\sigma}\overline{\widehat{g_2\mathrm{d}\sigma}}=X^*W_{\mathbb{S}^{n-1}}(g_1,g_2).
    \end{equation}
    We refer to \cite[Section 4]{BGNO} for \eqref{phasespace} and its generalisations to other surfaces. The inequalities \eqref{functionaltom} and \eqref{bilin} might be referred to as ``hyperplane bundle" bounds, adapting a term from \cite{Al} in the setting of ``ray bundle" identities of the form \eqref{phasespace} and \eqref{phasespacebil}.
\end{remark}
\subsection{The direct approach via spherical Wigner distributions}\label{Subsect:wigps}
As we indicate in the introduction, our direct approach does not lead to Theorem \ref{mainsphereintro}, but rather to a variant of it. Here we prove the following generalisation of Theorem \ref{main2intro}.
\begin{theorem}\label{main2n}
Suppose that $(g_j)$ is a sequence of functions on $\mathbb{S}^{n-1}$ for which 
\begin{eqnarray}\label{kerneldef}
    \begin{aligned}
L(j,k)&:=\int_{\mathbb{S}^{n-1}}g_j(\omega')\overline{g_k(\omega')}\int_{\mathbb{S}^{n-1}}\overline{g_j(\omega)}g_k(\omega)|\omega+\omega'|^{n-3}\mathrm{d}\sigma(\omega)\mathrm{d}\sigma(\omega')\\
            &+\int_{\mathbb{S}^{n-1}}g_j(\omega')\overline{\tilde{g}_k(\omega')}\int_{\mathbb{S}^{n-1}}\overline{g_j(\omega)}\tilde{g}_k(\omega)|\omega+\omega'|^{n-3}\mathrm{d}\sigma(\omega)\mathrm{d}\sigma(\omega')
    \end{aligned}
\end{eqnarray}
is the (real-valued) kernel of an $\ell^2(\mathbb{Z})$-bounded operator, then 
\begin{equation}\label{onmtgenSn}
\sum_j\left(\int_{\mathbb{R}^n}|\widehat{g_j\mathrm{d}\sigma}|^2w\right)^2\lesssim\|Xw\|_{L^2(\{(\omega,v):\omega\in K^*, v\in \langle \omega\rangle^\perp\})}^2 
\end{equation}
for all signed weight functions $w$,
where 
\begin{equation*}
K^*=\bigcup_j\supp(g_j)^\diamond.
\end{equation*}
\end{theorem}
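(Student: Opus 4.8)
The plan is to run the direct (Wigner) approach, exploiting the ray-bundle identity $|\widehat{g\mathrm{d}\sigma}|^2 = X^* W_{\mathbb{S}^{n-1}}(g,g)$ quoted in \eqref{phasespace}. By the $\ell^2$ duality already used for Theorem \ref{mainsphereintro}, it suffices to show that for real-valued $w$ and $(\lambda_j)\in\ell^2$,
\begin{equation*}
\Bigl|\sum_j\lambda_j\int_{\mathbb{R}^n}|\widehat{g_j\mathrm{d}\sigma}|^2 w\Bigr|\lesssim \|(\lambda_j)\|_{\ell^2}\,\|Xw\|_{L^2(\{(\omega,v):\omega\in K^*,\,v\in\langle\omega\rangle^\perp\})}.
\end{equation*}
Using $\int_{\mathbb{R}^n}|\widehat{g_j\mathrm{d}\sigma}|^2 w = \int X^*W_{\mathbb{S}^{n-1}}(g_j,g_j)\cdot w = \int W_{\mathbb{S}^{n-1}}(g_j,g_j)\,Xw$, the left-hand side becomes a pairing in phase space $\langle \sum_j\lambda_j W_{\mathbb{S}^{n-1}}(g_j,g_j),\,Xw\rangle$. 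The key point is that $\supp W_{\mathbb{S}^{n-1}}(g_j,g_j)$ projects (in the $\omega$ variable) into $\supp(g_j)^\diamond\subseteq K^*$, so only the values of $Xw$ over $K^*$ enter. Then by Cauchy--Schwarz in phase space (on the measure space $\{(\omega,v):\omega\in K^*, v\in\langle\omega\rangle^\perp\}$) the problem reduces to the Bessel-type bound
\begin{equation*}
\sum_j\Bigl(\int_{\{\omega\in K^*\}} |W_{\mathbb{S}^{n-1}}(g_j,g_j)(\omega,v)|^2 \,\mathrm{d}v\,\mathrm{d}\sigma(\omega)\Bigr)^{?}\ \text{controlled by}\ 1,
\end{equation*}
or more precisely to showing that the Gram-type operator with kernel $\langle W_{\mathbb{S}^{n-1}}(g_j,g_j), W_{\mathbb{S}^{n-1}}(g_k,g_k)\rangle$ (restricted to the relevant phase-space region) is $\ell^2$-bounded.

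Next I would compute that phase-space inner product explicitly. Expanding the spherical Wigner distribution and using Plancherel in the fiber variable $v$, the quantity $\int W_{\mathbb{S}^{n-1}}(g_j,g_j)\overline{W_{\mathbb{S}^{n-1}}(g_k,g_k)}$ collapses, after the same changes of variables $\omega''=R_\omega\omega'$ and $\xi=\omega'-R_\omega\omega'$ employed in the proof of Lemma \ref{tomlemma}, to an integral over $\mathbb{S}^{n-1}\times\mathbb{S}^{n-1}$ of the form $g_j(\omega')\overline{g_k(\omega')}\,\overline{g_j(\omega)}g_k(\omega)|\omega+\omega'|^{n-3}$, together with the companion term in which one of the pairs is replaced by $\tilde g$ (arising from the two hemispheres $\mathbb{S}^{n-1}_\pm$, exactly as the two terms appeared in the final display of the proof of Lemma \ref{tomlemma}). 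This is precisely $L(j,k)$ as defined in \eqref{kerneldef}. Thus the Gram kernel governing the sum equals $L(j,k)$, and the hypothesis that $L$ is the kernel of an $\ell^2(\mathbb{Z})$-bounded operator is exactly what is needed.

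To assemble the proof: write $a_j:=\int_{\mathbb{R}^n}|\widehat{g_j\mathrm{d}\sigma}|^2 w = \langle W_{\mathbb{S}^{n-1}}(g_j,g_j), Xw\rangle_{L^2(\{\omega\in K^*\})}$; by Cauchy--Schwarz, $|a_j|\le \|W_{\mathbb{S}^{n-1}}(g_j,g_j)\|_{L^2(\{\omega\in K^*\})}\|Xw\|_{L^2(\{\omega\in K^*\})}$ is too lossy, so instead expand $\sum_j a_j^2$ directly. Choosing signs $\epsilon_j=\mathrm{sgn}(a_j)$ we get $\sum_j a_j^2 = \langle \sum_j \epsilon_j |a_j| W_{\mathbb{S}^{n-1}}(g_j,g_j), Xw\rangle$; but the cleaner route is to set $b_j := a_j$, note $\sum_j b_j^2 = \langle \sum_j b_j W_{\mathbb{S}^{n-1}}(g_j,g_j), Xw\rangle \le \|\sum_j b_j W_{\mathbb{S}^{n-1}}(g_j,g_j)\|_{L^2}\,\|Xw\|_{L^2}$, and then $\|\sum_j b_j W_{\mathbb{S}^{n-1}}(g_j,g_j)\|_{L^2}^2 = \sum_{j,k} b_j b_k L(j,k) \lesssim \|(b_j)\|_{\ell^2}^2 = \sum_j b_j^2$ by the assumed boundedness of $L$. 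Dividing through by $(\sum_j b_j^2)^{1/2}$ yields $(\sum_j b_j^2)^{1/2}\lesssim \|Xw\|_{L^2(\{\omega\in K^*\})}$, which is \eqref{onmtgenSn}.

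The main obstacle, and the place requiring genuine care, is the support bookkeeping together with the change-of-variables computation identifying the phase-space Gram kernel with $L(j,k)$: one must verify that the Jacobian factors $2^{n-1}|\omega\cdot\omega'|^{n-2}$ and $2^{n-1}|\omega\cdot\omega'|$ combine (as in Lemma \ref{tomlemma}) to leave precisely the weight $|\omega\cdot\omega'|^{n-3}$, then re-express $|\omega\cdot\omega'|$ in terms of $|\omega+\omega'|$ using that $\omega$ is the geodesic midpoint of $\omega'$ and $R_\omega\omega'$ (so $|\omega+\omega'|$ and $|\omega\cdot\omega'|$ are monotonically related), and correctly account for the two-hemisphere decomposition producing the $\tilde g$ term. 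For $n=2$ the weight exponent $n-3=-1$ is singular, which is exactly why Theorem \ref{main2intro} and the hypothesis on antipodal-free supports are stated the way they are; in the general statement this is absorbed into the hypothesis that $L$ defines a bounded operator. A secondary point is justifying the manipulations for general signed $w$ by a density/test-function argument, which is routine given the Schwartz-class convention flagged in the introduction.
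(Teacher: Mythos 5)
Your proposal is correct and follows essentially the same route as the paper: the phase-space representation $|\widehat{g_j\mathrm{d}\sigma}|^2=X^*W_{\mathbb{S}^{n-1}}(g_j,g_j)$, the support observation placing $W_{\mathbb{S}^{n-1}}(g_j,g_j)$ over $K^*$, Cauchy--Schwarz in phase space, and the identification of the Gram kernel $\langle W_{\mathbb{S}^{n-1}}(g_j,g_j),W_{\mathbb{S}^{n-1}}(g_k,g_k)\rangle$ with (a constant multiple of) $L(j,k)$ via the change of variables $\omega''=R_\omega\omega'$, $\xi=\omega'-R_\omega\omega'$ and the hemisphere decomposition -- this is exactly the paper's spherical Moyal identity (Proposition \ref{spherical-moyal}), after which the $\ell^2$-boundedness hypothesis on $L$ closes the argument just as in the paper.
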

It will be clear from our proof that the implicit constant in \eqref{onmtgenSn} may be taken to be an explicit dimensional constant multiple of the norm of the operator with kernel $L(j,k)$.
Evidently Theorem \ref{main2n} contains Theorem \ref{main2intro} as $0\leq L(j,k)\lesssim\delta_{j,k}$ when $n=3$ under the orthonormality hypotheses of that theorem. As we clarify later, if $(g_j)$ is a sequence of spherical harmonics then the hypotheses of Theorem \ref{main2n} hold, at least in odd dimensions. We now turn to the proof of Theorem \ref{main2n}.
\begin{proof}
By duality it is equivalent to show that
\begin{equation}\label{todo}
    \int_{\mathbb{S}^{n-1}}\sum_j\lambda_j|\widehat{g_j\mathrm{d}\sigma}|^2w\lesssim\|(\lambda_j)\|_{\ell^2}\|Xw\|_{L^2(\{(\omega,v):\omega\in K^*, v\in \langle \omega\rangle^\perp\})}.
\end{equation}
We 
begin with 
the spherical Wigner transform
\begin{equation}\label{wigna}
W_{\mathbb{S}^{n-1}}(g_1,g_2)(\omega,v):=2^{n-2}\int_{\mathbb{S}^{n-1}}g_1(\omega')\overline{g_2(R_\omega\omega')}e^{-2\pi i(\omega'-R_\omega\omega')\cdot v}|\omega\cdot\omega'|^{n-2}\mathrm{d}\sigma(\omega'),
\end{equation}
where $R_\omega\omega'$ is given by \eqref{omegadef}.
 It is shown in \cite[Section 3]{BGNO} (see also \cite{Al} for the roots of this in the optics literature) that
\begin{equation}
    |\widehat{g\mathrm{d}\sigma}|^2=X^*W_{\mathbb{S}^{n-1}}(g,g),
\end{equation}
where $X^*$ is the adjoint X-ray transform. This yields
the phase-space representation
\begin{equation}
    \int_{\mathbb{S}^{n-1}}\sum_j\lambda_j|\widehat{g_j\mathrm{d}\sigma}|^2w=\int_{\mathbb{S}^{n-1}}\int_{\langle\omega\rangle^\perp}\sum_j \lambda_jW_{\mathbb{S}^{n-1}}(g_j,g_j)(\omega,v)Xw(\omega,v)\mathrm{d}v\mathrm{d}\sigma(\omega).
\end{equation}
Noting that $W_{\mathbb{S}^{n-1}}(g_j,g_j)(\omega,v)$ vanishes for $\omega\not\in K^*$, by the Cauchy--Schwarz inequality we have
$$
\int_{\mathbb{S}^{n-1}}\sum_j\lambda_j|\widehat{g_j\mathrm{d}\sigma}|^2w\leq\Bigl\|\sum_j\lambda_jW_{\mathbb{S}^{n-1}}(g_j,g_j)\Bigr\|_2\|Xw\|_{L^2(\{(\omega,v):\omega\in K^*, v\in \langle \omega\rangle^\perp\})},
$$
reducing matters to showing that
\begin{equation}\label{wigo}
\Bigl\|\sum_j\lambda_jW_{\mathbb{S}^{n-1}}(g_j,g_j)\Bigr\|_2\lesssim\|(\lambda_j)\|_{\ell^2}.
\end{equation}
Theorem~\ref{main2n} now follows from Proposition~\ref{spherical-moyal} below, which provides a spherical analogue of the classical (see for example \cite{Folland, Gosson}) Moyal identity.
\end{proof}
\begin{proposition}[Spherical Moyal Identity] 
\label{spherical-moyal}
For suitable functions $f_1, f_2$, $g_1$ and $g_2$, we have
\begin{eqnarray}\label{niceformula}
        \begin{aligned}
            \langle W_{\mathbb{S}^{n-1}}(f_1,f_2),W_{\mathbb{S}^{n-1}}(g_1,g_2)\rangle&=\frac{1}{2^{n-2}}\int_{\mathbb{S}^{n-1}}f_1(\omega')\overline{g_1(\omega')}\int_{\mathbb{S}^{n-1}}\overline{f_2(\omega)}g_2(\omega)|\omega+\omega'|^{n-3}\mathrm{d}\sigma(\omega)\mathrm{d}\sigma(\omega')\\
            &+\frac{1}{2^{n-2}}\int_{\mathbb{S}^{n-1}}f_1(\omega')\overline{\tilde{g}_2(\omega')}\int_{\mathbb{S}^{n-1}}\overline{f_2(\omega)}\tilde{g}_1(\omega)|\omega+\omega'|^{n-3}\mathrm{d}\sigma(\omega)\mathrm{d}\sigma(\omega'),
        \end{aligned}
    \end{eqnarray} 
where $\tilde{g}(\omega):=\overline{g(-\omega)}$. 
\end{proposition}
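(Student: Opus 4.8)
The plan is to compute the inner product $\langle W_{\mathbb{S}^{n-1}}(f_1,f_2),W_{\mathbb{S}^{n-1}}(g_1,g_2)\rangle$ directly from the definition \eqref{wigna}, expanding both Wigner transforms and integrating out the variable $v\in\langle\omega\rangle^\perp$ to produce a delta function, and then reorganising the remaining spherical integrals by undoing the change of variables $\omega''=R_\omega\omega'$ that appears in the definition. Concretely, I would first write
\[
\langle W_{\mathbb{S}^{n-1}}(f_1,f_2),W_{\mathbb{S}^{n-1}}(g_1,g_2)\rangle=\int_{\mathbb{S}^{n-1}}\int_{\langle\omega\rangle^\perp}W_{\mathbb{S}^{n-1}}(f_1,f_2)(\omega,v)\overline{W_{\mathbb{S}^{n-1}}(g_1,g_2)(\omega,v)}\,\mathrm{d}v\,\mathrm{d}\sigma(\omega),
\]
insert the oscillatory-integral formula for each factor (with integration variables $\omega'$ for the $f$-factor and $\theta'$ for the $g$-factor), and use that $\int_{\langle\omega\rangle^\perp}e^{-2\pi i(\omega'-R_\omega\omega'-\theta'+R_\omega\theta')\cdot v}\,\mathrm{d}v$ is a $\delta$-distribution on the $(n-1)$-plane $\langle\omega\rangle^\perp$. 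Since for fixed $\omega$ the vectors $\omega'-R_\omega\omega'$ and $\theta'-R_\omega\theta'$ both lie in $\langle\omega\rangle^\perp$, this $\delta$ forces $\omega'-R_\omega\omega'=\theta'-R_\omega\theta'$, which (using the explicit form \eqref{omegadef} of $R_\omega$) means $(\mathrm{Id}-R_\omega)(\omega'-\theta')=0$, i.e. $\omega'-\theta'\in\langle\omega\rangle$ — so either $\theta'=\omega'$ or $\theta'=R_\omega\omega'$. These two possibilities are exactly the sources of the two terms on the right-hand side of \eqref{niceformula}.

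Next I would change variables back from $(\omega,\omega')$ to the pair of sphere points $(\omega',\omega'')$ with $\omega''=R_\omega\omega'$, reversing the bijection described in the proof of Lemma \ref{tomlemma}: recall $\mathrm{d}\sigma(\omega'')=2^{n-1}|\omega\cdot\omega'|^{n-2}\mathrm{d}\sigma(\omega)$ on each hemisphere, and that $\omega$ is the geodesic midpoint of $\omega',\omega''$, so $|\omega\cdot\omega'|=|\omega\cdot\omega''|$ and $|\omega+\omega'|$ can be rewritten in terms of $|\omega'+\omega''|$ up to the normalisation implicit in \eqref{omegadef}. After this substitution the $|\omega\cdot\omega'|^{n-2}$ Jacobian weights from the two Wigner factors and from the change of variables combine; careful bookkeeping of the powers of $2$ and of $|\omega\cdot\omega'|$ should collapse the net weight to the single factor $|\omega+\omega'|^{n-3}$ appearing in \eqref{niceformula} (note $(n-2)+(n-2)-(n-1)=n-3$, which is the telling arithmetic). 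In the first case $\theta'=\omega'$ one obtains $f_1(\omega')\overline{g_1(\omega')}$ from the "outer" points and $\overline{f_2(\omega'')}g_2(\omega'')$ from the "$R_\omega$-image" points, giving the first term; in the second case $\theta'=R_\omega\omega'=\omega''$ the roles of $g_1,g_2$ get swapped and one picks up $g(-\cdot)$ through the hemisphere accounting, producing the $\tilde g$'s in the second term.

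The main obstacle I anticipate is the bookkeeping around the non-injectivity of $\omega\mapsto R_\omega\omega'$ (the $\pm\omega$ ambiguity) and of the map $\omega'\mapsto\omega'-R_\omega\omega'$ onto $B(0,2)\cap\langle\omega\rangle^\perp$ (two-to-one, one-to-one on each hemisphere $\mathbb{S}^{n-1}_\pm$), exactly as in Lemma \ref{tomlemma}. Getting the constant $2^{-(n-2)}$ correct, and correctly identifying which hemisphere contributions pair up to give $g$ versus which give $\tilde g=\overline{g(-\cdot)}$, requires tracking antipodal points through every substitution; this is where the dimension-dependent constants and the precise appearance of $\tilde g_1,\tilde g_2$ in the second term are decided. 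A secondary technical point is justifying the interchange of the $v$-integration with the spherical integrations and the manipulation of the resulting $\delta$-distribution, which is routine for Schwartz $f_i,g_i$ (or smooth functions supported away from antipodal singularities when $n=2$) and is the sense in which the identity holds for "suitable" functions. Once \eqref{niceformula} is in hand, \eqref{wigo} follows by taking $f_i=g_i=g_j$, summing against $(\lambda_j)$, and recognising the double sum $\sum_{j,k}\lambda_j\overline{\lambda_k}\,2^{n-2}L(j,k)$ as a quadratic form bounded by $\|(\lambda_j)\|_{\ell^2}^2$ times the operator norm of the kernel $L$, which is precisely the hypothesis of Theorem \ref{main2n}.
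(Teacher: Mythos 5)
Your overall strategy---integrate out $v$ to enforce a coincidence constraint and then undo the change of variables $\omega''=R_\omega\omega'$ using the Jacobians from Lemma \ref{tomlemma}---is essentially the computation the paper performs, organised a little differently (the paper first folds the two hemispheres so that each $W_{\mathbb{S}^{n-1}}(\omega,\cdot)$ becomes an honest Fourier transform on $\langle\omega\rangle^\perp$ and then applies Parseval; your delta-function bookkeeping is the same mechanism). However, there is a concrete error at the crux of your argument, namely the resolution of the constraint $\omega'-R_\omega\omega'=\theta'-R_\omega\theta'$. Since $x-R_\omega x=2P_{\langle\omega\rangle^\perp}x$, the constraint says that $\theta'$ and $\omega'$ have the same orthogonal projection onto $\langle\omega\rangle^\perp$, and on the sphere the two solutions are $\theta'=\omega'$ and $\theta'=-R_\omega\omega'$, \emph{not} $\theta'=R_\omega\omega'$: indeed $\omega'-R_\omega\omega'$ lies in $\langle\omega\rangle^\perp$, so $\theta'=R_\omega\omega'$ fails your own condition $\omega'-\theta'\in\langle\omega\rangle$ except when $\omega'=\pm\omega$.

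This is not a cosmetic slip, because the antipodal reflection in the correct second branch is exactly what produces the $\tilde g$'s: for $\theta'=-R_\omega\omega'$ one has $R_\omega\theta'=-\omega'$, so the $g$-factor contributes $\overline{g_1(-R_\omega\omega')}\,g_2(-\omega')=\tilde g_1(R_\omega\omega')\,\overline{\tilde g_2(\omega')}$, which after the change of variables $\omega''=R_\omega\omega'$ is precisely the second term of \eqref{niceformula}. With your identification $\theta'=R_\omega\omega'$ the second contribution would instead read $f_1(\omega')g_2(\omega')\overline{f_2(\omega'')g_1(\omega'')}$, with no conjugation--antipodal structure, which is not the stated identity; deferring the appearance of $g(-\cdot)$ to unspecified ``hemisphere accounting'' does not repair this, since the hemisphere folding (the substitution $\omega'\mapsto-R_\omega\omega'$ on $\mathbb{S}^{n-1}_-$, as in the paper's proof) is exactly where that antipodal point must be tracked. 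Once the branch is corrected, the rest of your outline (the Jacobian arithmetic $(n-2)+(n-2)-(n-1)=n-3$, the two-to-one nature of $\omega\mapsto R_\omega\omega'$ accounting for the constant $2^{-(n-2)}$, and $|\omega'+\omega''|=2|\omega\cdot\omega'|$ from \eqref{omegadef}) does line up with the paper's computation, as does your final deduction of \eqref{wigo} from the boundedness of the kernel $L(j,k)$.
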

In particular, for $n=3$
\begin{equation*}
\langle W_{\mathbb{S}^{2}}(f,f),W_{\mathbb{S}^{2}}(g,g)\rangle=\frac{1}{2}\left(|\langle f,g\rangle|^2+|\langle f,\tilde{g}\rangle|^2\right).
\end{equation*}
\begin{proof}
For a fixed $\omega$ we write $\mathbb{S}^{n-1}_\pm$ for the two hemispheres constituting $\mathbb{S}^{n-1}\backslash \langle\omega\rangle^\perp$, where naturally $\omega\in\mathbb{S}^{n-1}_+$. Performing the isometric change of variables in the integral over $\mathbb{S}^{n-1}_{-}$  given by $\omega'\mapsto -R_{\omega}{\omega'}: \mathbb{S}^{n-1}_{-}\rightarrow \mathbb{S}^{n-1}_{+}$ and since $R_\omega(R_\omega\omega')=\omega'$, it is straightforward to verify that
\begin{eqnarray*}
    \begin{aligned}
        W_{\mathbb{S}^{n-1}}(g_1,g_2)(\omega,v)&=2^{n-2}\int_{\mathbb{S}^{n-1}_{+}\cup \mathbb{S}^{n-1}_{-}}g_1(\omega')\overline{g_2(R_\omega\omega')}e^{-2\pi i(\omega'-R_\omega\omega')\cdot v}|\omega\cdot\omega'|^{n-2}\mathrm{d}\sigma(\omega')\\
        &=2^{n-2}\int_{\mathbb{S}^{n-1}_+}\left(g_1(\omega')\overline{g_2(R_\omega\omega')}+\tilde{g}_2(\omega')\overline{\tilde{g}_1(R_\omega\omega')}\right)e^{-2\pi i(\omega'-R_\omega\omega')\cdot v}|\omega\cdot\omega'|^{n-2}\mathrm{d}\sigma(\omega').
    \end{aligned}
\end{eqnarray*}
As in \cite[Section 3]{BGNO} let us make the change of variables $\xi=\omega'-R_\omega\omega'$. This is a one-to-one mapping from $\mathbb{S}^{n-1}_{+}\backslash \langle\omega\rangle^\perp$ 
to the open unit ball in $\langle\omega\rangle^\perp$ (the tangent space to the sphere at $\omega$) with $\mathrm{d}\xi=2^{n-1}|\omega\cdot\omega'|\mathrm{d}\sigma(\omega')$.  Hence
\begin{eqnarray*}
    \begin{aligned}
        W_{\mathbb{S}^{n-1}}(g_1,g_2)(\omega,v)&=\frac{1}{2}\int_U \left(g_1(\omega'(\xi))\overline{g_2(R_\omega\omega'(\xi))}+\tilde{g}_2(\omega'(\xi))\overline{\tilde{g}_1(R_\omega\omega'(\xi))}\right)e^{-2\pi i\xi\cdot v}|\omega\cdot\omega'(\xi)|^{n-3}\mathrm{d}\xi,
    \end{aligned}
\end{eqnarray*}
where $\omega'(\xi)$ is the unique $\omega'$ such that $\xi=\omega'-R_\omega\omega'$. Therefore, by Parseval's identity and reversing the change of variables,
\begin{eqnarray*}
    \begin{aligned}
       \langle &W_{\mathbb{S}^{n-1}}(f_1,f_2)(\omega,\cdot),W_{\mathbb{S}^{n-1}}(g_1,g_2)(\omega,\cdot)\rangle_{L^2(\langle\omega\rangle^\perp)}\\
       &=2^{n-3}\int_{\mathbb{S}^{n-1}_+}\left(
       f_1(\omega')\overline{f_2(R_\omega\omega')}\overline{g_1(\omega')}g_2(R_\omega\omega')
+\tilde{f}_2(\omega')\overline{\tilde{f}_1(R_\omega\omega')}\overline{\tilde{g}_2(\omega')}\tilde{g}_1(R_\omega\omega')
       \right)
       |\omega\cdot\omega'|^{2n-5}\mathrm{d}\sigma(\omega')\\
       &+2^{n-3}\int_{\mathbb{S}^{n-1}_+}\left(f_1(\omega')\overline{f_2(R_\omega\omega'})\overline{\tilde{g}_2(\omega')}\tilde{g}_1(R_\omega\omega')
       +\tilde{f}_2(\omega')\overline{\tilde{f}_1(R_\omega\omega')}\overline{{g_1}(\omega')}{g_2}(R_\omega\omega')\right)
       |\omega\cdot\omega'|^{2n-5}\mathrm{d}\sigma(\omega').
    \end{aligned}
    \end{eqnarray*}
    Now,
    \begin{eqnarray*}
        \begin{aligned}
            \int_{\mathbb{S}^{n-1}_+}\tilde{f}_2(\omega')\overline{\tilde{f}_1(R_\omega\omega')}\overline{\tilde{g}_2(\omega')}\tilde{g}_1(R_\omega\omega')
        &|\omega\cdot\omega'|^{2n-5}\mathrm{d}\sigma(\omega')\\
        &=\int_{\mathbb{S}^{n-1}_+}\overline{f_2(-\omega')}f_1(-R_\omega\omega')g_2(-\omega')\overline{g_1(-R_\omega\omega')}
       |\omega\cdot\omega'|^{2n-5}\mathrm{d}\sigma(\omega')\\
       &=\int_{\mathbb{S}^{n-1}_-}\overline{f_2(R_\omega\omega')}f_1(\omega')g_2(R_\omega\omega')\overline{g_1(\omega')}
       |\omega\cdot\omega'|^{2n-5}\mathrm{d}\sigma(\omega'),
        \end{aligned}
    \end{eqnarray*}
    using the change of variables $\omega'\mapsto -R_\omega\omega'$.
    Similarly,
    \begin{eqnarray*}
        \begin{aligned}
            \int_{\mathbb{S}^{n-1}_+}\tilde{f_2}(\omega')\overline{\tilde{f_1}(R_\omega\omega')}\overline{g_1(\omega')}g_2(R_\omega\omega')
        &|\omega\cdot\omega'|^{2n-5}\mathrm{d}\sigma(\omega')\\
        &=\int_{\mathbb{S}^{n-1}_{-}}f_1(\omega')\overline{f_2(R_\omega\omega'})\overline{\tilde{g}_2(\omega')}\tilde{g}_1(R_\omega\omega')|\omega\cdot\omega'|^{2n-5}\mathrm{d}\sigma(\omega'),
        \end{aligned}
    \end{eqnarray*}
    and so,
    \begin{eqnarray*}
    \begin{aligned}
       \langle W_{\mathbb{S}^{n-1}}(f_1,f_2)(\omega,\cdot),&W_{\mathbb{S}^{n-1}}(g_1,g_2)(\omega,\cdot)\rangle_{L^2(\langle\omega\rangle^\perp)}\\&=
    2^{n-3}\int_{\mathbb{S}^{n-1}}f_1(\omega')\overline{f_2(R_\omega\omega'})\overline{g_1(\omega')}g_2(R_\omega\omega')
       |\omega\cdot\omega'|^{2n-5}\mathrm{d}\sigma(\omega')\\
       &+2^{n-3}\int_{\mathbb{S}^{n-1}}f_1(\omega')\overline{f_2(R_\omega\omega'})\overline{\tilde{g}_2(\omega')}\tilde{g}_1(R_\omega\omega')
       |\omega\cdot\omega'|^{2n-5}\mathrm{d}\sigma(\omega').
    \end{aligned}
    \end{eqnarray*}
    Hence
    \begin{eqnarray*}
        \begin{aligned}
            \langle W_{\mathbb{S}^{n-1}}&(f_1,f_2),W_{\mathbb{S}^{n-1}}(g_1,g_2)\rangle\\
            &=2^{n-3}\int_{\mathbb{S}^{n-1}}f_1(\omega')\overline{g_1(\omega')}\int_{\mathbb{S}^{n-1}}\overline{f_2(R_\omega\omega')}g_2(R_\omega\omega')|\omega\cdot\omega'|^{2n-5}\mathrm{d}\sigma(\omega)\mathrm{d}\sigma(\omega')\\
            &+2^{n-3}\int_{\mathbb{S}^{n-1}}f_1(\omega')\overline{\tilde{g}_2(\omega')}\int_{\mathbb{S}^{n-1}}\overline{f_2(R_\omega\omega')}\tilde{g}_1(R_\omega\omega')|\omega\cdot\omega'|^{2n-5}\mathrm{d}\sigma(\omega)\mathrm{d}\sigma(\omega')\\
            &=2^{n-3}\int_{\mathbb{S}^{n-1}}f_1(\omega')\overline{g_1(\omega')}\int_{\mathbb{S}^{n-1}}\overline{f_2(R_\omega\omega')}g_2(R_\omega\omega')\left|\frac{R_\omega\omega'+\omega'}{2}\right|^{n-3}|\omega\cdot\omega'|^{n-2}\mathrm{d}\sigma(\omega)\mathrm{d}\sigma(\omega')\\
            &+2^{n-3}\int_{\mathbb{S}^{n-1}}f_1(\omega')\overline{\tilde{g}_2(\omega')}\int_{\mathbb{S}^{n-1}}\overline{f_2(R_\omega\omega')}\tilde{g}_1(R_\omega\omega')\left|\frac{R_\omega\omega'+\omega'}{2}\right|^{n-3}|\omega\cdot\omega'|^{n-2}\mathrm{d}\sigma(\omega)\mathrm{d}\sigma(\omega'),
        \end{aligned}
    \end{eqnarray*}
    where we have also used \eqref{omegadef}.
The identity \eqref{niceformula} now follows on changing variables in the inner integrals from $\omega$ to $\omega''$, where $\omega''=R_\omega\omega'$, using the formula $\mathrm{d}\sigma(\omega'')=2^{n-1}|\omega\cdot\omega'|^{n-2}\mathrm{d}\sigma(\omega)$; see \cite[Section 3]{BGNO}. We clarify that the extra factor of $1/2$ needed to arrive at \eqref{niceformula} arises from the fact that the mapping $\omega\mapsto R_\omega\omega'$ is two-to-one, as clarified in the proof of Lemma \ref{tomlemma},
\end{proof}
\begin{example}[Spherical harmonics]
If the sequence $(g_j)$ is a sequence of spherical harmonics then the hypotheses of Theorem \ref{main2n} are satisfied in all \textit{odd} dimensions at least. 
We may verify this by estimating each of the two terms in \eqref{kerneldef} separately.
For the first, which we denote $\langle W(Y_j,Y_j),W(Y_k,Y_k)\rangle_+$, we develop $Y_j\overline{Y}_k$ in spherical harmonics
$$
Y_j\overline{Y_k}=\sum_{\ell}c_{jk}^{(\ell)}Y_\ell,
$$
so that
\begin{eqnarray*}
    \begin{aligned}
\langle W(Y_j,Y_j),W(Y_k,Y_k)\rangle_+&=\sum_{\ell,\ell'}c_{jk}^{(\ell)}\overline{c_{jk}^{(\ell')}}\int_{\mathbb{S}^{n-1}}Y_\ell(\omega')\int_{\mathbb{S}^{n-1}}\overline{Y_{\ell'}(\omega)}|\omega+\omega'|^{n-3}\mathrm{d}\sigma(\omega)\mathrm{d}\sigma(\omega').
\end{aligned}
\end{eqnarray*}
Then we use the Funk--Hecke theorem to observe that
$$
\int_{\mathbb{S}^{n-1}}\overline{Y_{\ell'}(\omega)}|\omega+\omega'|^{n-3}\mathrm{d}\sigma(\omega)=\alpha_n
\left(\int_{-1}^1P_{\ell'}(t)(1+t)^{\frac{n-3}{2}}(1-t^2)^{\frac{n-3}{2}}\mathrm{d}t\right)Y_\ell(\omega'),
$$
where $\alpha_n$ is some dimensional constant, and $P_\ell$ is the $\ell$th Legendre polynomial. By Rodrigues' formula this vanishes unless $|\ell'|\lesssim n$ whenever $n>1$ is odd, as the densities in the above expression are polynomial of degree $\lesssim n$. Hence by the orthonormality of the spherical harmonics,
$$
\langle W(Y_j,Y_j),W(Y_k,Y_k)\rangle_+=\sum_{|\ell|\lesssim n}|c_{kj}^{(\ell)}|^2\int_{\mathbb{S}^{n-1}}Y_\ell(\omega')\int_{\mathbb{S}^{n-1}}\overline{Y_{\ell}(\omega)}|\omega+\omega'|^{n-3}\mathrm{d}\sigma(\omega)\mathrm{d}\sigma(\omega'),
$$
and so
$$
|\langle W(Y_j,Y_j),W(Y_k,Y_k)\rangle_+|\lesssim\sum_{|\ell|\lesssim n}|c_{jk}^{(\ell)}|^2,
$$
with an implicit constant depending only on $n$. Now, by Schur's lemma it is enough to show that
$$
\sum_k\sum_{|\ell|\lesssim n}|c_{jk}^{(\ell)}|^2
$$
is bounded uniformly in $j$. To see this we observe that 
$$
c_{jk}^{(\ell)}=\int_{\mathbb{S}^{n-1}}Y_j\overline{Y_k}\overline{Y_\ell}=\langle Y_j\overline{Y}_\ell,Y_k\rangle,
$$
and so by Bessel's inequality,
$$
\sum_k\sum_{|\ell|\lesssim n}|c_{jk}^{(\ell)}|^2\leq\sum_{|\ell|\lesssim n}\|Y_j\overline{Y_\ell}\|_2^2\leq\sum_{|\ell|\lesssim n}\|Y_\ell\|_\infty^2,
$$
which is a finite constant independent of $j$. A similar argument applies to the second term in \eqref{kerneldef}. Evidently one could instead appeal to Theorem \ref{mainsphereintro} here, which is applicable in both odd and even dimensions.
\end{example}
\section{More general submanifolds}\label{Sect:gen}
For a given $1\leq p<\infty$ one might reasonably ask: under what conditions on a sequence of functions $(g_j)$ on a submanifold $S$ is it true that
\begin{equation}\label{onmtgen}
\sum_j\left(\int_{\mathbb{R}^n}|\widehat{g_j\mathrm{d}\sigma}|^2w\right)^p\lesssim\|X_Sw\|_{L^p(\{(u,v)\in TS:u\in E\})}^p
\end{equation}
for some geometrically-meaningful set $E\subseteq S$ containing $$K=\bigcup_j\supp(g_j)?$$
Here $\mathrm{d}\sigma$ denotes surface measure on $S$, and $X_S$ is the X-ray transform pulled back by the gauss map $N:S\rightarrow\mathbb{S}^{n-1}$, that is, $X_Sw(u,v)=Xw(N(u),v)$ where $(u,v)\in TS$.
As in the case of the sphere, \eqref{onmtgen} is easily seen to hold when $(g_j)$ is an orthonormal sequence in $L^2(S)$ and $E=K$ when $p=1$, and this follows by a straightforward application of Bessel's inequality.
In this section we provide some answers to this question when $p=2$, establishing results that may be interpreted as generalisations of Theorems \ref{mainsphereintro} and \ref{main2n}. While both approaches are effective, we shall see that the Schatten space perspective produces statements that are simpler to interpret.

The first thing to observe is that \eqref{onmtgen} is quickly established for all $p$ when $S$ is a hyperplane and $E=K$, clarifying that a nonvanishing curvature hypothesis on $S$ should not be decisive. If $S=\mathbb{R}^{n-1}\times\{0\}$ then \eqref{onmtgen} 
becomes a statement about the Fourier transform (a certain weighted inequality), specifically
\begin{equation}\label{hyp}
\sum_j\left(\int_{\mathbb{R}^{n-1}}|\widehat{g}_j(v)|^2Xw(e_n,v)\mathrm{d}v\right)^p\lesssim |K|\|Xw(e_n,\cdot)\|_{L^p(\mathbb{R}^{n-1})}^p,
\end{equation}
or equivalently
\begin{equation}\label{hyp'}
\int_{\mathbb{R}^{n-1}}\sum_j\lambda_j|\widehat{g}_j(v)|^2Xw(e_n,v)\mathrm{d}v\lesssim \|(\lambda_j)\|_{\ell^{p'}}|K|^{1/p}\|Xw(e_n,\cdot)\|_{L^p(\mathbb{R}^{n-1})}.
\end{equation}
Since $Xw(e_n,\cdot)$ is an arbitrary weight on $\mathbb{R}^{n-1}$, by duality \eqref{hyp'} is equivalent to
\begin{equation}\label{hyp''}
   \Bigl\|\sum_j\lambda_j |\widehat{g}_j|^2\Bigr\|_{p'}\lesssim \|(\lambda_j)\|_{\ell^{p'}}|K|^{1/p}.
   \end{equation}
To see this (and with constant $1$) 
we fix an orthonormal sequence $(g_{j})$ and define the operator $T$ acting on complex sequences $\lambda=(\lambda_{j})$ by $$T(\lambda)(\xi)=\sum_{j}\lambda_{j}|\widehat{g}_{j}(\xi)|^{2}.$$ For this we have the estimates
$$\|T(\lambda)\|_{L^1(\mathbb{R}^{n-1})}\leq \|\lambda\|_{\ell^1},\quad \|T(\lambda)\|_{L^\infty(\mathbb{R}^{n-1})}\leq |K| \|\lambda\|_{\ell^\infty},$$
the former being trivial and the latter being a simple consequence of Bessel's inequality. The inequality \eqref{hyp''} now follows by interpolation.
It is appropriate to caution that the case of a hyperplane does not necessarily constitute strong evidence in support of the truth of \eqref{onmtgen} with $E=K$, especially in light of the recent counterexample to \eqref{MTSglobal} in \cite{Cairo}. In particular, the counterexamples of Cairo and Zhang \cite{CZ} provide compact convex $C^2$ hypersurfaces for which \eqref{onmtgen} fails for $p>n+1$, even when $E=S$! See Remark \ref{Rem:CZ}.
\begin{remark}\label{remscal}
A further pertinent observation, not unrelated to the previous one, is that \eqref{onmtgen} is invariant under isotropic scalings. Concretely, if \eqref{onmtgen} holds for $S$ then it holds for any isotropic dilate $RS$ of $S$, and with the same implicit constant. This is a simple exercise using the definition of the measure
$$
\int_{TS}\varphi:=\int_S\int_{T_uS}\varphi(u,v)\mathrm{d}v\mathrm{d}\sigma(u)
$$
on the tangent bundle $TS$; see \cite{BBC3} for a similar remark in the context of the original Mizohata--Takeuchi conjecture \eqref{MTSglobal}. Consequently progress on \eqref{onmtgen} will not naturally refer to any lower bound on the curvature of $S$, in contrast with the more conventional linear extension problems formulated with Lebesgue measures.
\end{remark}
\subsection{A class of hypersurfaces}
For reasons that will quickly become apparent, in this section we restrict our attention to strictly convex hypersurfaces $S$ in $\mathbb{R}^n$. 
In order to extend Theorem \ref{mainsphereintro} to such $S$ we shall need to define an analogue of the ``spherical midpoint set" for a general subset of such a hypersurface $S$.  
To this end we use the convexity of $S$ to define, for $u'\neq u$, the point $R_u u'$ to be the unique $u''\in S$ such that
\begin{equation}\label{collision condition 1}
(u'-u'')\cdot N(u)=0
\end{equation}
and 
\begin{equation}\label{collision condition 2}
N(u)\wedge N(u')\wedge N(u'')=0.
\end{equation}
For completeness we define $R_{u}u$ to be $u$ for all points $u$. So that the map $u\mapsto R_uu'$ is surjective we assume in addition that the normal set $N(S)\subseteq\mathbb{S}^{n-1}$ is geodesically convex. Now, for an arbitrary $E\subseteq S$ we may then define
\begin{eqnarray}\label{genmid}
\begin{aligned}
E^\diamond&=\{u\in S:  (u'-u'')\cdot N(u)=0, N(u)\wedge N(u')\wedge N(u'')=0\mbox{ for some } u',u''\in E\}\\&=\{u\in S: u''=R_u u'\mbox{ for some } u',u''\in E\}.
\end{aligned}
\end{eqnarray}
If $S=\mathbb{S}^{n-1}$ then this is precisely the geodesic midpoint set of $E$ as defined in the introduction. In general the normal set $N(E^\diamond)$ of $E^\diamond$ is a certain set of ``in-between points" of pairs of points from the normal set $N(E)$ of $E$.

Our arguments will require that we impose some further structural conditions on the hypersurfaces $S$, starting with the assumption that it is \textit{a smooth graph} (meaning that our results will not logically imply those for the sphere in the previous section). With Remark \ref{remscal} in mind it is natural to seek sufficient conditions on $S$ that are invariant under isotropic scalings. A convenient condition, that may be thought of as a certain strong form of quasi-conformality of the shape operator of $S$, was considered recently in \cite{BGNO}, and will also be suitable here. We define the \textit{curvature quotient} $Q(S)$ of $S$ to be the maximum ratio of the principal curvatures of $S$, namely the smallest constant $c$ such that
$
\lambda_j(u)\leq c\lambda_{k}(u')
$
for all $u, u'\in S$ and $1\leq j,k\leq n-1$, where $\lambda_j(u)$ denotes the $j$th principal curvature of $S$ at the point $u$. For the rest of this section we suppose that $Q(S)$ is finite.
\subsection{The Schatten space approach}  The main purpose of this section is to establish the following:
 \begin{theorem}\label{mainn-genS}
Suppose $n\geq 2$ and that $S$ has finite curvature quotient $Q(S)$. Then
\begin{equation}\label{onmtgenSn-1-genS}
\sum_j\left(\int_{\mathbb{R}^n}|\widehat{g_j\mathrm{d}\sigma}|^2w\right)^2\lesssim\|X_Sw\|_{L^2(\{(u,v)\in TS:u\in K^\diamond\})}^2
\end{equation}
for all orthonormal sequences $(g_j)$ in $L^2(S)$ and signed weight functions $w$,
where 
\begin{equation}\label{defK-S}
K=\bigcup_j\supp(g_j).
\end{equation}
    \end{theorem}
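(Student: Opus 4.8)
The plan is to mimic the Schatten duality argument used for Theorem \ref{mainsphereintro}, replacing the sphere by the graph $S$ and tracking the extra jacobian factors that arise from the shape operator of $S$; finiteness of $Q(S)$ is precisely what keeps those factors comparable to constants. By $\ell^2$-duality and the Frank--Sabin formalism (operator $\gamma = \mathbf 1_K\gamma\mathbf 1_K$ with eigenfunctions $(g_j)$, eigenvalues $(\lambda_j)$, so $\|\gamma\|_{\mathcal C^2}=\|(\lambda_j)\|_{\ell^2}$ and ${\rm Tr}(\mathcal E^*_Kw\mathcal E_K\gamma)=\int\sum_j\lambda_j|\widehat{g_j\mathrm d\sigma}|^2w$), it suffices to prove the Hilbert--Schmidt bound
\begin{equation*}
\|\mathcal E_K^*w\mathcal E_K\|_{\mathcal C^2}\lesssim \|X_Sw\|_{L^2(\{(u,v)\in TS:u\in K^\diamond\})}.
\end{equation*}
Computing the kernel of $\mathcal E_K^*w\mathcal E_K$ on $L^2(S)$ as before gives $\mathcal E_K^*w\mathcal E_Kg(u)=\int_S g(u')\mathbf 1_K(u)\mathbf 1_K(u')\widehat w(u'-u)\,\mathrm d\sigma(u')$, hence
\begin{equation*}
\|\mathcal E_K^*w\mathcal E_K\|_{\mathcal C^2}^2=\int_S\int_S|\widehat w(u'-u)|^2\,\mathrm d\sigma_K(u)\mathrm d\sigma_K(u')=\int_{\mathbb R^n}|\widehat w(\xi)|^2\,\mathrm d\sigma_K*\widetilde{\mathrm d\sigma}_K(\xi)\,\mathrm d\xi,
\end{equation*}
while, exactly as in the spherical case, $\|X_Sw\|_{L^2(\{(u,v)\in TS:u\in K^\diamond\})}^2=\int_{\mathbb R^n}|\widehat w(\xi)|^2\bigl(\int_{K^\diamond}\delta(N(u)\cdot\xi)\,\mathrm d\sigma(u)\bigr)\mathrm d\xi$, using the slice formula $\mathcal F_vX_Sw(u,\xi)=\widehat w(\xi)$ for $\xi\perp N(u)$. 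So everything reduces to the tomographic (``hyperplane bundle'') inequality
\begin{equation*}
\mathrm d\sigma_K*\widetilde{\mathrm d\sigma}_K(\xi)\lesssim \int_{K^\diamond}\delta(N(u)\cdot\xi)\,\mathrm d\sigma(u),
\end{equation*}
which is the $S$-analogue of \eqref{essence}, and which I would establish through a functional (weak-form) lemma paralleling Lemma \ref{tomlemma}: for $g\ge 0$ on $S$ and $\varphi\ge 0$ a test function on $\mathbb R^n$,
\begin{equation*}
\int_S\int_S\varphi(u'-u'')g(u')g(u'')\,\mathrm d\sigma(u'')\mathrm d\sigma(u')\lesssim \int_S g^\diamond(u)\,\mathcal R_{0,S}\varphi(u)\,\mathrm d\sigma(u),
\end{equation*}
where $\mathcal R_{0,S}\varphi(u)=\int_{\langle N(u)\rangle^\perp}\varphi$ and $g^\diamond(u)=\sup\{g(u')g(u''):u''=R_uu'\}$; applying this with $g=\mathbf 1_K$ gives the claim since $\supp(\mathbf 1_K^\diamond)\subseteq K^\diamond$.

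The proof of this lemma follows the spherical template. For fixed $u'$ I change variables $u''=R_uu'$ (using \eqref{collision condition 1}--\eqref{collision condition 2}), which, by geodesic convexity of $N(S)$, parametrises $u''$ by the ``in-between point'' $u$; the jacobian $\mathrm d\sigma(u'')=J_1(u,u')\,\mathrm d\sigma(u)$ is an explicit expression in the normals and principal curvatures of $S$ at $u,u',u''$ (generalising the factor $2^{n-1}|\omega\cdot\omega'|^{n-2}$). Then, for fixed $u$, I change variables $\xi=u'-u''=u'-R_uu'$, mapping (each half of) a neighbourhood of $u$ in $S$ onto an open subset of the hyperplane $\langle N(u)\rangle^\perp$, with jacobian $\mathrm d\xi=J_2(u,u')\,\mathrm d\sigma(u')$. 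Bounding $g(u')g(u'')\le g^\diamond(u)$ and absorbing $J_1J_2^{-1}$ into a constant depending only on $Q(S)$ and $n$ yields the pointwise bound $\int \varphi(u'-R_uu')g(u')g(R_uu')J_1(u,u')\,\mathrm d\sigma(u')\lesssim g^\diamond(u)\int_{\langle N(u)\rangle^\perp}\varphi$; integrating in $u$ finishes the lemma. The evenness/two-to-one issues from the spherical case ($R_{-\omega}=R_\omega$) are replaced by the fact that $u'-R_uu'$ is two-to-one over the fibre, handled by splitting into the two local sheets as in Lemma \ref{tomlemma}.

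The main obstacle is the jacobian bookkeeping for the two changes of variables on a general convex graph $S$: one must verify that $u\mapsto R_uu'$ is well-defined and (locally) a diffeomorphism under geodesic convexity of $N(S)$, compute $J_1,J_2$ in terms of the Gauss map and shape operator, and — crucially — show that the net jacobian factor $J_1(u,u')/J_2(u,u')$ is bounded above by a constant depending only on $n$ and $Q(S)$, uniformly in $u,u',u''$ with $u''=R_uu'$. This is where finiteness of the curvature quotient is used in an essential way: it controls the ratios of principal curvatures at the three points simultaneously, just as it did in the related computations of \cite{BGNO}. The dimension restriction present for the sphere when $n=2$ is no longer needed here because the graph/convexity hypotheses already exclude antipodal degeneracies; the singular jacobian factor that forced the extra hypothesis in Lemma \ref{tomlemma} for $n=2$ does not appear once one is working with a graph of finite curvature quotient. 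Everything else — the duality reduction, the trace identity, and the slice formula for $X_S$ — is routine and identical to the spherical argument.
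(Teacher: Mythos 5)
Your proposal is correct and follows essentially the same route as the paper: $\ell^2$/Schatten duality reducing to the Hilbert--Schmidt bound, then to the pointwise convolution inequality $\mathrm{d}\sigma_K*\widetilde{\mathrm{d}\sigma}_K\lesssim\mathcal{R}_{S,0}^*(\mathbf{1}_{K^\diamond})$, established via a sup-autocorrelation tomographic lemma with the two changes of variables $u''=R_uu'$ and $\xi=u'-R_uu'$. The jacobian ratio bound you single out as the main obstacle, namely $J(u,u')/\widetilde{J}(u,u')\lesssim Q(S)^{\frac{5n-8}{2}}$, is exactly the estimate the paper imports from \cite[Section 4]{BGNO} rather than reproving, so your outline coincides with the paper's argument.
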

Our proof of Theorem \ref{mainn-genS} proceeds very much as in the spherical case of Section \ref{Sect:sp}, and so we only elaborate on the elements that are sensitive to the geometry of $S$. As we shall see, the implicit constant in \eqref{onmtgenSn-1-genS} will be seen to be at most a certain power of $Q(S)$, up to a dimensional constant factor. It is enough to prove 
\begin{equation}\label{e:Thm2-Support-S}
\| \mathcal{E}_K^* w \mathcal{E}_K \|_{\mathcal{C}^2(L^2(S))}
\lesssim 
\|X_Sw\|_{L^2(\{(u,v)\in TS:u\in K^\diamond\})},
\end{equation}
where $\mathcal{E}_Kg:=\mathcal{E}(\mathbf{1}_Kg)$ and $\mathcal{E}g=\widehat{g\mathrm{d}\sigma}$.
Arguing as in the spherical case, the inequality \eqref{e:Thm2-Support-S} is seen to be equivalent to the pointwise inequality
$$\mathrm{d}\sigma_K*\widetilde{\mathrm{d}\sigma}_K(\xi)\lesssim\int_{K^{
\diamond}}\delta(N(u)\cdot\xi)\mathrm{d}\sigma(u),$$
or equivalently,
$$\mathrm{d}\sigma_K*\widetilde{\mathrm{d}\sigma}_K\lesssim\mathcal{R}_{S,0}^*(\mathbf{1}_{K^\diamond})$$
where $\mathcal{R}_{S,0}f(u):=\mathcal{R}_0f(N(u))$ and the restricted Radon transform $\mathcal{R}_0$ is given by \eqref{resrad}.
This is a direct consequence of the following lemma applied to $g=\mathbf{1}_K$.
\begin{lemma}[Tomographic estimate: general hypersurfaces]\label{tomlemma-generalS} Suppose $n\geq 2$.
    For a suitable $g:S\rightarrow\mathbb{R}_+$ define $g^\diamond:S\rightarrow\mathbb{R}_+$ by the ``sup-autocorrelation" formula
    $$
    g^\diamond(u)=\sup_{u'}g(u')g(R_{u}u').
    $$
    Then there exists a constant $c$ depending only on $n$ such that
    \begin{equation}\label{functionaltom-genS}
    g\mathrm{d}\sigma*\widetilde{g\mathrm{d}\sigma}\leq cQ(S)^{\frac{5n-8}{2}}
    \mathcal{R}_{S,0}^*g^\diamond.
    \end{equation}    
    \end{lemma}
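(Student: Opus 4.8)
The plan is to follow the spherical argument in Lemma \ref{tomlemma} almost verbatim, tracking how each of the two changes of variables degrades when $\mathbb{S}^{n-1}$ is replaced by a graph $S$ of finite curvature quotient $Q(S)$. As in the spherical case, it suffices to prove the weak formulation
\begin{equation*}
\int_S\int_S\varphi(u'-u'')g(u')g(u'')\,\mathrm{d}\sigma(u'')\,\mathrm{d}\sigma(u')\leq cQ(S)^{\frac{5n-8}{2}}\int_S g^\diamond(u)\mathcal{R}_0\varphi(N(u))\,\mathrm{d}\sigma(u)
\end{equation*}
for all nonnegative test functions $\varphi$. First I would fix $u'$ and substitute $u''=R_uu'$, where $R_u$ is the collision map defined by \eqref{collision condition 1}--\eqref{collision condition 2}. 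The essential point is to compute (or, more realistically, to estimate from above and below) the Jacobian $\mathrm{d}\sigma(u'')/\mathrm{d}\sigma(u)$ of this substitution. On the sphere this was the clean factor $2^{n-1}|\omega\cdot\omega'|^{n-2}$, and here it becomes some expression $J_1(u,u')$ built from $N(u)\cdot$(unit normal to the chord) together with ratios of principal curvatures at $u$, $u'$, $u''$; the curvature-quotient hypothesis bounds all such ratios by $Q(S)$, which is where the first power of $Q(S)$ enters. Second, for fixed $u$ I would substitute $\xi=u'-u''=u'-R_uu'$, mapping (each sheet of) $S$ onto a neighbourhood of the origin in $N(u)^\perp$; again the Jacobian $\mathrm{d}\xi/\mathrm{d}\sigma(u')$ must be two-sided bounded in terms of $Q(S)$ and the angle factor, by analogy with the spherical formula $\mathrm{d}\xi=2^{n-1}|\omega\cdot\omega'|\,\mathrm{d}\sigma(\omega')$. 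Composing the two substitutions and using $g(u')g(R_uu')\leq g^\diamond(u)$ pointwise reduces the double integral over $\xi\in N(u)^\perp$ to $g^\diamond(u)\int\varphi(\xi)\,\mathrm{d}\xi=g^\diamond(u)\mathcal{R}_0\varphi(N(u))$, up to the accumulated Jacobian factors; integrating in $u$ then finishes the proof.

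The bookkeeping for the Jacobians is the heart of the matter, and the cleanest way to organise it is to reduce to the spherical case by an affine comparison argument: near each point $S$ can be compared, after an affine map whose distortion is controlled by $Q(S)$, with a sphere of suitable radius, so that the spherical identities from \cite{BGNO} transfer with multiplicative errors that are powers of $Q(S)$. One should keep careful track of exponents: the chord-direction/normal angle factors $|\omega\cdot\omega'|$ appeared to powers $n-2$ (first substitution) and $1$ (second substitution) on the sphere, combining to the net power $n-3$ that makes the argument work precisely for $n\geq 3$; here the graph and finite-$Q$ hypotheses are exactly what allow the marginal $n=2$ case to be handled without the antipodal restriction needed on $\mathbb{S}^1$, since the offending jacobian singularity is absorbed. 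Summing the powers of $Q(S)$ coming from the two Jacobians and from the affine comparison should yield the stated exponent $\frac{5n-8}{2}$; I would not expect the exact constant $c$ or the precise exponent to be illuminating, only that it is a fixed power of $Q(S)$ depending on $n$ alone.

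The main obstacle, then, is not conceptual but computational: verifying that the collision map $R_u$ on a convex graph with geodesically convex normal image is well defined and a diffeomorphism between the relevant sheets, and that both Jacobians admit two-sided bounds of the form (angle factor)$^{\text{power}}\times Q(S)^{\pm O(n)}$. The geodesic convexity of $N(S)$ is used precisely to guarantee that $u\mapsto R_uu'$ is surjective, mirroring the role of the no-antipodal-points hypothesis in Lemma \ref{tomlemma}. Once these geometric facts are in hand, the inequality itself drops out by the same two-substitution scheme as in the spherical proof, with $g^\diamond$ even (with respect to the relevant symmetry of $S$) replaced by the pointwise bound $g(u')g(R_uu')\leq g^\diamond(u)$ used directly.
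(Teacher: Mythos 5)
Your two-substitution scheme is exactly the paper's: reduce to testing against a nonnegative $\varphi$, substitute $u''=R_uu'$ for fixed $u'$ (Jacobian factor $J(u,u')$ as in \eqref{defJ}), then substitute $\xi=u'-R_uu'$ for fixed $u$ (Jacobian $\widetilde{J}(u,u')$), and finish with $g(u')g(R_uu')\leq g^\diamond(u)$ and $\int_{T_uS}\varphi(\xi)\,\mathrm{d}\xi=\mathcal{R}_0\varphi(N(u))$. So the architecture is right. The gap is at the one step that actually carries the content of the lemma: the quantitative bound $J(u,u')/\widetilde{J}(u,u')\lesssim Q(S)^{\frac{5n-8}{2}}$. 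The paper does not rederive this; it cites it from \cite[Section 4]{BGNO}, where it is established by a careful analysis of the collision map. Your proposal replaces it with an unexecuted plan (``affine comparison with a sphere, distortion controlled by $Q(S)$''), and you explicitly say you would not expect to recover the precise exponent. Since the statement of Lemma \ref{tomlemma-generalS} asserts the specific power $\frac{5n-8}{2}$, an argument that only yields ``some power of $Q(S)$'' does not prove it; and even that weaker conclusion is not obviously within reach of a local osculating-sphere comparison, because $R_u$ is a nonlocal map (it couples points $u,u',u''$ that may be far apart via the conditions \eqref{collision condition 1}--\eqref{collision condition 2}), so controlling the Jacobians of $u\mapsto R_uu'$ and $u'\mapsto u'-R_uu'$ requires global information about $N$, not just pointwise comparability of second fundamental forms.

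Two smaller points. First, your claim that in the first substitution the spherical factor $2^{n-1}|\omega\cdot\omega'|^{n-2}$ becomes ``ratios of principal curvatures at $u,u',u''$'' is roughly the right intuition, but the clean way to set it up (as the paper does) is to define $J$ abstractly by \eqref{defJ} and only ever estimate the \emph{ratio} $J/\widetilde{J}$; in dimension $n=2$ the paper even records the exact formula \eqref{Lambda} for this ratio, which shows it is not purely a curvature-quotient quantity. Second, your remark that the finite-$Q$ graph hypothesis ``absorbs the jacobian singularity'' that forced the antipodal restriction on $\mathbb{S}^1$ is a reasonable heuristic, but it is again a consequence of the cited bound on $J/\widetilde{J}$, not something your sketch establishes independently. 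In short: same route as the paper, but the decisive Jacobian-ratio estimate is neither proved nor invoked, so as written the argument is incomplete.
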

    \begin{proof} It suffices to prove that
      \begin{equation}\label{weak2}
      \int_{S}\int_{S}\varphi(u'-u'')g(u')g(u'')\mathrm{d}\sigma(u')\mathrm{d}\sigma(u'')\leq cQ(S)^{\frac{5n-8}{2}}\int_{S}g^\diamond(u)\mathcal{R}_0\varphi(N(u))\mathrm{d}\sigma(u)
      \end{equation}
      for all nonegative test functions $\varphi$ on $\mathbb{R}^n$. Let $J(u,u')$ be the reciprocal of the Jacobian of the mapping $u\mapsto R_uu'$, so that 
\begin{equation}\label{defJ}
\int_S\Phi(R_uu')J(u,u')\mathrm{d}\sigma(u)=\int_{S}\Phi \mathrm{d}\sigma
\end{equation}
for each $u'\in S$. For each $u'$ we make the change of variables $u''=R_u u'$, which gives
      \begin{eqnarray*}
          \begin{aligned}
      \int_{S}\int_{S}\varphi(u'-u'')g(u')g(u'')\mathrm{d}\sigma(u')\mathrm{d}\sigma(u'')=\int_{S}\int_{S}\varphi(u'-R_u u')g(u')g(R_u u')J(u,u')\mathrm{d}\sigma(u')\mathrm{d}\sigma(u).
      \end{aligned}
      \end{eqnarray*}
      Next, for fixed $u$, let $\xi=u'-R_u u'$ and let $\widetilde{J}(u,u')$ be the Jacobian of the map $u'\mapsto\xi$. In \cite[Section 4]{BGNO} it is proved that
      $$\frac{J(u,u')}{\widetilde{J}(u,u')}\lesssim  Q(S)^{\frac{5n-8}{2}}$$ with a suitable implicit constant, and so
      \begin{eqnarray*}
          \begin{aligned}
      \int_{S}\varphi(u'-R_u u')g(u')g(R_u u')J(u,u')\mathrm{d}\sigma(u')&=\int_{T_uS}\varphi(\xi)g(u'(\xi))g(R_u u'(\xi))\frac{J(u,u')}{\widetilde{J}(u,u')}\mathrm{d}\xi\\
      &\leq cQ(S)^{\frac{5n-8}{2}}g^\diamond(u)\mathcal{R}_0\varphi(N(u)),
      \end{aligned}
      \end{eqnarray*}
      as required.
    \end{proof}
    \begin{remark}
        As will be expected one may extend Remark \ref{Rem:bil} to this setting as follows: if for nonnegative functions $g_1,g_2$ on $S$ we define $g_1\diamond g_2$ on $S$ by the ``sup-cross-correlation" formula
        $$
        g_1\diamond g_2(u)=\sup_{u'\in S}g_1(u')g_2(R_uu'),
        $$
        then $$g_1\mathrm{d}\sigma*\widetilde{g_2\mathrm{d}\sigma}\leq cQ(S)^{\frac{5n-8}{2}}\mathcal{R}_{S,0}^*(g_1\diamond g_2)$$
        where $\mathcal{R}_{S,0}$ is the Radon transform restricted to hyperplanes through the origin, pulled back by the gauss map of $S$. It is interesting to contrast this with the phase-space representation \eqref{genphasesp} that is central to our direct approach in the next subsection.
    \end{remark}
    \begin{remark}\label{muchbetter}
        There is no suggestion with Lemma \ref{tomlemma-generalS} or Theorem \ref{mainn-genS} that the finiteness of $Q(S)$ is necessary for the claimed inequalities to hold. For example, when $n=2$ better estimates are available thanks to the explicit formula \eqref{Lambda}. In that case we have
        $$
        g_1\mathrm{d}\sigma*\widetilde{g_2\mathrm{d}\sigma}\leq\Lambda(S)^2\mathcal{R}_{S,0}^*(g_1\diamond g_2),
        $$
        where 
        $$
        \Lambda(S):=\sup_{u, u'\in S}\left(\frac{|u''-u'|K(u)}{|N(u')\wedge N(u'')|}\right)^{1/2}.
        $$
        For example, $\Lambda(S)<\infty$ when $S=\{(t,t^4): t\in [-1,1]\}$; see \cite{BGNO}, where the quantity $\Lambda(S)$ also arises naturally.
    \end{remark}
    \subsection{The direct approach via \texorpdfstring{$S-$c} carried Wigner distributions}
    The direct approach used in Section \ref{Subsect:wigps} applies at this level of generality thanks to the phase-space representation
    \begin{equation}\label{genphasesp}
    |\widehat{g\mathrm{d}\sigma}|^2=X_S^*W_S(g,g)
    \end{equation}
    established in \cite{BGNO}. Here
    $$
    W_S(g_1,g_2)(u,v):=\int_S g_1(u')\overline{g_2(R_uu')}e^{-2\pi iv\cdot(u'-R_uu')}J(u,u')\mathrm{d}\sigma(u'),
    $$
    with $J$ defined by \eqref{defJ}, is what we refer to as the $S$-carried Wigner transform; see \cite{Al} for the origins of this in optics.
As we have seen in previous sections, we will have the inequality
$$
\sum_j\left(\int_{\mathbb{R}^n}|\widehat{g_j\mathrm{d}\sigma}|^2w\right)^2\lesssim\|X_Sw\|_{L^2(\{(u,v)\in TS:u\in K^*\})}^2
$$
for all (signed) weight functions $w$
provided $(g_j)$ is a sequence of functions on $S$ for which
\begin{equation}\label{need}
L(j,k):=\langle W_S(g_j,g_j),W_S(g_k,g_k)\rangle
\end{equation}
is the kernel of an $\ell^2$-bounded operator. Here $$K^*=\bigcup_j\supp(g_j)^\diamond\subseteq K^\diamond.$$
In order to be able to clarify this almost orthonormality hypothesis we will need a suitable Moyal-type identity for the $S$-carried Wigner distribution. Arguing as in the spherical case one quickly finds that
$$
\langle W_S(f,f), W_S(g,g)\rangle=\int_Sf(u')\overline{g(u')}\left(\int_S \overline{f(R_uu')}g(R_uu')\frac{J(u,u')}{\widetilde{J}(u,u')}J(u,u')\mathrm{d}\sigma(u)\right)\mathrm{d}\sigma(u'),
$$
so that if we let $u''=R_uu'$ and $$M(u',u'')=\frac{J(u,u')}{\widetilde{J}(u,u')},$$
then
\begin{equation}\label{Moyal-gen}
\langle W_S(f,f), W_S(g,g)\rangle=\int_Sf(u')\overline{g(u')}\left(\int_S \overline{f(u'')}g(u'')M(u',u'')\mathrm{d}\sigma(u'')\right)\mathrm{d}\sigma(u');
\end{equation}
here $\widetilde{J}$ is defined as in the proof of Lemma \ref{tomlemma-generalS}.
In defining $M$ in this way we are implicitly writing $u$ in terms of $u'$ and $u''$, which we may do by the assumed geodesic convexity of $N(S)$.
There is a simple formula for $M$ when $n=2$ at least. In this case elementary trigonometry reveals that
\begin{eqnarray}\label{jtilde}
\begin{aligned}
\widetilde{J}(u,u')&=N(u)\cdot N(u')+N(u)\cdot N(u'')\frac{|N(u)\wedge N(u')|}{|N(u)\wedge N(u'')|}\\
&=\frac{N(u)\cdot N(u')|N(u)\wedge N(u'')|+N(u)\cdot N(u'')|N(u)\wedge N(u')|}{|N(u)\wedge N(u'')|}\\
&=\frac{|N(u')\wedge N(u'')|}{|N(u)\wedge N(u'')|}.
\end{aligned}
\end{eqnarray}
From \cite{BGNO} we know that
$$
J(u,u')=\frac{|u''-u'|}{|N(u)\wedge N(u'')|}K(u),
$$
where $K(u)$ denotes the gaussian curvature of $S$ at the point $u$, and so
\begin{equation}\label{Lambda}
M(u',u'')=\frac{|u''-u'|K(u)}{|N(u')\wedge N(u'')|}.
\end{equation}
A review of our argument reveals the more general Moyal identity
\begin{equation*}\label{Moyal-genn=2'}
\langle W_S(f_1,f_2), W_S(g_1,g_2)\rangle=\int_Sf_1(u')\overline{g_1(u')}\left(\int_S \overline{f_2(u'')}g_2(u'')\frac{|u''-u'|K(u)}{|N(u')\wedge N(u'')|}\mathrm{d}\sigma(u'')\right)\mathrm{d}\sigma(u').
\end{equation*}
It seems likely that an explicit formula for $M$ may be derived in terms of $N$ and its derivatives in all dimensions using ideas from \cite{BGNO}, where such a formula is derived for $J$.
\section{The paraboloid: orthonormal weighted Strichartz estimates}\label{Sect:par}
The main purpose of this section is to prove Theorem \ref{main}. 
As we mention in the introduction, the Wigner distribution approach appears to be more effective than the Schatten space approach in this setting. We begin with the latter.
\subsection{The Schatten approach}
Here we establish a weakened form of Theorem \ref{main} where $M$ is replaced with the larger $\mathbb{R}^d\times K^\diamond$, where $K^\diamond=\frac{1}{2}(K+K)$ and $$K=\bigcup_j\supp(\widehat{u}_{j,0}).$$ 
This argument is simpler than its spherical counterpart in Section \ref{Sect:sp} as the analogue of Lemma \ref{tomlemma} turns out to be essentially tautological.

For $f\in L^2(\mathbb{R}^d)$ let
$$S_Kf(x,t)=\int_{\mathbb{R}^d}e^{-2\pi i(x\cdot\xi+t|\xi|^2)}\mathbf{1}_K(\xi)\widehat{f}(\xi)\mathrm{d}\xi$$ be the solution to the free Schr\"odinger equation \eqref{schrod} with initial data $P_Kf$, the frequency projection of $f$ onto $K$. Evidently $u_{j}=S_Ku_{j,0}$ for each $j$ by hypothesis.
It suffices to show that
\begin{equation}\label{shpar}
\|S_K^*wS_K\|_{\mathcal{C}^2}\leq \|\rho^*w\|_{L^2(\mathbb{R}^d\times K^\diamond)}.
\end{equation}
By Plancherel's theorem in the first variable it is quickly verified that
$$
\|\rho^*w\|_{L^2(\mathbb{R}^d\times K^\diamond)}^2=\int_{\mathbb{R}}\int_{\mathbb{R}^d}|\widehat{w}(\xi, \tau)|^2\mathfrak{R}_0^*\mathbf{1}_{K^\diamond}(\xi,\tau)\mathrm{d}\xi\mathrm{d}\tau
$$
where $$\mathfrak{R}_0h(v):=\int_{\mathbb{R}^{d+1}}h(\xi,\tau)\delta(\tau-2v\cdot\xi)\mathrm{d}\xi\mathrm{d}\tau$$
is a certain (parametrised) space-time Radon transform restricted to space-time hyperplanes passing through the origin. We use this notation to emphasise the similarity with the spherical case in Section \ref{Sect:sp}. It remains to observe that
$$
\|S_K^*wS_K\|_{\mathcal{C}^2}^2=\int_{\mathbb{R}}\int_{\frac{1}{2}(K-K)}|\widehat{w}(\xi, \tau)|^2\mathfrak{R}_0^*\mathbf{1}_{K^\diamond}(\xi,\tau)\mathrm{d}\xi\mathrm{d}\tau,
$$
which follows from the identity
$$
\|S_K^*wS_K\|_{\mathcal{C}^2}^2=\int_K\int_K|\widehat{w}(\xi-\eta,|\xi|^2-|\eta|^2)|^2\mathrm{d}\xi\mathrm{d}\eta
$$
by little more than a change of variables. It might be interesting to try to adapt this approach to yield a Fourier-invariant statement similar to that of Theorem \ref{main}.
\begin{remark}\label{Remcopospara}
As the above argument reveals, in order to establish \eqref{onmtconj} as stated for $p=2$, it would suffice to show that
$$
\int_{\mathbb{R}}\int_{\frac{1}{2}(K-K)}|\widehat{w}(\xi, \tau)|^2\mathfrak{R}_0^*\mathbf{1}_{K^\diamond}(\xi,\tau)\mathrm{d}\xi\mathrm{d}\tau\lesssim \int_{\mathbb{R}}\int_{\mathbb{R}^d}|\widehat{w}(\xi, \tau)|^2\mathfrak{R}_0^*\mathbf{1}_{K}(\xi,\tau)\mathrm{d}\xi\mathrm{d}\tau
$$
for all $K$ and all \textit{nonnegative} $w$. This is a co-positivity statement in the sense of Remark \ref{Remcopos}.
\end{remark}
\subsection{The Wigner approach} As we shall see, the appropriate Wigner distribution in this setting is the classical one, whose behaviour on $L^2$ is particularly straightforward. This simplifies our analysis considerably. 
In order to prove Theorem \ref{main}, by duality it suffices to show that
\begin{equation}\label{dualonmt}
\int_{\mathbb{R}^{d+1}}\sum_j\lambda_j|u_j|^2w\mathrm{d}x\mathrm{d}t\leq\|(\lambda_j)\|_{\ell^2}\|\rho^*w\|_{L^2(M)}
\end{equation}
for all sequences $(\lambda_j)\in\ell^2$. We begin by applying the classical observation (originating in \cite{Wigner}; see \cite{BGNO} for clarification) that for any solution $u$ of the Schr\"odinger equation \eqref{schrod}, 
$$
|u|^2=\rho(W(u_0,u_0)),
$$
where $W(u_0,u_0)$ is the Wigner distribution \eqref{eucwig} of $u_0$, and $\rho$ is the ``velocity averaging'' operator
$$
\rho(f)(x,t)=\int_{\mathbb{R}^d}f(x+tv,v)\mathrm{d}v.
$$
This is a simple consequence of the basic fact that the Wigner distribution of a solution to a free Schr\"odinger equation satisfies a free kinetic transport equation.
By linearity this leads to the phase-space representation
\begin{equation}\label{thanks to Eugene P Wigner}
\sum_j\lambda_j|u_j|^2=\rho\Bigl(\sum_j\lambda_jW(u_{j,0},u_{j,0})\Bigr),
\end{equation} which we use to write
\begin{equation}\label{ps}
\int_{\mathbb{R}^{d+1}}\sum_j\lambda_j|u_j|^2w\mathrm{d}x\mathrm{d}t=\int_{\mathbb{R}^{2d}}\sum_j \lambda_jW(u_{j,0},u_{j,0})(x,v)\rho^*w(x,v)\mathrm{d}x\mathrm{d}v.
\end{equation}
Using the Fourier-invariance property
$$
W(u_0,u_0)(x,v)=W(\widehat{u}_0,\widehat{u}_0)(v,x)=\int_{\mathbb{R}^d}\widehat{u}_0\left(v+\frac{y}{2}\right)\overline{\widehat{u}_0\left(v-\frac{y}{2}\right)}e^{-2\pi ix\cdot y}\mathrm{d}y,
$$
it follows that the support of $W(u_{j,0},u_{j,0})$ is contained in $\supp(u_{j,0})^\diamond\times\supp(\widehat{u}_{j,0})^\diamond$ for each $j$, and so
it remains to apply the Cauchy--Schwarz inequality in \eqref{ps} and use the orthonormality of $(u_{j,0})$ via the  classical Moyal identity
\begin{equation}\label{classicalmoyal}
\langle W(f_1,f_2),W(g_1,g_2)\rangle=\langle f_1,g_1\rangle\overline{\langle f_2,g_2\rangle};
\end{equation}
the point being that $(W(u_{j,0},u_{j,0}))$ is orthonormal whenever $(u_{j,0})$ is; see for example \cite{Folland, Gosson}. This completes the proof of Theorem \ref{main}.
\begin{remark}[Almost orthonormal Strichartz estimates]\label{Rem:ao}
It is evident that the orthonormality of the sequence $(u_{j,0})$ may be relaxed considerably in Theorem \ref{main}. All that our proof requires is that 
$$
K(j,k):=|\langle u_{j,0},u_{k,0}\rangle|^2
$$
is the kernel of an $\ell^2$ bounded operator. For example, by Schur's test it suffices that
$$
\sup_k\sum_j|\langle u_{j,0},u_{k,0}\rangle|^2<\infty.
$$
\end{remark}


\section{Observations for \texorpdfstring{$p\not=2$}{p!=2}}\label{Sect:remarks}
So far essentially all of our results have concerned the case $p=2$ of \eqref{p} and its variants for submanifolds other that the sphere. In this section we collect together a number of observations pertaining to $p\not=2$. These naturally fall into three categories: $1\leq p<2$, $p>2$ and $p<1$.
\subsection{Interpolation for \texorpdfstring{$1\leq p\leq 2$}{1<= p<= 2}}\label{gapfill} 
Given that we have been able to establish \eqref{p} and its variants in some form for both $p=1$ and $p=2$, it is natural to expect to be able to effectively interpolate in order to reach $1<p<2$. While we have been unable to do this, we are able to establish some natural alternative (Sobolev) interpolants of the $p=1$ and $p=2$ cases of \eqref{p}. The purpose of this brief section is to clarify this in the particular setting of the sphere, although we expect similar arguments to apply more generally.
Here we consider the weaker undirected form \eqref{undirp} of \eqref{p}, where the restriction of the X-ray transform to $K$ is dropped from the right-hand side. 
\begin{proposition}\label{interpolants}
    If $1\leq p\leq 2$ then
    \begin{equation}\label{pp}
\sum_j\left|\int_{\mathbb{R}^n}|\widehat{g_j\mathrm{d}\sigma}|^2w\right|^p\lesssim\|(-\Delta)^{-\frac{1}{2p'}}w\|_p^p.
\end{equation}
\end{proposition}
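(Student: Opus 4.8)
I would prove Proposition~\ref{interpolants} by complex interpolation between the endpoints $p=1$ and $p=2$. Fix the orthonormal sequence $(g_j)$ in $L^2(\mathbb{S}^{n-1})$, put $K=\bigcup_j\supp(g_j)$, and introduce the \emph{linear} map
$$
T\colon w\longmapsto \Bigl(\int_{\mathbb{R}^n}|\widehat{g_j\mathrm{d}\sigma}|^2\,w\Bigr)_j .
$$
Writing $\dot W^{-1/p',p}(\mathbb{R}^n):=(-\Delta)^{-1/(2p')}L^p(\mathbb{R}^n)$, the inequality \eqref{pp} is exactly the boundedness of $T$ from $\dot W^{-1/p',p}(\mathbb{R}^n)$ to $\ell^p$ for $1\le p\le 2$. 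In line with the footnote in the introduction I would take $w$ in the Schwartz class throughout and pass to general $w$ by density at the end; note that $(-\Delta)^{-1/(2p')}w$ then lies in $L^p$ by a Hardy--Littlewood--Sobolev bound, since $\tfrac{1}{2p'}<\tfrac12<\tfrac n2$.

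Next I would record the two endpoints. For $p=1$, Bessel's inequality in the form \eqref{preFS} gives the pointwise bound $\sum_j|\widehat{g_j\mathrm{d}\sigma}(x)|^2\le|K|\le\sigma(\mathbb{S}^{n-1})$, whence $\|Tw\|_{\ell^1}\le\int_{\mathbb{R}^n}\bigl(\sum_j|\widehat{g_j\mathrm{d}\sigma}|^2\bigr)|w|\lesssim\|w\|_{L^1}$; that is, $T\colon L^1\to\ell^1$, with $L^1=\dot W^{0,1}$. For $p=2$, Theorem~\ref{mainsphereintro}, the monotonicity of the $L^2$ norm under enlarging the base set, and the identity $\|Xw\|_{L^2}=c_n\|(-\Delta)^{-1/4}w\|_{L^2}$ give
$$
\|Tw\|_{\ell^2}\le\|Xw\|_{L^2(\{(\omega,v):\,\omega\in K^\diamond,\ v\in\langle\omega\rangle^\perp\})}\le\|Xw\|_{L^2(\{(\omega,v):\,\omega\in\mathbb{S}^{n-1},\ v\in\langle\omega\rangle^\perp\})}=c_n\|(-\Delta)^{-1/4}w\|_{L^2},
$$
i.e. $T\colon\dot H^{-1/2}\to\ell^2$ with $\dot H^{-1/2}=\dot W^{-1/2,2}$ (for complex $w$ one splits into real and imaginary parts first).

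The core step is then to interpolate $T$ between these two bounds. Since $[\ell^1,\ell^2]_\theta=\ell^p$ and $[L^1,\dot H^{-1/2}]_\theta=\dot W^{-1/p',p}$ precisely when $\tfrac1p=1-\tfrac\theta2$ (equivalently $\theta=\tfrac2{p'}$), complex interpolation of the linear operator $T$ yields $T\colon\dot W^{-1/p',p}\to\ell^p$ for $1<p<2$, and together with the two endpoints this is \eqref{pp}. To make the second interpolation identity self-contained I would realise it via Stein's interpolation theorem for the analytic family
$$
S_zh:=\Bigl(\int_{\mathbb{R}^n}|\widehat{g_j\mathrm{d}\sigma}|^2\,(-\Delta)^{z}h\Bigr)_j,\qquad 0\le\mathrm{Re}\,z\le\tfrac14,
$$
which is analytic in $z$ for Schwartz $h$: on $\mathrm{Re}\,z=\tfrac14$ the $p=2$ bound gives $\|S_{1/4+it}h\|_{\ell^2}\le c_n\|(-\Delta)^{it}h\|_{L^2}=c_n\|h\|_{L^2}$ (as $(-\Delta)^{it}$ is $L^2$-unitary), while on $\mathrm{Re}\,z=0$ the $p=1$ bound gives $\|S_{it}h\|_{\ell^1}\le|K|\,\|(-\Delta)^{it}h\|_{L^1}\le|K|\,\|(-\Delta)^{it}h\|_{H^1}\lesssim(1+|t|)^{C}\|h\|_{H^1}$, using that imaginary powers of the Laplacian act boundedly, with polynomially bounded norm, on the real Hardy space $H^1\hookrightarrow L^1$. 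Since $[H^1,L^2]_\theta=L^p$ and $[\ell^1,\ell^2]_\theta=\ell^p$ with $\tfrac1p=1-\tfrac\theta2$, evaluating at $z=\tfrac1{2p'}$ (so $\theta=\tfrac2{p'}$) and setting $w=(-\Delta)^{1/(2p')}h$ gives $\|Tw\|_{\ell^p}\lesssim\|h\|_{L^p}=\|(-\Delta)^{-1/(2p')}w\|_{L^p}$ for $1<p<2$.

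The one genuinely delicate point --- and the reason for routing through $H^1$ rather than $L^1$ above --- is that the analytic family unavoidably carries imaginary powers $(-\Delta)^{it}$, which are \emph{unbounded} on $L^1$, so one cannot simply interpolate $T$ using $L^1$ as an endpoint. Replacing $L^1$ by the Hardy space $H^1$ (equivalently, appealing to Calder\'on's theorem on complex interpolation of Bessel/Riesz potential spaces) repairs this while still delivering genuine $L^p$-Sobolev --- rather than only Besov --- intermediate spaces, which is what the statement demands; and since the $p=1$ case of \eqref{pp} is proved directly above, only the range $1<p<2$ relies on this interpolation. The remaining points (analyticity and admissible growth of $z\mapsto S_z$, and the density argument) are routine. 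I do not see how to reach the same conclusion by a purely scalar H\"older interpolation of the two $j$-summed bounds, since that only produces the weaker right-hand side $\|w\|_{L^1}^{2-p}\|w\|_{\dot H^{-1/2}}^{2(p-1)}$, which dominates $\|(-\Delta)^{-1/(2p')}w\|_{L^p}^p$; capturing the sharper Sobolev norm genuinely requires interpolating the operator.
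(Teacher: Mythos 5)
Your proposal is correct and follows essentially the same route as the paper: both treat $p=1$ directly via the Bessel bound \eqref{preFS}, use Theorem \ref{mainsphereintro} together with $\|Xw\|_2=c_n\|(-\Delta)^{-1/4}w\|_2$ at $p=2$, and then run Stein/Fefferman--Stein analytic interpolation on the family $(-\Delta)^z$ with the $H^1$ endpoint handling the imaginary powers. The only cosmetic difference is that the paper damps the family by the factor $(z+1)^{-n-1}$ to make the boundary bounds uniform in $t$, whereas you keep the polynomial growth and invoke the admissible-growth form of the interpolation theorem.
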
 
We clarify that signed (and thus complex-valued) weights $w$ are permitted in the statement of Proposition \ref{interpolants}.
Since the inequality
\begin{equation}\label{HLSXaa} 
    \|Xf\|_p\lesssim\|(-\Delta)^{-\frac{1}{2p'}}f\|_p
\end{equation}
holds whenever $1\leq p\leq 2$ (see \cite{Strich}), the inequality \eqref{pp} is weaker than the intended \eqref{undirp}, at least for non-negative $w$. As may be expected from wave-packet heuristics, which identify $\|Xw\|_p^p$ as a certain proxy for the left-hand side of \eqref{pp}, Proposition \ref{interpolants} may be established by an interpolation argument in much the same way as \eqref{HLSXaa}.
\begin{proof}
Proposition \ref{interpolants} will follow by a straightforward application of Theorem \ref{mainsphereintro} and the Fefferman--Stein interpolation theorem for analytic families of operators. To this end let us fix an orthonormal sequence $(g_j)$, and for $z\not=-1$ let 
$$
T_zw(j)=(z+1)^{-n-1}\int_{\mathbb{R}^n}|\widehat{g_j\mathrm{d}\sigma}(x)|^2(-\Delta)^zw(x)\mathrm{d}x.
$$
Recalling the pointwise inequality \eqref{preFS}, it follows from well-known Hardy space bounds (see for example \cite{FSt}) on the Marcinkiewicz operator $(-\Delta)^{it}$ that
\begin{equation}\label{p1}
\|T_{it}w\|_{\ell^1}\lesssim (1+|t|)^{-n-1}\|(-\Delta)^{it}w\|_1\lesssim\|w\|_{H^1}
\end{equation}
uniformly in $t\in\mathbb{R}$.
Further, by Theorem \ref{mainsphereintro} applied to the (complex-valued) weight $(-\Delta)^{it}w$,
\begin{equation}\label{p2}
\|T_{\frac{1}{4}+it}w\|_{\ell^2}\lesssim\|(-\Delta)^{it}w\|_2=\|w\|_2
\end{equation}
for all $t$, and so an application of the Fefferman--Stein analytic interpolation theorem \cite{FSt} yields
\begin{equation}\label{canget}\left\|\left(\int_{\mathbb{R}^n}|\widehat{g_j\mathrm{d}\sigma}|^2(-\Delta)^{\frac{1}{2p'}} w\right)\right\|_{\ell^p}=\|T_{\frac{1}{2p'}}w\|_{\ell^p}\lesssim\|w\|_p,
\end{equation}
or equivalently \eqref{pp}
for all $1< p\leq 2$. The case $p=1$ is an immediate consequence of \eqref{preFS}. 
\end{proof}
\subsection{Higher \texorpdfstring{$p$}{p} and the co-positivity of tensor forms}\label{pten}
While we do not provide any results towards \eqref{p} and its variants for $p>2$, the Schatten space approach to \eqref{p} from Section \ref{Sect:sp} may be readily developed in the case that $p$ is an even integer. This allows \eqref{p} and its variants to be reduced to the control of one $p$-tensor form by another, or more accurately, the co-positivity (or positivity if we hope to allow signed $w$) of a $p$-tensor form; this is already evident in Sections \ref{Sect:sp}--\ref{Sect:par} in the relatively simple case $p=2$ -- see Remarks \ref{boch}--\ref{impst}. In this section we expose some of the higher tensor forms that emerge in the context of the sphere, although similar reasoning may be applied in other contexts.

As in Section \ref{Sect:sp}, we observe that the estimate \eqref{p}, or rather, for the sake of simplicity, its undirected form \eqref{undirp},
may be equivalently expressed as
   \begin{equation} \label{punshat}
\| \mathcal{E}^* w \mathcal{E} \|_{\mathcal{C}^p}^p \lesssim \|Xw\|_{L^p}^p,
\end{equation} 
where $\mathcal{E}g:=\widehat{g\mathrm{d}\sigma}$. Here $w$ is a real-valued weight so that $\overline{\widehat{w}(\xi)}=\widehat{w}(-\xi)$.
By way of an example, when $p=4$ routine calculations reveal that
\begin{eqnarray*}\label{shat4}
\begin{aligned}
\| \mathcal{E}^* &w \mathcal{E} \|_{\mathcal{C}^4}^4
\\&=\int_{(\mathbb{S}^{n-1})^2}|\ker((\mathcal{E}^*w\mathcal{E})^2)(\omega,\omega')|^2\mathrm{d}\sigma(\omega)\mathrm{d}\sigma(\omega')\\
&=\int_{(\mathbb{S}^{n-1})^2}\Bigl|\int_{\mathbb{S}^{n-1}}\widehat{w}(\omega-\omega'')\overline{\widehat{w}(\omega''-\omega')}\mathrm{d}\sigma(\omega'')\Bigr|^2\mathrm{d}\sigma(\omega)\mathrm{d}\sigma(\omega')\\
&=\int_{(\mathbb{S}^{n-1})^4}\widehat{w}(\omega_1-\omega_2)\widehat{w}(\omega_2-\omega_3)\widehat{w}(\omega_3-\omega_4)\widehat{w}(\omega_4-\omega_1)\mathrm{d}\sigma(\omega_1)\mathrm{d}\sigma(\omega_2)\mathrm{d}\sigma(\omega_3)\mathrm{d}\sigma(\omega_4)\\& = \int_{(\mathbb{R}^n)^4} w(x_1)w(x_2)w(x_3) w(x_4)  
   \widehat{\mathrm{d}\sigma}(x_1-x_2)\widehat{\mathrm{d}\sigma}(x_2-x_3)\widehat{\mathrm{d}\sigma}(x_3-x_4)\widehat{\mathrm{d}\sigma}(x_4-x_1) \mathrm{d}x_1 \mathrm{d}x_2 \mathrm{d}x_3 \mathrm{d}x_4,
\end{aligned}
\end{eqnarray*}
and 
\begin{eqnarray}\label{X4}
\begin{aligned}
\|Xw\|_4^4
&=\int_{(\mathbb{R}^n)^2}\frac{w(x_1)w(x_2)(Xw(\ell(x_1,x_2)))^2}{|x_1-x_2|^{n-1}}\mathrm{d}x_1\mathrm{d}x_2\\
&=\int_{(\mathbb{R}^n)^4} w(x_1)w(x_2)w(x_3)w(x_4)  \frac{\delta_{\ell(x_1,x_2)}(x_3)\delta_{\ell(x_1,x_2)}(x_4)}{|x_1-x_2|^{n-1}}\mathrm{d}x_1 \mathrm{d}x_2 \mathrm{d}x_3 \mathrm{d}x_4,
\end{aligned}
\end{eqnarray}
where $\ell(x_1,x_2)$ is the line in $\mathbb{R}^n$ passing through both $x_1$ and $x_2$, and for a line $\ell$,
$$
\langle \delta_\ell,\varphi\rangle:=X\varphi(\ell).
$$
This expression for $\|Xw\|_4^4$ is a case of Drury's identity \cite{Drury}; see also \cite{Baern}.
Thus the inequality \eqref{punshat}, if true for nonnegative weights $w$, is the statement that 
\begin{equation}\label{nontrivi}
k_4(x):=\frac{\delta_{\ell(x_1,x_2)}(x_3)\delta_{\ell(x_1,x_2)}(x_4)}{|x_1-x_2|^{n-1}}-c\widehat{\mathrm{d}\sigma}(x_1-x_2)\widehat{\mathrm{d}\sigma}(x_2-x_3)\widehat{\mathrm{d}\sigma}(x_3-x_4)\widehat{\mathrm{d}\sigma}(x_4-x_1)
\end{equation}
is the kernel of a co-positive quartic form (positive on the cone of nonnegative functions) for some $c>0$. Evidently this would follow if $\widehat{k}_4$ were the kernel of a positive quartic form, as it did in the case $p=2$, although from this perspective it would seem harder to make use of a positivity assumption on $w$, should one be required.

\subsection{Reverse inequalities for \texorpdfstring{$p<1$}{p<1}}\label{Sect:rev}
In this section we make some remarks about the suggested inequality \eqref{p} and its variants in the hitherto excluded range $p<1$.
The first thing to observe is that \eqref{p} is easily seen to fail in general for $p<1$. To see this let $(C_j)$ be a (maximal) collection of disjoint $\delta$-caps on $\mathbb{S}^{n-1}$, and let $g_j=|C_j|^{-1/2}\mathbf{1}_{C_j}$. Taking $w$ to be the indicator function of a small ball centred at the origin in $\mathbb{R}^n$, we conclude from \eqref{p} and the local constancy of $|\widehat{g\mathrm{d}\sigma}|^2$ that
$$
\sum_j|C_j|^p=\sum_j|\widehat{g_j\mathrm{d}\sigma}(0)|^{2p}\sim\sum_j\Bigl(\int_{\mathbb{R}^n}|\widehat{g_j\mathrm{d}\sigma}|^2w\Bigr)^p\lesssim\|Xw\|_p^p\sim 1.
$$
Evidently this is absurd for $\delta$ sufficiently small if $p<1$.

It is conceivable that \eqref{p} holds \textit{in reverse} for $p<1$ when $(g_j)$ is \textit{complete} in $L^2(K)$ -- a suggestion supported by wavepacket heuristics, along with the observation that \eqref{p} is an identity when $p=1$ for complete $(g_j)$. The latter is immediate from Parseval's identity and Fubini's theorem since
\begin{equation}\label{pid}
\sum_j\int_{\mathbb{R}^n}|\widehat{g_j\mathrm{d}\sigma}|^2w=\int_{\mathbb{R}^n}\sum_j|\langle g_j,e_x\rangle_{L^2(K)}|^2w(x)\mathrm{d}x=|K|\int_{\mathbb{R}^n}w=\|Xw\|_{L^1(\omega\in K)},
\end{equation}
where $e_x(\xi):=e^{2\pi i x\cdot\xi}$ with $\xi\in\mathbb{S}^{n-1}$.
As further supporting evidence, the limiting case of \eqref{p} as $p\rightarrow 0$ is quickly seen to hold in reverse. To see this observe that the left-hand side becomes 
\begin{equation}\label{p0}
\#\left\{j:\int_{\mathbb{R}^n}|\langle g_j,e_x\rangle|^2w(x)\mathrm{d}x\not=0\right\},
\end{equation}
and so it suffices to show that this is finite only if $w=0$. If the expression \eqref{p0} is finite then for each $j$ in the complement of some finite set $J\subset\mathbb{Z}$ there is a null set $E_j\subset\mathbb{R}^n$ such that $\langle g_j,e_x\rangle=0$ for all $x\in \supp(w)\backslash E_j$, meaning that
$$
\{e_x:x\in\supp(w)\backslash E\}\subseteq\sp\{g_j:j\in J\},
$$
by the completeness of $(g_j)$.
This gives a contradiction if $\supp(w)$ is assumed to have positive measure since $E:=\cup_j E_j$ is null. 

Further modest evidence for such reverse bounds for complete orthonormal sequences may be found in the very special case where the underlying sphere is replaced with a hyperplane. To see this it is helpful to first revisit \eqref{onmtgen} and ask a similar question: for $0<p\leq 1$, under what conditions on a sequence of functions $(g_j)$ on a submanifold $S$ is it true that
\begin{equation}\label{onmtgenrev}
\sum_j\left(\int_{\mathbb{R}^n}|\widehat{g_j\mathrm{d}\sigma}|^2w\right)^p\gtrsim\|X_Sw\|_{L^p(\{(u,v)\in TS:u\in E\})}^p
\end{equation}
for some geometrically-meaningful set $E\subseteq S$ containing $$K=\bigcup_j\supp(g_j)?$$ For a hyperplane $S=\mathbb{R}^{n-1}\times\{0\}$ the inequality \eqref{onmtgenrev} is easily seen to hold for all complete orthonormal $(g_j)$ with $E=K$ and implicit constant $1$. Arguing as in the analogous forward claim (see Section \ref{Sect:gen}), matters are reduced to showing that
\begin{equation}\label{goodmorning}
\sum_j\left(\int_{\mathbb{R}^{n-1}}|\widehat{g}_j(v)|^2Xw(e_n,v)\mathrm{d}v\right)^p\geq |K|\|Xw(e_n,\cdot)\|_{L^p(\mathbb{R}^{n-1})}^p
\end{equation}
for all complete orthonormal sequences $(g_j)$ in $L^2(K)$, where $K\subseteq\mathbb{R}^{n-1}$.
To see this we set $\mathfrak{w}(v)=Xw(e_n,v)$ and use the completeness property 
$$\sum_{j}|\widehat{g}_j|^2\equiv |K|$$
to write
$$
|K|\|\mathfrak{w}\|_{L^p(\mathbb{R}^{n-1})}^p=\sum_j\int_{\mathbb{R}^{n-1}}\left(\mathfrak{w}(v)|\widehat{g}_j(v)|^2\right)^p|\widehat{g}_j(v)|^{2(1-p)}\mathrm{d}v,
$$
and then apply H\"older's inequality (with exponents $1/p, 1/(1-p)$) in the integral on the right-hand side to obtain
$$
|K|\|\mathfrak{w}\|_{L^p(\mathbb{R}^{n-1})}^p\leq\sum_j\left(\int_{\mathbb{R}^{n-1}}\mathfrak{w}(v)|\widehat{g}_j(v)|^2\mathrm{d}v\right)^p\left(\int_{\mathbb{R}^{n-1}}|\widehat{g}_j(v)|^2\mathrm{d}v\right)^{1-p}.
$$
This establishes \eqref{goodmorning} by Plancherel's theorem and the fact that $\|g_j\|_2=1$.

\begin{remark}[Local estimates]
For $0<p\leq 1$ one may easily obtain the local estimate
\begin{equation}\label{loca}
\sum_j\left(\int_{\mathbb{R}^n}|\widehat{g_j\mathrm{d}\sigma}|^2w\right)^p\gtrsim R^{-(1-p)(n-1)}\|Xw\|_{L^p(\{(\omega,v): \omega\in K, |v|\leq R\})}^p
\end{equation}
by attempting to follow the argument that led to \eqref{goodmorning}. To see this let
$\Phi$ be a function on $\mathbb{R}^n$ whose (restricted) Radon Transform $\mathcal{R}_0\Phi(\omega)$ belongs to $L^{1-p}(K)$, and use 
the completeness identity 
\begin{equation}\label{idagain}
\sum_j|\widehat{g_j\mathrm{d}\sigma}|^2\equiv |K|,
\end{equation}
to observe that 
\begin{eqnarray*}
    \begin{aligned}
        \int_{K}\int_{\langle\omega\rangle^\perp} &(Xw(\omega,v))^p\Phi(v)^{1-p}\mathrm{d}v\mathrm{d}\sigma(\omega)\\
&=|K|^{-p}\int_{K}\int_{\langle\omega\rangle^\perp}\left(\int_{\mathbb{R}}w(v+t\omega)\left(\sum_j|\widehat{g_j\mathrm{d}\sigma}(v+t\omega)|^2\right)\mathrm{d}t\right)^p\Phi(v)^{1-p}\mathrm{d}v\mathrm{d}\sigma(\omega)\\
&\leq |K|^{-p}\sum_j\int_{K}\int_{\langle\omega\rangle^\perp}\left(\int_{\mathbb{R}}w(v+t\omega)|\widehat{g_j\mathrm{d}\sigma}(v+t\omega)|^2\mathrm{d}t\right)^p\Phi(v)^{1-p}\mathrm{d}v\mathrm{d}\sigma(\omega).
    \end{aligned}
\end{eqnarray*}
Applying H\"older (with exponents $1/p, 1/(1-p)$) in $v$ and Fubini's theorem, this is bounded by
\begin{eqnarray*}
    \begin{aligned}
\sum_j\int_{K}\left(\int_{\langle\omega\rangle^\perp}\int_{\mathbb{R}}w(v+t\omega)|\widehat{g_j\mathrm{d}\sigma}(v+t\omega)|^2\mathrm{d}t\mathrm{d}v\right)^p&\mathcal{R}_0\Phi(\omega)^{1-p}\mathrm{d}\sigma(\omega)\\
&=\|\mathcal{R}_0\Phi\|_{L^{1-p}(K)}^{1-p}\sum_j\left(\int_{\mathbb{R}^n}|\widehat{g_j\mathrm{d}\sigma}|^2w\right)^p,
\end{aligned}
\end{eqnarray*}
up to the factor of $|K|^{-p}$.
The estimate \eqref{loca} follows from this upon setting $\Phi=\mathbf{1}_{B(0,R)}$.
\end{remark}
\begin{remark}[Possible entropic versions]
Since \eqref{p} is an identity at $p=1$ when $(g_j)$ is complete (see \eqref{pid}), one might expect there to be an entropic version of \eqref{p} in the case $K=\mathbb{S}^{n-1}$, accessible by differentiating the $L^p$ norms on both sides with respect to $p$ at $p=1$. In this context the weight $w$ is naturally taken to be a probability density, which makes $Xw$ a probability density on phase-space, and the sequence of ``expected intensities" $I=(I_j)$ given by $$I_j:=\mathbb{E}(|\widehat{g_j\mathrm{d}\sigma}|^2)=\int_{\mathbb{R}^n}|\widehat{g_j\mathrm{d}\sigma}|^2w$$ a probability density on $\mathbb{Z}$; strictly speaking one has to normalise $\mathrm{d}\sigma$ to be a probability measure here. If \eqref{p} were to hold with constant $1$ for $1\leq p\leq 2$, as it does at both extremes when $n\geq 3$ at least (this is \eqref{pid} and Theorem \ref{mainsphereintro} respectively), or indeed in reverse with constant $1$ for $p<1$, then it would follow that
$$
h(I)\geq h(Xw),
$$
where the entropy $h(f):=-\int f\log f$ is defined with respect to the appropriate measures on each side. We refer to \cite{BT} for examples of such arguments, along with some established properties of $h(Xw)$.
\end{remark}

\begin{remark}
Curiously the special case of the forward inequality \eqref{p} involving a \textit{single} input function -- see \eqref{single} --
and without the restriction on the X-ray norm on the right-hand side, may be seen to hold for all $p>0$, at least if one assumes that $w$ satisfies a suitable local constancy condition. This follows from the recent X-ray lower bounds in \cite{BT}, where it is shown that 
\begin{equation}\label{bt}
\|Xf\|_p\geq\|f\|_q
\end{equation}
whenever $f$ is nonnegative and $0<p,q\leq 1$ satisfy 
\begin{equation}\label{scaling}
\frac{1}{n}\left(1-\frac{1}{p}\right)=\frac{1}{n-1}\left(1-\frac{1}{q}\right).
\end{equation}
As this last relation defines a bijective map $p\mapsto q$ on $(0,1]$ it suffices to show that
$$\int_{\mathbb{R}^n}|\widehat{g\mathrm{d}\sigma}|^2w\lesssim\|g\|_2^2\|w\|_q$$ for all $0<q\leq 1$. However, this follows from the trivial $q=1$ case along with the estimate $\|w\|_1\lesssim\|w\|_q$; the latter following from the local constancy of $w$.
A local constancy assumption is natural here since $|\widehat{g\mathrm{d}\sigma}|^2$ is the Fourier transform of a compactly supported measure; see \cite{CIW}.
\end{remark}

\end{document}